\documentclass[a4paper,american,reqno]{amsart}

\newif\ifPreprint \Preprintfalse
\newif\ifSubmission \Submissionfalse
\newif\ifAppendix \Appendixfalse
\newif\ifChallengeVersion \ChallengeVersiontrue
\newif\ifFullLengthVersion \FullLengthVersionfalse

\usepackage{babel}
\usepackage[utf8]{inputenc}
\usepackage[T1]{fontenc}
\usepackage{siunitx}
\usepackage{transparent}
\usepackage{booktabs}
\usepackage{multirow}
\usepackage{xspace}
\usepackage{relsize}
\usepackage{accents}
\usepackage{todonotes}
\usepackage{csvsimple}
\usepackage{csquotes}
\usepackage{microtype}
\usepackage{amsfonts}
\usepackage{pdflscape}
\usepackage{csquotes}
\usepackage{bbold}
\usepackage[ruled,linesnumbered]{algorithm2e}
\usepackage[style=numeric,
            giveninits=true,
            maxbibnames=100,
            maxcitenames=2,
            isbn=false,
            doi=true,
            url=false,
            isbn=false,
            backend=biber]{biblatex}

\usepackage[colorlinks,
            citecolor=green!50!black,
            urlcolor=blue,
            linkcolor=green!50!black]{hyperref} 

\makeatletter
\patchcmd{\@settitle}{\uppercasenonmath\@title}{\scshape\large}{}{}
\patchcmd{\@setauthors}{\MakeUppercase}{\scshape\normalsize}{}{}
\makeatother

\tolerance 1414
\hbadness 1414
\emergencystretch 1.5em
\hfuzz 0.3pt
\widowpenalty=10000
\vfuzz \hfuzz
\raggedbottom

\makeatletter
\@namedef{subjclassname@2020}{%
  \textup{2020} Mathematics Subject Classification}
\makeatother

\newcommand{\rev}[1]{#1}
\newcommand{\revtwo}[1]{#1}


\newcommand{\abbr}[1][abbrev]{#1.\xspace}

\newcommand{\eg}{\abbr[e.g]}

\newcommand{\ie}{\abbr[i.e]}

\newcommand{\wrt}{\abbr[w.r.t]}

\newcommand{\field}{\mathbb}
\newcommand{\naturals}{\field{N}}

\newcommand{\reals}{\field{R}}



\newtheorem{assumption}{Assumption}

\newtheorem{remark}{Remark}
\newtheorem{theorem}{Theorem}
\newtheorem{definition}{Definition}

\newtheorem{corollary}{Corollary}
\newtheorem{proposition}{Proposition}

\providecommand{\sfnamesize}{\relsize{0}}
\providecommand{\sfname}[1]{\textsf{\sfnamesize#1}\xspace}

\newcommand{\Gurobi}{\sfname{Gurobi}}
\newcommand{\Python}{\sfname{Python}}

\def\argmax{\mathop{\rm argmax}}

\newcommand{\bigM}{M}


\newcommand{\st}{\text{s.t.}}

\makeatletter
\newcommand{\fcdot}{\,\cdot\,}
\newcommand{\fcarg}[1]{\def\fc@rg{#1}\ifx\fc@rg\empty\fcdot\else\fc@rg\fi}
\makeatother
\newcommand{\abs}[1]{\lvert\fcarg{#1}\rvert}

\newcommand{\card}{\abs}




\newcommand{\norm}[2][]{\lVert\fcarg{#2}\rVert\ifx#1\empty\else_{#1}\fi}
\newcommand{\Norm}[2][]{\left\lVert#2\right\rVert\ifx#1\empty\else_{#1}\fi}


\newcommand{\snorm}[2][]{\lvert\!\lvert\!\lvert
  \fcarg{#1}\rvert\!\rvert\!\rvert\ifx#2\empty\else_{#1}\fi}
\newcommand{\Snorm}[2][]{\left\lvert\!\left\lvert\!\left\lvert
  #2\right\rvert\!\right\rvert\!\right\rvert\ifx#1\empty\else_{#1}\fi}
\newcommand{\sprod}[3][]{%
  \langle\fcarg{#2},\fcarg{#3}\rangle\ifx#1\empty\else_{#1}\fi}
\newcommand{\Sprod}[3][]{%
  \left\langle\fcarg{#2},\fcarg{#3}\right\rangle\ifx#1\empty\else_{#1}\fi}





\newcommand{\optmathindex}[1]{\ifx#1\empty\else,#1\fi}
\newcommand{\opttextindex}[1]{\ifx#1\empty\else,\text{#1}\fi}
\newcommand{\optmathsb}[1]{\ifx#1\empty\else_{#1}\fi}
\newcommand{\opttextsb}[1]{\ifx#1\empty\else_{\text{#1}}\fi}
\newcommand{\optmathsp}[1]{\ifx#1\empty\else^{#1}\fi}
\newcommand{\opttextsp}[1]{\ifx#1\empty\else^{\text{#1}}\fi}

\newcommand{\continuousFunctions}[1]{\mathcal{C}\ifx#1\empty\else^{#1}\fi}

\newcommand{\piecewiseContinuousFunctions}[1]{\mathcal{C}_p\ifx#1\empty\else^{#1}\fi}

\newcommand{\define}{\mathrel{{\mathop:}{=}}}

\newcommand{\myforall}[1]{\text{for all } #1}

\newcommand{\objval}{v}
\newcommand{\objfun}{f}
\newcommand{\cost}{c}
\newcommand{\decvar}{x}
\newcommand{\decvarset}{\mathcal{X}}
\newcommand{\probset}{X}
\newcommand{\proba}{q}
\newcommand{\sumproba}{\tilde{q}}
\newcommand{\scenproba}{p}
\newcommand{\scenario}{s}
\newcommand{\scenarios}{S}
\newcommand{\partition}{P}
\newcommand{\partitions}{\mathcal{P}}

\DeclareMathAlphabet{\mathbbold}{U}{bbold}{m}{n}
\newcommand*{\boldone}{\mathbbold{1}}
\newcommand{\indvar}{z}
\newcommand{\threshold}{\tau}
\newcommand{\uncrel}{\xi}
\newcommand{\uncrelset}{\Xi}

\newcommand{\LB}{\text{L}}
\newcommand{\UB}{\text{U}}
\newcommand{\ubar}[1]{\text{\b{$#1$}}}
\newcommand{\ubsol}{\bar{x}}
\newcommand{\lbsol}{\ubar{x}}
\newcommand{\iter}{j}

\newcommand{\infeasible}{\text{I}}
\newcommand{\feasible}{\text{F}}
\newcommand{\infscenarios}{\scenarios_{\infeasible}}
\newcommand{\feasscenarios}{\scenarios_{\feasible}}
\newcommand{\infpartitions}{\partitions_{\infeasible}}

\newcommand{\feaspartitions}{\partitions_{\feasible}}
\newcommand{\linkvar}{\pi}

\newcommand{\varmat}{A}
\newcommand{\constmat}{b}
\newcommand{\ineqsys}{G}
\newcommand{\entry}{i}
\newcommand{\bestbigM}{\bar{\bigM}}

\newcommand{\extrasize}{\mu}

\newcommand{\consindex}{\entry}
\newcommand{\consindexset}{I}
\newcommand{\subpart}{P}

\newcommand{\intervar}{\alpha}
\newcommand{\dualvar}{\eta}

\newcommand{\singlecost}{\rho}
\newcommand{\permutation}{\phi}
\newcommand{\quantile}{\text{Q}}
\newcommand{\permindex}{l}
\newcommand{\comptime}{\text{T}}
\newcommand{\iterset}{\text{It}}

\newcommand{\avg}{\text{a}}
\newcommand{\avtime}{\comptime_{\avg}}
\newcommand{\aviterset}{\iterset_{\avg}}
\newcommand{\avpartsize}{\card{\partition}_{\avg}}

\newcommand{\MILP}{\text{MILP}}

\newcommand{\textbigM}{\text{M}}
\newcommand{\Part}{\text{P}}
\newcommand{\random}{\text{random}}
\newcommand{\init}{\text{init}}

\newcommand{\violmerge}{\text{infeas}}
\newcommand{\final}{\text{final}}

\newcommand{\refined}{\mathcal{R}}
\newcommand{\merged}{\mathcal{M}}
\newcommand{\newsets}{N}
\newcommand{\infmerged}{\merged_{\infeasible}}
\newcommand{\infrefined}{\refined_{\infeasible}}
\newcommand{\feasmerged}{\merged_{\feasible}}
\newcommand{\feasrefined}{\refined_{\feasible}}

\newcommand{\newrefined}{R}
\newcommand{\newrefinedleft}{\newrefined_{\textup{left}}}
\newcommand{\newrefinedright}{\newrefined_{\textup{right}}}
\newcommand{\newmerged}{M}

\newcommand{\probability}{\field{P}}
\newcommand{\APM}{APM\xspace}
\newcommand{\divided}{\text{div}}
\newcommand{\projset}{E}
\newcommand{\newproba}{\kappa}

\newcommand{\feasregion}{\mathcal{F}}
\newcommand{\objregion}{\mathcal{O}}
\newcommand{\sizeparam}{\delta}
\newcommand{\subpartitions}{\tilde{\partitions}}
\newcommand{\partitionsleft}{\partitions_1}
\newcommand{\partitionsright}{\partitions_2}
\newcommand{\mergeind}{i}


\usepackage{pgfplots}
\usepackage{pgfplotstable}
\usepgfplotslibrary{groupplots} 
\usepgfplotslibrary{fillbetween}
\pgfplotsset{compat=1.16}
\pgfplotsset{grid = major, grid style={gray!30!white}}
\definecolor{darkgreen}{rgb}{0.31, 0.47, 0.26}

\tikzset{mipStyle/.style={red, mark=none, densely dashed, line width=1.5pt}}
\tikzset{randomStyle/.style={darkgreen, mark=none, dashdotted,
                             mark options = {solid}, line width=1.5pt}}
\tikzset{infeastyle/.style={brown, mark=none, dotted,
                            mark options = {solid}, line width=1.5pt}}
\tikzset{finalStyle/.style={blue, mark=none, line width=1.5pt}}

\newcommand{\plotresultsupperlower}[3]{
    \begin{tikzpicture}
        \begin{groupplot}[
            group style={
                group name=my plots,
                group size=2 by 1,
                xlabels at=edge bottom,
                ylabels at=edge left,
                horizontal sep=1.5cm
            },
            height = 5cm,
            width  = 6cm,
            xlabel = {Time (\unit{\second})},
            xmin = 0,
            xmax = #3,
            xtick = {0, #3/4, #3/2, 3*#3/4, #3},
            every y tick scale label/.style={at={(0,1)}, above left, inner sep=0pt, yshift=0.3em},
            x tick label style={/pgf/number format/precision=1},
            ]

            \nextgroupplot[title = {(a) Bounds}, ylabel = {Bound value}, font = \small, ymax = -5900, ymin = -6300,
                           legend style={at={(1.2,-0.35)},anchor=north}, legend columns=3]
            \addplot+[mipStyle, forget plot] table [x index = {0}, y index = {1}, col sep=comma]{#1};
            \addplot+[mipStyle] table [x index = {0}, y index = {2}, col sep=comma]{#1};
            \addlegendentry{\texttt{Song Big-M}}
            \addplot+[finalStyle, forget plot] table [x index = {1}, y index = {2}, col sep=comma]{#2};
            \addplot+[finalStyle] table [x index = {1}, y index = {3}, col sep=comma]{#2};
            \addlegendentry{$\Part_{\final}$}

            \nextgroupplot[title = {(b) Iterations}, ylabel = {Iterations}, ymin=0, font = \small]
            \addplot+[finalStyle] table [x index = {1}, y index = {0}, col sep=comma]{#2};
        \end{groupplot}
    \end{tikzpicture}
}

\bibliography{references}

\AddToHook{cmd/@setemails/before}{\raggedright}

\begin{document}

\title[Adaptive Partitioning for Chance-Constrained Problems]{Adaptive Partitioning for Chance-Constrained Problems with Finite Support}
\author[M. Roland, A. Forel, T. Vidal]{Marius Roland, Alexandre Forel, Thibaut Vidal}
\address[M. Roland, A. Forel, T. Vidal]{André-Aisenstadt Pavillon, 2920 Tour Road, Montreal, Quebec H3T 1N8, Canada}
\email[M. Roland]{mmmroland@gmail.com}
\email[A. Forel]{alexandre.forel@polymtl.ca}
\email[T. Vidal]{thibaut.vidal@polymtl.ca}

\date{\today}

\begin{abstract}
    This paper studies chance-constrained stochastic optimization problems with finite support. It presents an iterative method that solves reduced-size chance-constrained models obtained by partitioning the scenario set. Each reduced problem is constructed to yield a bound on the optimal value of the original problem. We show how to adapt the partitioning of the scenario set so that our adaptive method returns the optimal solution of the original chance-constrained problem in a finite number of iterations. At the heart of the method lie two fundamental operations: refinement and merging. A refinement operation divides a subset of the partition, whereas a merging operation combines a group of subsets into one. We describe how to use these operations to enhance the bound obtained in each step of the method while preserving the small size of the reduced model. Under mild conditions, we prove that, for specific refinement and merge operations, the bound obtained after solving each reduced model strictly improves throughout the iterative process. Our general method allows the seamless integration of various computational enhancements, significantly reducing the computational time required to solve the reduced chance-constrained problems. The method's efficiency is assessed through numerical experiments on chance-constrained multidimensional knapsack problems. We study the impact of our method's components and compare its performance against other methods from the recent literature.
\end{abstract}

\keywords{Chance Constraints, Adaptive Partitioning, Stochastic Optimization}
\subjclass[2020]{90C15, 90C11}


\maketitle

\section{Introduction}
\label{sec:introduction}

We consider Chance-Constrained Stochastic Programs (CCSPs). Solving a CCSP amounts to finding the optimal value of a decision variable vector $\decvar \in \decvarset \subseteq \mathbb{R}^{n}$ that minimizes an objective function $\objfun: \decvarset \rightarrow \mathbb{R}$. This decision variable has to satisfy a constraint that depends on an uncertain parameter  $\uncrel\in\uncrelset$ with a probability of $(1-\threshold)$. Here, the parameter $\tau \in [0, 1]$ represents the risk tolerance of the decision-maker. The CCSP reads
\begin{subequations}
  \label{eq:ccsp-original}
        \begin{align}
            \objval^* = \min_{\decvar \in \decvarset} \quad & \objfun(\decvar) \\
            \st \quad & \probability_\uncrel[\decvar \in \probset(\uncrel)]
              \geq 1-\threshold.
    \end{align}
\end{subequations}
CCSPs were first introduced in~\cite{charnes1958cost}. Such models are used in a variety of fields such as power systems~\cite{cho2023exact,Porras2023}, vehicle routing~\cite{errico2018vehicle}, finance~\cite{Cattaruzza2022exact}, and contextual optimization~\cite{rahimian2023data}.

More specifically, we focus on CCSPs whose uncertain parameters $\uncrel\in\uncrelset$ have finite support. The uncertain parameters belong to the set $\uncrelset \define \{\uncrel^\scenario: \scenario \in \scenarios\}$ where each $\xi^s \in \mathbb{R}^d$ is a multi-dimensional vector representing a single realization of the uncertain parameters with probability $\scenproba_\scenario$ and $\scenarios$ is a set of scenarios. When $\uncrel$ has finite support, Model~\eqref{eq:ccsp-original} can be reformulated as
\begin{subequations}
    \label{eq:cclp-generic}
        \begin{align}
            \objval^* = \min_{\decvar \in \decvarset} \quad & \objfun(\decvar) \\
            \st \quad & \sum_{\scenario \in \scenarios} \scenproba_\scenario \boldone
              \left( \decvar \in \probset^\scenario \right) \geq 1-\threshold, \label{eq:chance-cons}
    \end{align}
\end{subequations}
where $\boldone$ is the indicator function, and $\probset^\scenario = \probset (\uncrel^\scenario)$ is the set of feasible decisions for realization $\uncrel^\scenario$. Models of the type~\eqref{eq:cclp-generic} are for instance obtained when the generic CCSP~\eqref{eq:ccsp-original} is approximatied using as Sample Average Approximation (SAA). The SAA method is a widely adopted approach, particularly when the distribution of $\uncrel$ is unknown~\cite{ahmed2014solving,pagnoncelli2009sample}. This is primarily because of its simplicity and the theoretical guarantees it offers~\cite{luedtke2008sample}.

By introducing a binary variable $\indvar_\scenario\in \{0,1\} $ for each $\scenario\in\scenarios$, Model~\eqref{eq:cclp-generic} can be reformulated as
\begin{subequations}
    \label{eq:cclp-reform}
    \begin{align}
        \objval^* = \min_{\decvar \in \decvarset} \quad & \objfun(\decvar) \\
        \st \quad & \indvar_\scenario = \boldone (\decvar \in \probset^\scenario),
          \quad \scenario \in \scenarios, \label{eq:indicator-var}\\
        & \sum_{\scenario \in \scenarios} \scenproba_\scenario\indvar_\scenario \geq 1-\threshold,
          \label{eq:chance-cons2}\\
        & \indvar_\scenario \in \{0,1\}, \quad \scenario \in \scenarios \label{eq:scenario-def}
    \end{align}
  \end{subequations}

Model~\eqref{eq:cclp-reform} is regularly solved by introducing so-called \enquote{big-M} coefficients. Big-M coefficients allow to reformulate the indicator constraints~\eqref{eq:indicator-var} without introducing additional variables or constraints. Let the feasible set of a scenario $\probset^\scenario$ be fully characterized by an inequality system $\ineqsys^\scenario(\decvar) \leq 0$, where $\ineqsys^\scenario:\reals^n \rightarrow \reals^m$. Further, assume that an upper bound $\bigM^\scenario_\entry$ exists on the maximum violation of the $\entry$-th constraint of scenario $\scenario$ for $\decvar$ in the set of feasible solutions of Model~\eqref{eq:cclp-reform}. Then, we may write
\begin{subequations}
    \label{eq:cclp-reform-bigm}
        \begin{align}
        \objval^* = \min_{\decvar \in \decvarset} \quad
        & \objfun(\decvar) \\
        \st \quad
        & \ineqsys^\scenario(\decvar)  \leq
          \bigM^\scenario (1-\indvar^\scenario),
          \quad \scenario \in \scenarios, \label{eq:indicator-var-bigm}\\
        & \sum_{\scenario \in \scenarios} \scenproba_\scenario\indvar_\scenario \geq 1-\threshold,
          \label{eq:chance-cons-bigm}\\
        & \indvar_\scenario \in \{0,1\}, \quad \scenario \in \scenarios, \label{eq:scenario-def-bigm}
    \end{align}
\end{subequations}
where $\bigM^\scenario$ is a vector of entries $\bigM^\scenario_\entry$. Large values for $\bigM^\scenario_\entry$ result in a loose continuous relaxation of Model~\eqref{eq:cclp-reform}, see, \eg,~\cite{qiu2014covering}. This is one of the reasons why optimization solvers \rev{designed to take advantage of continuous relaxations} struggle to solve chance-constrained problems \rev{even when their constraints and objective function are linear}. In addition, finding tight values for $\bigM^\scenario_\entry$ can be very time-consuming.

\subsection{Contributions}
\begin{enumerate}
    \item We propose an adaptive partitioning method for solving CCSPs with finite support to optimality. The method is based on iteratively solving reduced-size CCSPs that yield lower bounds on the optimal objective of the original CCSP. The partitions are adapted so that solutions obtained in previous iterations of the method are excluded from the feasible set of the reduced problem. By construction, the adaptive partitioning method terminates after a finite number of iterations and returns the optimal solution to the original CCSP.
    \item We specify the main operations of the method that are based on mathematical arguments. Namely, we study how to refine and merge an existing partition of the scenario set so that the lower bound obtained by solving a partitioned problem is guaranteed to strictly increase. This property has significant benefits when solving partitioned problems, it implies that big-M coefficients can be reused and tightened in successive iterations. This tightens the continuous relaxation of the reduced CCSPs that are solved and reduces the time needed to solve them. It further allows us to take increasing advantage of screening \cite{Porras2023} as the number of iterations of the method increases.
    \item We discuss how to create partitions that, by construction, result in a lower bound on the original optimal objective that is tighter than the well-studied quantile bound~\cite{ahmed2017nonanticipative}. In certain cases, this initial partition results in points that are feasible and hence optimal for the original CCSP.
    \item We compare the performance of the adaptive partitioning method for solving CCSPs with binary and continuous variables with state-of-the-art methods. The results demonstrate the computational advantage of the proposed method. We highlight why the proposed method performs well numerically by examining the effect of the main operations of the method.
\end{enumerate}

\subsection{Related Literature}
This paper lies at the interface between two branches of literature. The first branch concerns solving CCSPs with finite support. Different families of valid inequalities have been proposed for solving CCSPs. The intersection between well-studied mixing inequalities~\cite{gunluk2001mixing} and CCSPs have received a lot of attention~\cite{luedtke2014branch,luedtke2010integer,abdi2016mixing,kuccukyavuz2012mixing}. Quantile cuts and their relationship with mixing inequalities~\cite{xie2018quantile} have also been studied~\cite{qiu2014covering,kilincc2021joint}. In~\cite{tanner2010iis} the authors propose to solve CCSPs using a branch-and-cut approach that exploits cuts based on irreducibly infeasible subsystems of scenarios. Other approaches that do not use valid inequalities have also been proposed. In~\cite{ahmed2017nonanticipative} the authors study solution methods based on relaxations of the original CCSP. Moreover, they propose a scenario decomposition approach tailored to CCSPs with binary decision variables. The scenario decomposition approach is designed to iteratively solve the same relaxation of the original CCSP and add \enquote{no-good} cuts to exclude previously obtained solutions from the feasible set. In their conclusion, they highlight that a promising research direction is to investigate other techniques for excluding feasible solutions. This paper is in direct connection with our work since we take a similar direction for solving CCSPs. Moreover, our approach is independent of the types of variables that are considered. Further, some approaches are concerned with developing alternative problem formulations without binary variables~\cite{song2014chance,song2013branch}. The tightening of big-$\bigM$ parameters has also received much attention~\cite{Porras2023,qiu2014covering}.

The second branch of literature related to this paper is scenario reduction for solving stochastic problems with large scenario sets. We differentiate between methods that carry out the scenario reduction separately from the optimization process and the approaches that consider both aspects simultaneously. A priori scenario reduction is performed by minimizing the Wasserstein distance~\cite{heitsch2003scenario,dupavcova2003scenario} between the original and a smaller size scenario set. This idea is extended by some authors by incorporating objective function information in the scenario reduction problem~\cite{morales2009scenario,bertsimas2022optimization}. Recently, in~\cite{rujeerapaiboon2022scenario} the authors prove tight bounds on the approximation error of a reduced scenario set obtained via Wasserstein distance minimization. Combining scenario reduction and optimization is rather recent and has been applied to some stochastic optimization models. The generalized adaptive partition method (APM) for two-stage stochastic problems with fixed recourse in~\cite{song2015adaptive} has a strong link with our contribution. This method was initially proposed in~\cite{espinoza2014primal} for CVaR minimization and is based on the ideas presented in~\cite{bienstock2010solving}. Recently, in~\cite{ramirez2022generalized} the generalized APM ideas were extended to the case where the uncertain parameters follow a continuous distribution. Moreover, APMs have been combined with decomposition methods~\cite{pay2020partition}, stochastic dual dynamic programming~\cite{siddig2022adaptive}, and Benders decomposition~\cite{ramirez2023benders}.

To the best of our knowledge, APMs have not been proposed for chance-constrained problems, despite being strongly connected to the scenario decomposition approach presented in \cite{ahmed2017nonanticipative}. We bridge this gap by proposing an adaptive partitioning approach for solving general CCSPs with finite support.

\subsection{Outline}
Section~\ref{sec:adaptive} presents the general adaptive partitioning method and discusses how to obtain upper and lower bounds on the original problem objective by solving partitioned problems. Section~\ref{sec:refinement} studies how to efficiently refine partitions by excluding existing feasible solutions. Section~\ref{sec:merge} shows similar results for merge operations. Section~\ref{sec:strong-part} presents how to create partitions with a guaranteed tight lower bound on the objective of the original CCSP. Section~\ref{sec:strong-part} presents practical strategies that enhance the behavior of the algorithm. Section~\ref{sec:numerical-results} evaluates the computational performance of our algorithm and compares it with state-of-the-art methods on classical instances of the CCSP literature. Finally, Section~\ref{sec:conclusion} summarizes our findings and suggests directions for future work.


\section{Adaptive Partitioning Method}
\label{sec:adaptive}
Our approach is based on creating partitions of the scenarios set, that is, grouping scenarios into subsets. We first formally define a partition. Then, we present two reduced-size models obtained via a partitioning of the scenario set. Both models yield bounds on the optimal objective of the original CCSP. Afterwards, we discuss how reduced-size models obtained via partitioning allow to compute tight big-$M$ parameters for Model~\eqref{eq:cclp-reform-bigm}. Finally, we explain the \APM and discuss its finite termination.

\begin{definition}
  A partition $\partitions = \left\lbrace \subpart_1, \subpart_2, \dots, \subpart_{\card{\partitions}}  \right\rbrace$ is a collection of non-empty subsets of the scenario set $\scenarios$ such that $\bigcup_{\subpart\in\partitions} \subpart = \scenarios$ and $\subpart_i \rev{\cap} \subpart_j = \emptyset , \myforall \subpart_i,\subpart_j\in\partitions$.
\end{definition}

\subsection{Partitioned CCSPs}

We now present how a reduced size CCSP is obtained by leveraging a partition~$\partitions$ of a scenario set~$\scenarios$. By aggregating the constraints of all the scenarios in a subset, we can construct two smaller-size chance-constrained problems that yield an upper and a lower bound on the optimal objective of the original CCSP. In the reduced CCSP each subset $\partition \in \partitions$ represents a unique scenario and the feasible set for $\partition\in\partitions$ is~$\probset^\partition = \bigcap_{\scenario\in\subpart}\probset^\scenario$.

 In~\cite{ahmed2017nonanticipative} the authors present the following reduced-size CCSP,
\begin{subequations}
    \label{eq:cclp-part}
    \begin{align}
        \objval^\LB(\partitions) = \min_{\decvar \in \decvarset} \quad
        & \objfun(\decvar) \\
        \st \quad
        & \indvar_\subpart = \boldone (\decvar \in \probset^\partition),
          \quad \subpart \in \partitions, \label{eq:indicator-var-part}\\
        & \sum_{\subpart \in \partitions}
          \proba_\subpart\indvar_\subpart \geq \sum_{\subpart \in
          \partitions} \proba_\subpart-\threshold, \label{eq:chance-cons-part}\\
        & \indvar_\subpart \in \{0,1\}, \quad \subpart \in \partitions, \label{eq:partition-def}
    \end{align}
\end{subequations}
where for each subset $\subpart\in\partitions$ the probability $\proba_\subpart = \min_{\scenario \in \subpart} \scenproba_\scenario$.
\begin{proposition}[from~\cite{ahmed2017nonanticipative}]
  \label{prop:lb-partition}
  The partitioned model~\eqref{eq:cclp-part} is a relaxation of the CCSP~\eqref{eq:cclp-reform}, \ie, $\objval^\LB(\partitions)\leq\objval^*$.
\end{proposition}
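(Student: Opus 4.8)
The plan is to prove that model~\eqref{eq:cclp-part} is a relaxation of model~\eqref{eq:cclp-reform} by the standard argument: take an arbitrary feasible solution of the original problem and exhibit a feasible solution of the partitioned problem attaining the same objective value. Since both are minimizations of the same objective $\objfun$ over the same decision set $\decvarset$, producing such a value-preserving map immediately yields $\objval^\LB(\partitions) \leq \objval^*$, because the partitioned feasible region then contains (the relevant projection of) the original one.

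Concretely, let $\decvar$ be feasible for~\eqref{eq:cclp-reform}, with induced scenario indicators $\indvar_\scenario = \boldone(\decvar \in \probset^\scenario)$. I keep the same $\decvar$ and set $\indvar_\subpart = \boldone(\decvar \in \probset^\partition)$ for each $\subpart \in \partitions$, as forced by~\eqref{eq:indicator-var-part}; these are automatically binary, so the only nontrivial point is the aggregated chance constraint~\eqref{eq:chance-cons-part}. Using $\sum_{\scenario \in \scenarios}\scenproba_\scenario = 1$, I rewrite both chance constraints in \enquote{violation} form: constraint~\eqref{eq:chance-cons2} is equivalent to $\sum_{\scenario \in \scenarios}\scenproba_\scenario(1-\indvar_\scenario) \leq \threshold$, and~\eqref{eq:chance-cons-part} is equivalent to $\sum_{\subpart \in \partitions}\proba_\subpart(1-\indvar_\subpart) \leq \threshold$. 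It therefore suffices to bound the total weight of the violated subsets by the total weight of the violated scenarios.

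The crux is the following comparison. Because $\probset^\partition = \bigcap_{\scenario \in \subpart}\probset^\scenario$, a subset is violated ($\indvar_\subpart = 0$) precisely when it contains at least one violated scenario ($\indvar_\scenario = 0$). For every violated subset I select one representative violated scenario inside it; since the subsets of a partition are pairwise disjoint, distinct subsets yield distinct representatives, giving an injection from the violated subsets into the violated scenarios. Combined with $\proba_\subpart = \min_{\scenario \in \subpart}\scenproba_\scenario \leq \scenproba_\scenario$ evaluated at the representative, this injection gives
\[
  \sum_{\subpart \in \partitions}\proba_\subpart(1-\indvar_\subpart) \;\leq\; \sum_{\scenario \in \scenarios}\scenproba_\scenario(1-\indvar_\scenario) \;\leq\; \threshold,
\]
which is exactly the reformulated version of~\eqref{eq:chance-cons-part}. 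Hence $(\decvar,\indvar)$ is feasible for the partitioned model with objective $\objfun(\decvar)$; minimizing over feasible $\decvar$ yields $\objval^\LB(\partitions) \leq \objval^*$.

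I expect the main (though mild) obstacle to be precisely this representative/injection step: it relies on both the disjointness of the partition and the specific choice $\proba_\subpart = \min_{\scenario \in \subpart}\scenproba_\scenario$, since a larger definition of $\proba_\subpart$ would break the bounding inequality. The remaining manipulations are routine once the normalization $\sum_{\scenario \in \scenarios}\scenproba_\scenario = 1$ is invoked to pass between the original and aggregated constraints.
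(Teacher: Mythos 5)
Your proof is correct. The paper itself gives no proof of this proposition---it is imported directly from~\cite{ahmed2017nonanticipative}---so there is nothing in-paper to compare against; your argument (keep $\decvar$ fixed, set $\indvar_\subpart = \boldone(\decvar\in\probset^\partition)$, and bound the total weight of violated subsets by an injection into the violated scenarios combined with $\proba_\subpart = \min_{\scenario\in\subpart}\scenproba_\scenario \leq \scenproba_\scenario$) is the standard one from that reference, and it mirrors, in the reverse direction, the paper's own feasibility-mapping proof of Proposition~\ref{prop:ub-partition}.
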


Conversely, if we set the probability of each subset $\subpart\in\partitions$ to $\rev{\sumproba_\subpart} = \sum_{\scenario \in \subpart} \scenproba_\scenario$. Then, another reduced-size CCSP reads
\begin{subequations}
  \label{eq:cclp-part2}
  \begin{align}
      \objval^\UB(\partitions) = \min_{\decvar \in \decvarset} \quad
    & \objfun(\decvar) \\
    \st \quad
    & \indvar_\subpart = \boldone (\decvar \in \probset^\partition),
      \quad \subpart \in \partitions, \label{eq:indicator-var-part2}\\
    & \sum_{\subpart \in \partitions} \rev{\sumproba_\subpart}\indvar_\subpart \geq 1-\threshold, \label{eq:chance-cons-part2}\\
    & \indvar_\subpart \in \{0,1\}, \quad \subpart \in \partitions. \label{eq:partition-def2}
  \end{align}
\end{subequations}

\begin{proposition}
    \label{prop:ub-partition}
    The scenario grouping model~\eqref{eq:cclp-part2} is a restriction of the CCSP~\eqref{eq:cclp-reform}, \ie, $\objval^*\leq\objval^\UB(\partitions)$.
\end{proposition}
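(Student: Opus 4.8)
The plan is to establish the inequality by showing that Model~\eqref{eq:cclp-part2} is a genuine restriction of Model~\eqref{eq:cclp-reform}: every $\decvar \in \decvarset$ that is feasible for the scenario grouping model is also feasible for the original CCSP, with identical objective value $\objfun(\decvar)$. Since both problems minimize the same objective and the grouping model's feasible region is contained in that of the original, this containment immediately yields $\objval^* \leq \objval^\UB(\partitions)$.

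First I would fix a feasible $\decvar$ of~\eqref{eq:cclp-part2} and compare the two families of indicator values it induces: the subset indicators $\indvar_\subpart = \boldone(\decvar \in \probset^\partition)$ that appear in the grouping model, and the scenario indicators $\indvar_\scenario = \boldone(\decvar \in \probset^\scenario)$ that appear in the original model. The crucial observation is that $\probset^\partition = \bigcap_{\scenario \in \subpart} \probset^\scenario$, so $\indvar_\subpart = 1$ forces $\decvar \in \probset^\scenario$ for every $\scenario \in \subpart$. Consequently $\indvar_\subpart \leq \indvar_\scenario$ for all $\scenario \in \subpart$ and all $\subpart \in \partitions$.

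Next I would verify that $\decvar$ satisfies the original chance constraint~\eqref{eq:chance-cons2}. Because $\partitions$ partitions $\scenarios$ into disjoint subsets, I can regroup the sum over scenarios subset by subset and apply the pointwise bound just established:
\begin{align*}
  \sum_{\scenario \in \scenarios} \scenproba_\scenario \indvar_\scenario
    = \sum_{\subpart \in \partitions} \sum_{\scenario \in \subpart} \scenproba_\scenario \indvar_\scenario
    \geq \sum_{\subpart \in \partitions} \indvar_\subpart \sum_{\scenario \in \subpart} \scenproba_\scenario
    = \sum_{\subpart \in \partitions} \sumproba_\subpart \indvar_\subpart
    \geq 1 - \threshold,
\end{align*}
where the penultimate equality uses the definition $\sumproba_\subpart = \sum_{\scenario \in \subpart} \scenproba_\scenario$ and the final inequality is exactly the feasibility of $\decvar$ for the grouping constraint~\eqref{eq:chance-cons-part2}. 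This shows $\decvar$ is feasible for~\eqref{eq:cclp-reform}, which completes the containment and hence the desired bound.

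I do not anticipate a serious obstacle here; the argument is a direct restriction-plus-aggregation computation. The only point requiring care is the direction of the indicator inequality — that replacing a scenario by the intersection $\probset^\partition$ over a whole subset makes its indicator a lower bound for each individual scenario indicator. This is precisely what makes the grouping constraint harder to satisfy and thus yields a restriction rather than a relaxation, mirroring how the complementary choice $\proba_\subpart = \min_{\scenario \in \subpart} \scenproba_\scenario$ in~\eqref{eq:cclp-part} produces a relaxation in Proposition~\ref{prop:lb-partition}.
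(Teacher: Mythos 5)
Your proposal is correct and follows essentially the same route as the paper's proof: take any feasible point of Model~\eqref{eq:cclp-part2}, relate the subset indicators to the scenario indicators via $\probset^\partition = \bigcap_{\scenario\in\subpart}\probset^\scenario$, and aggregate probabilities over the partition to verify Constraint~\eqref{eq:chance-cons2}. If anything, your version is slightly more careful than the paper's, which directly assigns $\indvar_\scenario = \indvar'_\subpart$ (a choice that need not satisfy the equality in Constraint~\eqref{eq:indicator-var} when a scenario is individually feasible but its subset is not), whereas your inequality $\indvar_\subpart \leq \indvar_\scenario$ handles this point explicitly.
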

\begin{proof}
    Let $\decvar'\in\decvarset$ be any point inside the feasible set of Model~\eqref{eq:cclp-part2} and let $\indvar'_\subpart$ satisfy Constraint~\eqref{eq:indicator-var-part2}. We set $\indvar_\scenario = \indvar'_\subpart$ for all $\partition\in\partitions$ and $\scenario \in \subpart$. Then, Constraint~\eqref{eq:chance-cons-part2} yields $\sum_{\scenario \in \scenarios} \scenproba_\scenario\indvar_\scenario = \sum_{\subpart \in \partitions} \rev{\sumproba_\subpart}\indvar_\subpart \geq 1-\threshold$ and $\decvar'$ is feasible for Model~\eqref{eq:cclp-reform}.
\end{proof}

\subsection{Adaptive Partitioning Method} We now discuss how the \APM works. The lower bound model~\eqref{eq:cclp-part} and the upper bound model~\eqref{eq:cclp-part2} are core components of the \APM. The partition~$\partitions$ is modified so that tighter upper and lower bounds on the original CCSP are obtained. Algorithm~\ref{alg:adaptive-partitioning} describes the general idea of the \APM.

\begin{algorithm}[!ht]
    \caption{Adaptive Partitioning Method.}
    \label{alg:adaptive-partitioning}
    \DontPrintSemicolon
    \SetKwInOut{Input}{Input}
    \Input{scenario set $\scenarios$, stopping criterion $\varepsilon \in \left(0,1\right)$.}
    \SetKwInOut{Output}{Output}
    \Output{optimal solution $\decvar^*$ of Model~\eqref{eq:cclp-reform}.}
    \SetKwInOut{Initialize}{Initialize}
    \Initialize{$\iter \leftarrow 0$, $v^\UB \leftarrow +\infty$, $v^\LB \leftarrow -\infty$.}
    Design the first partition $\partitions^0$. \label{alg:line-partition-design}\\
    \While{$(v^\UB - v^\LB)/\rev{\abs{v^\UB}} \geq \varepsilon$ \label{alg:lin-while-cond}}{
        \tcc{Compute a tighter lower bound.}
        Find $\lbsol^\iter$, the solution of Model~\eqref{eq:cclp-part} for the partition $\partitions^\iter$.\\
        \If{$\objval(\lbsol^\iter) > v^\LB$}{
            Set $v^\LB \leftarrow \objval(\lbsol^\iter)$.
        }
        \tcc{Compute a tighter upper bound.}
        Find $\ubsol^\iter$, the solution of Model~\eqref{eq:cclp-part2} for the partition $\partitions^\iter$.\\
        \If{$\objval(\ubsol^\iter) < v^\UB$}{
            Set $x^\UB \leftarrow \ubsol^\iter$ and $v^\UB \leftarrow \objval(\ubsol^\iter)$.
        }
        \tcc{Modify the partition.}
        Modify $\partitions^{\iter}$ to obtain a new partition $\partitions^{\iter+1}$. \label{alg:line-partition-update}\\
        Increment iteration $\iter \leftarrow \iter + 1$.
    }
    \KwRet{$\decvar^\UB$}
  \end{algorithm}


Algorithm~\ref{alg:adaptive-partitioning} contains three main steps: computing a lower bound, computing an upper bound, and modifying the partition. The modification of the partition has the biggest influence on the computational performance of the \APM. Typically, Algorithm~\ref{alg:adaptive-partitioning} exhibits a trade-off between solving a large number of computationally tractable CCSPs or solving a small number of computationally intensive CCSPs. The performance of Algorithm~\ref{alg:adaptive-partitioning} also depends on the design of the initial partition~$\partitions^0$ as it dictates the tightness of the initial bounds \wrt the optimal value~$\objval^*$. The initial partition~$\partitions^0$ also influences the structure of any subsequent partition.

\rev{Algorithm~\ref{alg:adaptive-partitioning} runs until a user-specified optimality gap is reached. This termination condition can be replaced by a time limit~$\comptime<\comptime_{\text{max}}$, where the parameters~$\comptime$ and~$\comptime_{\text{max}}$ represent the elapsed time and the maximum computation time, respectively. The time limit should be sufficiently large to find initial lower and upper bounds. When the time limit is reached, the algorithm can return the current best solution~$\decvar^\UB$ and the relative optimality gap $(v^\UB - v^\LB)/\abs{v^\UB}$.}

\subsection{Finite Termination}
As long as $\card{\partitions^\iter}$ increases over the course of the iterations, Algorithm~\ref{alg:adaptive-partitioning} terminates in a finite number of iterations and recovers the optimal solution of Model~\eqref{eq:cclp-reform}. Indeed, if $\card{\partitions^{\iter + 1}} > \card{\partitions^{\iter}}$ holds for all $\iter$, the partition $\partitions^\iter$ will eventually contain exactly one scenario per subset $\subpart \in \partitions^\iter$. In that case, Model~\eqref{eq:cclp-part} reduces to Model~\eqref{eq:cclp-reform}, and Algorithm~\ref{alg:adaptive-partitioning} returns the optimal solution of Model~\eqref{eq:cclp-reform}. The worst-case number of iterations needed by Algorithm~\ref{alg:adaptive-partitioning} to recover the optimal solution of Model~\eqref{eq:cclp-reform} is $\card{\scenarios} - \card{\partitions^0}$. However, the aim when applying Algorithm~\ref{alg:adaptive-partitioning} is to avoid reaching the point where~$\partitions^\iter$ equals~$\scenarios$ because in that case there is no computational advantage of using the~\APM.

Our presentation of Algorithm~\ref{alg:adaptive-partitioning} allows for flexibility in designing each of its main steps. For instance, computing the upper bound can be performed by a heuristic that does not necessarily solve a reduced size problem obtained via a partitioning of the scenarios. In the following, we present several methods to design and modify partitions that preserve finite termination of the method. \rev{These methods are based on refinements that split a scenario subset into two new subsets. These operations are discussed in detail in the next section. In particular, we show how to construct minimal-size refinements that guarantee a strict increase of the resulting lower bound.}


\section{On Constructing Minimal Size Refinements}
\label{sec:refinement}

In this section, we present how we propose to modify a partition in Algorithm~\ref{alg:adaptive-partitioning}. We make the following assumption throughout the document.
\begin{assumption}
  \label{ass:equiprobable}
   All scenarios are equiprobable, \ie~$\scenproba_\scenario = 1/\card{\scenarios}$ for all~$\scenario\in\scenarios$.
\end{assumption}
Assumption~\ref{ass:equiprobable} is introduced mainly to simplify the notation. For instance, it always holds when scenarios are obtained via a sample-average approximation. Further, any scenario set with rational probabilities can be modified to satisfy Assumption~\ref{ass:equiprobable} by duplication of scenarios. Naturally, this increases the size of the scenario set but does not change the feasible set of Model~\eqref{eq:cclp-reform}.

\begin{remark}
    \label{remark:chance-cons-equi}
    If Assumption~\ref{ass:equiprobable} is satisfied, then Constraint~\eqref{eq:chance-cons-part} is equivalent to
    \begin{equation}
        \label{eq:condition-removal}
        \sum_{\partition\in\partitions} \indvar_\partition  \geq \abs{\partitions} - \lfloor \threshold \abs{\scenarios} \rfloor.
    \end{equation}
\end{remark}%
\rev{Remark~\ref{remark:chance-cons-equi} is obtained by observing that Assumption ~\ref{ass:equiprobable} implies that $\proba_\partition = \min_{\scenario\in\partition} 1/\abs{\scenarios}=1/\abs{\scenarios}$ and that all the variables~$\indvar_\partition$ are binary.} Remark~\ref{remark:chance-cons-equi} highlights that any partition~$\partitions$ should satisfy~$\abs{\partitions} > \lfloor \threshold \abs{\scenarios} \rfloor$. Indeed, if~$\abs{\partitions} \le \lfloor \threshold \abs{\scenarios} \rfloor$, then Constraint~\eqref{eq:chance-cons-part} is always trivially satisfied and any~$\decvar\in\decvarset$ is feasible for Model~\eqref{eq:cclp-part} with~$\partitions$.

\subsection{Solution Exclusion by Refinement}
\rev{We carry out the partition modification step in the \APM using refinements as described in Definition~\ref{def:refinement}.} This is explained by the fact that \rev{successive refinement} operations guarantee finite termination of the \APM at the optimal solution of the original CCSP.

\begin{definition}
  \label{def:refinement}
    A partition~$\refined$ is a refinement of a partition~$\partitions$ if for any $ \newrefined \in \refined$ there exists a subset~$\partition \in \partitions $ such that $ \newrefined \subseteq \partition$.
\end{definition}

We now study how to perform efficient refinement operations. We refine a partition by excluding the solution obtained in the previous iteration of the \APM. We prove that if this solution is unique, then we can create a minimal size refinement~$\refined$ of~$\partitions$ such that~$\objval^{\LB}(\refined) > \objval^{\LB}(\partitions)$.

To carry out this proof, we proceed in the following way. First, in Proposition~\ref{prop:refined-relax}, we show that for any refinement~$\refined$ of a partition~$\partitions$, Model~\eqref{eq:cclp-part} with~$\partitions$ is a relaxation of Model~\eqref{eq:cclp-part} with~$\refined$. \rev{Second, in Proposition~\ref{prop:smallest-refinement}, we discuss how we can design refinements such that a specific feasible point of Model~\eqref{eq:cclp-part} with~$\partitions$, which is not feasible for Model~\eqref{eq:cclp-reform}, becomes infeasible for Model~\eqref{eq:cclp-part} with~$\refined$. Using the ideas of Proposition~\ref{prop:smallest-refinement} we introduce a simple procedure for producing such refinements in Algorithm~\ref{alg:refinement}. Third, in Proposition~\ref{prop:lower-bound-part}, we prove the validity of a lower bound on the number of refinements needed to exclude a feasible solution. Then, by combining Proposition~\ref{prop:smallest-refinement} and Proposition~\ref{prop:lower-bound-part} in Corollary~\ref{cor:min-size-ref}, we show that this bound is tight for any refinement obtained with Algorithm~\ref{alg:refinement}.} Finally, in Theorem~\ref{thm:strictly-incr-lower}, we use the previous results to show how a refinement~$\refined$ satisfying~$\objval^{\LB} ( \refined) > \objval^{\LB} ( \partitions)$ is obtained in Algorithm~\ref{alg:adaptive-partitioning}.

To simplify the presentation, we use the following notation throughout the remainder of this section. We let~$\partitions$ denote a partition of the scenario set~$\scenarios$ and~$\refined$ be a partition resulting from a refinement of~$\partitions$. Given any point~$\rev{\decvar}\in\decvarset$, we define~$\indvar_\partition(\rev{\decvar}) = \boldone (\rev{\decvar }\in \bigcap_{\scenario\in\partition}\probset^\scenario)$ the value of the indicator variables associated with a subset~$\subpart\in\partitions$, as well as~$\feaspartitions(x) \define \{\partition\in\partitions:\ \decvar\in\probset^\partition\}$ the set of subsets that are feasible for $x$ and~$\infpartitions(x) \define \{\subpart\in\partitions:\ \decvar\notin\probset^\partition\}$ the set of subsets of~$\partitions$ that are infeasible for $x$. Naturally,~$\partitions = \feaspartitions(x) \cup \infpartitions(x)$. \rev{Similar to $\infpartitions(x)$, for any point~$\rev{\decvar}\in\decvarset$, we define $\infscenarios (x) \define \{\scenario\in\scenarios:\ \decvar\notin\probset^\scenario\}$ the set of scenarios that are infeasible for $x$.}

\begin{proposition}
  \label{prop:refined-relax}
  \rev{If $\refined$ is a refinement of partition $\partitions$, then} Model~\eqref{eq:cclp-part} with~$\partitions$ is a relaxation of Model~\eqref{eq:cclp-part} with~$\refined$, \ie,
  \begin{equation*}
    \label{eq:lower-bound-refinement}
    \objval^* \geq \objval^{\LB}(\refined) \geq \objval^{\LB}(\partitions).
  \end{equation*}
\end{proposition}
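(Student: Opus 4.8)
The plan is to prove the two inequalities separately. The left inequality $\objval^* \geq \objval^{\LB}(\refined)$ is immediate: since $\refined$ is itself a partition of the scenario set $\scenarios$, Proposition~\ref{prop:lb-partition} applied to $\refined$ already gives $\objval^{\LB}(\refined) \leq \objval^*$. All the work therefore goes into the right inequality $\objval^{\LB}(\refined) \geq \objval^{\LB}(\partitions)$, which I would establish by showing that Model~\eqref{eq:cclp-part} with $\partitions$ is a relaxation of Model~\eqref{eq:cclp-part} with $\refined$. Because both models minimize the same objective $\objfun(\decvar)$ and the indicator variables are fully determined by $\decvar$ through the equality constraints~\eqref{eq:indicator-var-part}, it suffices to show that every $\decvar$ feasible for the refined model is feasible for the coarse model; the minimum over the smaller feasible region can then only be larger.

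The core of the argument is a monotonicity observation. For any refined subset $\newrefined \in \refined$ contained in a coarse subset $\partition \in \partitions$, we have $\newrefined \subseteq \partition$, so $\probset^\partition = \bigcap_{\scenario \in \partition} \probset^\scenario \subseteq \bigcap_{\scenario \in \newrefined} \probset^\scenario = \probset^{\newrefined}$. I would use this to bound the number of infeasible subsets. Fix $\decvar \in \decvarset$ and consider any infeasible coarse subset $\partition \in \infpartitions(\decvar)$, meaning $\decvar \notin \probset^\partition$; then there is a scenario $\scenario \in \partition$ with $\decvar \notin \probset^\scenario$. Letting $\newrefined \in \refined$ be the unique refined subset with $\scenario \in \newrefined \subseteq \partition$, the monotonicity $\probset^{\newrefined} \subseteq \probset^\scenario$ forces $\decvar \notin \probset^{\newrefined}$, so $\newrefined \in \infrefined(\decvar)$. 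Since distinct coarse subsets are disjoint and each refined subset lies in exactly one of them, this assignment $\partition \mapsto \newrefined$ is injective, yielding $\abs{\infpartitions(\decvar)} \leq \abs{\infrefined(\decvar)}$.

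To finish, I would invoke Remark~\ref{remark:chance-cons-equi}: under Assumption~\ref{ass:equiprobable}, the chance constraint~\eqref{eq:chance-cons-part} for a partition amounts to requiring that the number of infeasible subsets not exceed $\floor{\threshold \abs{\scenarios}}$. Hence if $\decvar$ is feasible for the refined model, then $\abs{\infpartitions(\decvar)} \leq \abs{\infrefined(\decvar)} \leq \floor{\threshold \abs{\scenarios}}$, so $\decvar$, together with $\indvar_\partition = \indvar_\partition(\decvar)$, is feasible for the coarse model, completing the inclusion of feasible regions. The main obstacle is the combinatorial counting step $\abs{\infpartitions(\decvar)} \leq \abs{\infrefined(\decvar)}$; the subtlety there is to select, for each infeasible coarse subset, a genuinely infeasible refined subset inside it and to verify that the resulting map is injective, which relies on the partition subsets being pairwise disjoint. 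Everything else reduces to the monotonicity of intersections and to the equiprobable reformulation of the chance constraint.
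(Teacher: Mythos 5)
Your proof is correct, but it takes a genuinely different route from the paper's. The paper never argues about feasible-set inclusion directly: it reformulates Model~\eqref{eq:cclp-part} with~$\refined$ as a standalone CCSP in which~$\refined$ plays the role of the scenario set, observes that~$\partitions$ is then a partition of that new scenario set, applies Proposition~\ref{prop:lb-partition} hierarchically to get~$\objval^{\LB}(\refined) \geq \objval^{\LB}(\partitions(\refined))$, and finally verifies that the model built on~$\partitions(\refined)$ coincides with the model built on~$\partitions(\scenarios)$ (nested minima of probabilities and nested intersections of feasible sets collapse). You instead prove the feasible region of the refined model is contained in that of the coarse model, via the injective map sending each infeasible coarse subset to an infeasible refined subset inside it, and then invoke the cardinality form of the chance constraint from Remark~\ref{remark:chance-cons-equi}. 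Your argument is more elementary and self-contained for the right inequality, but note one trade-off: by routing through Remark~\ref{remark:chance-cons-equi} you use Assumption~\ref{ass:equiprobable}, whereas the paper's proof is probability-agnostic --- and the paper immediately remarks that the proposition holds whether or not Assumption~\ref{ass:equiprobable} is satisfied, a fact your argument as written does not deliver. The gap is easily repaired within your own framework: replace the count of infeasible subsets by the sum of their probabilities~$\proba_\partition = \min_{\scenario\in\partition}\scenproba_\scenario$, and observe that your injection~$\partition \mapsto \newrefined(\partition)$ with~$\newrefined(\partition) \subseteq \partition$ satisfies~$\proba_\partition \leq \proba_{\newrefined(\partition)}$, so that~$\sum_{\partition\in\infpartitions(\decvar)}\proba_\partition \leq \sum_{\newrefined\in\infrefined(\decvar)}\proba_\newrefined \leq \threshold$, which is exactly Constraint~\eqref{eq:chance-cons-part} in its general form.
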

\begin{proof}

  \rev{First, we reformulate the model of~$\objval^{\LB}(\refined)$ as a standalone CCSP. We introduce}
  \begin{equation*}
    \newproba_{\newrefined} = \frac{\proba_{\newrefined}}{\sum_{\newrefined' \in \refined}\proba_{\newrefined'}},\quad
    \varepsilon = \frac{\tau}{\sum_{{\newrefined'} \in \refined} \proba_{\newrefined'}}.
  \end{equation*}
  \rev{This yields the following CCSP,}
  \begin{subequations}
    \label{eq:reform-refined}
    \begin{align}
        \objval^\LB(\refined) = \min_{\decvar \in \decvarset} \quad
        & \objfun(\decvar) \\
        \st \quad
        & \indvar_\newrefined = \boldone (\decvar \in \probset^\newrefined),
          \quad \newrefined \in \refined, \\
        & \sum_{\newrefined \in \refined}
          \newproba_\newrefined\indvar_\newrefined \geq 1-\varepsilon, \\
        & \indvar_\newrefined \in \{0,1\}, \quad \newrefined \in \refined.
    \end{align}
  \end{subequations}
  \rev{This reformulation shows that $\refined$ can be considered as a standalone scenario set that parameterizes Model~\eqref{eq:reform-refined}. Each subset $\newrefined \in \refined$ can be seen as an individual scenario of the CCSP~\eqref{eq:reform-refined} with associated feasible set $\probset^\newrefined$ and probability $\newproba_\newrefined$.

  Since~$\refined$ is a refinement of~$\partitions$ and a standalone scenario set for Model~\eqref{eq:reform-refined} the partition~$\partitions$ can be seen as a partition of~$\refined$. We introduce the notation~$\partitions (\refined)$ and~$\partitions (\scenarios)$ to highlight the case in which~$\partitions$ is defined using $\refined$ or using $\scenarios$, respectively. By Proposition~\ref{prop:lb-partition}, we have~$\objval^{\LB}(\refined) \geq \objval^{\LB}(\partitions(\refined))$. We proceed by showing that the model of~$\objval^{\LB}(\partitions(\refined))$ is equivalent to the model of~$\objval^{\LB}(\partitions(\scenarios))$.} If this holds, then~$ \objval^{\LB}(\refined) \geq \objval^{\LB}(\partitions(\refined)) = \objval^{\LB}(\partitions(\scenarios)) = \objval^{\LB}(\partitions)$.

We write Constraint~\eqref{eq:chance-cons-part} for the model of~$\objval^\LB(\partitions(\refined))$, it reads,
  \begin{align*}
    \sum_{\subpart \in \partitions (\refined)} \proba_\subpart\indvar_\subpart &\geq \sum_{\subpart \in \partitions (\refined)} \proba_\partition-\varepsilon,\\
    \sum_{\subpart \in \partitions (\refined)} \min_{\newrefined \in \subpart} (\newproba_{\rev{\newrefined}}) \indvar_\subpart &\geq \sum_{\subpart \in \partitions (\refined)} \min_{\newrefined \in \subpart} (\newproba_{\rev{\newrefined}}) - \varepsilon,\\
    \sum_{\subpart \in \partitions (\refined)}
    \min_{\newrefined \in \subpart} (\min_{\scenario\in\newrefined}(\rev{\scenproba_{\scenario}})) \indvar_\subpart &\geq \sum_{\subpart \in\partitions (\refined)} \min_{\newrefined \in \subpart} (\min_{\scenario\in\newrefined} (\rev{\scenproba_{\scenario}})) - \threshold,
  \end{align*}
    which is equivalent to Constraint~\eqref{eq:chance-cons-part} for the model of~$\objval^{\LB}(\partitions(\scenarios))$. Similarly, we state Constraint~\eqref{eq:indicator-var} for the model of~$\objval^{\LB}(\partitions(\refined))$,
    \begin{equation*}
        \indvar_\partition = \boldone (\decvar \in \probset^{\partition}) = \boldone (\decvar \in \bigcap_{\newrefined\in\partition}\probset^{\newrefined}) = \boldone (\decvar \in  \bigcap_{\newrefined\in\partition} \bigcap_{\scenario\in\newrefined} \probset^{\scenario}),
    \end{equation*}
    which is equivalent to Constraint~\eqref{eq:indicator-var} for the model of~$\objval^{\LB}(\partitions(\scenarios))$. Hence,~$\objval^{\LB}(\partitions(\refined)) = \objval^{\LB}(\partitions( \scenarios))$ and Model~\eqref{eq:cclp-part} with~$\partitions$ is a relaxation of Model~\eqref{eq:cclp-part} with~$\refined$.
\end{proof}

\begin{remark}
  Proposition~\ref{prop:refined-relax} holds whether Assumption~\ref{ass:equiprobable} is satisfied or not.
\end{remark}

\begin{remark}
  By Proposition~\ref{prop:refined-relax} we know that any big-M coefficient obtained for a partition~$\partitions$ is valid for any refinement ~$\refined$ of~$\partitions$. This property allows to keep the same big-$\bigM$ values through the iterations of Algorithm~\ref{alg:adaptive-partitioning} if only refinements are carried out.
\end{remark}

Next, we show that \rev{for any feasible solution~$\lbsol$ of Model~\eqref{eq:cclp-part} with~$\partitions$ that is infeasible for Model~\eqref{eq:cclp-reform} we can construct a refinement $\refined$ of $\partitions$ such that~$\lbsol$ is infeasible for Model~\eqref{eq:cclp-part} with~$\refined$.}

\begin{proposition}
    \label{prop:smallest-refinement}
    Let the point~$\lbsol$ be feasible for Model~\eqref{eq:cclp-part} with~$\partitions$ but infeasible for Model~\eqref{eq:cclp-reform}. There exists a refinement~$\refined$ of~$\partitions$ with size~$\card{\refined} = \card{\partitions} + \extrasize$, where
    \begin{equation*}
        \label{eq:extrasize-bound}
        \extrasize = \lfloor \tau\card{\scenarios} \rfloor + 1 - \card{\infpartitions(\lbsol)},
    \end{equation*}
    such that~$\lbsol$ is infeasible for Model~\eqref{eq:cclp-part} with~$\refined$.
\end{proposition}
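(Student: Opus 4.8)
The plan is to reduce the statement to a counting problem via Remark~\ref{remark:chance-cons-equi} and then build $\refined$ by a sequence of single-subset splits. First I would restate the two hypotheses and the goal as cardinality inequalities. Applying Remark~\ref{remark:chance-cons-equi} to a partition and evaluating the indicator variables at $\lbsol$, and using that $\sum_{\partition\in\partitions}\indvar_\partition(\lbsol) = \card{\partitions} - \card{\infpartitions(\lbsol)}$, the chance constraint of Model~\eqref{eq:cclp-part} holds at $\lbsol$ if and only if $\card{\infpartitions(\lbsol)} \leq \lfloor \threshold\card{\scenarios}\rfloor$. Reading this for $\partitions$, for the singleton partition underlying Model~\eqref{eq:cclp-reform}, and for $\refined$ in turn, the data of the problem become: $\lbsol$ feasible for Model~\eqref{eq:cclp-part} with $\partitions$ means $\card{\infpartitions(\lbsol)} \leq \lfloor\threshold\card{\scenarios}\rfloor$; $\lbsol$ infeasible for Model~\eqref{eq:cclp-reform} means $\card{\infscenarios(\lbsol)} \geq \lfloor\threshold\card{\scenarios}\rfloor + 1$; and what must be achieved is $\card{\infrefined(\lbsol)} \geq \lfloor\threshold\card{\scenarios}\rfloor + 1$. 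In particular the first inequality already forces $\extrasize = \lfloor\threshold\card{\scenarios}\rfloor + 1 - \card{\infpartitions(\lbsol)} \geq 1$.

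Next I would describe the construction. A subset $\newrefined$ contained in some $\partition\in\partitions$ is infeasible for $\lbsol$ exactly when it contains at least one scenario of $\infscenarios(\lbsol)$. The elementary operation is to take an infeasible subset holding at least two infeasible scenarios and split it into two nonempty parts, each retaining at least one infeasible scenario; this raises the partition size by one, raises the number of infeasible subsets by exactly one, leaves every other subset (in particular every feasible one) untouched, and by Definition~\ref{def:refinement} produces a refinement. Starting from $\partitions$ I would apply this operation repeatedly, stopping as soon as the number of infeasible subsets reaches $\lfloor\threshold\card{\scenarios}\rfloor + 1$. Since each step increases the infeasible-subset count by one from its initial value $\card{\infpartitions(\lbsol)}$, exactly $\extrasize$ steps are performed, giving $\card{\refined} = \card{\partitions} + \extrasize$ and $\card{\infrefined(\lbsol)} = \lfloor\threshold\card{\scenarios}\rfloor + 1$, which by the translation above means $\lbsol$ is infeasible for Model~\eqref{eq:cclp-part} with $\refined$.

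The point requiring care, and the main obstacle, is to show that the elementary operation remains available until the target count is reached, \ie that whenever the current number of infeasible subsets is some $t \leq \lfloor\threshold\card{\scenarios}\rfloor$, there still exists an infeasible subset containing at least two infeasible scenarios. This is a pigeonhole argument: each scenario of $\infscenarios(\lbsol)$ lies in exactly one subset, which is necessarily infeasible, so the $\card{\infscenarios(\lbsol)}$ infeasible scenarios are distributed among only $t$ infeasible subsets; since $\card{\infscenarios(\lbsol)} \geq \lfloor\threshold\card{\scenarios}\rfloor + 1 > t$, some infeasible subset must hold at least two of them and the split can proceed. This invariant simultaneously guarantees that the procedure does not stall before reaching $\lfloor\threshold\card{\scenarios}\rfloor + 1$ infeasible subsets and confirms that exactly $\extrasize$ splits suffice, establishing both the size claim and the infeasibility of $\lbsol$.
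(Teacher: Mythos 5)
Your proof is correct and follows essentially the same route as the paper's: both reduce feasibility at~$\lbsol$ to counting infeasible subsets via Remark~\ref{remark:chance-cons-equi}, and both construct~$\refined$ by repeatedly splitting infeasible subsets that contain at least two infeasible scenarios so as to create exactly~$\extrasize$ new infeasible subsets (precisely the construction of Algorithm~\ref{alg:refinement}). The only cosmetic difference is bookkeeping: you justify that a split remains available at every step through an iterative pigeonhole invariant, whereas the paper establishes the existence of all~$\extrasize$ splits at once through the counting inequality $\extrasize \leq \card{\scenarios_2(\lbsol)} - \card{\partitions_2(\lbsol)}$.
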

\begin{proof}
  For what follows, we fix the value of parameter~$\extrasize$ to~$\lfloor \tau\card{\scenarios} \rfloor + 1 - \card{\infpartitions(\lbsol)}$. In addition, we introduce the sets
  \begin{align*}
    \partitions_1(\lbsol) &= \{\partition\in\partitions: \ \card{\partition\cap \infscenarios(\lbsol)}=1\},\quad \scenarios_1(\lbsol) =   \infscenarios (\lbsol) \cap ( \cup_{\partition\in\partitions_1 (\lbsol)} \partition ),\\
    \partitions_{2}(\lbsol) &= \{\partition\in\partitions: \ \card{\partition\cap \infscenarios(\lbsol)}\geq2\}, \quad \scenarios_{2}(\lbsol) = \infscenarios (\lbsol) \cap ( \cup_{\partition\in\partitions_2 (\lbsol)} \partition ),
  \end{align*}
  \rev{We have~$\card{\partitions_1(\lbsol)} = \card{\scenarios_1(\lbsol)}$}.
  Since~$\lbsol$ is infeasible for Model~\eqref{eq:cclp-reform} we know that~$\card{\infscenarios(\lbsol)} \geq \lfloor \tau\card{\scenarios} \rfloor + 1$\rev{, this implies that}
  \begin{align}
    \extrasize
    & \leq \card{\infscenarios(\lbsol)} - \card{\infpartitions(\lbsol)}, \nonumber\\
    & \leq  \card{\scenarios_1(\lbsol)} + \card{\scenarios_2(\lbsol)} - \card{\partitions_1(\lbsol)} - \card{\partitions_2(\lbsol)},\nonumber\\
    & \rev{=} \card{\scenarios_2(\lbsol)} - \card{\partitions_2(\lbsol)}. \label{eq:split-possible}
  \end{align}
By Equation~\eqref{eq:split-possible} we know that there exist at least~$\extrasize$ scenarios~$\scenario \in \scenarios_2 (\lbsol)$ that can be removed from a set~$\partition \in \partitions_2(\lbsol)$ while \rev{ensuring} that~$\card{\infpartitions(\lbsol)}$ stays unchanged. \rev{We carry out this removal and create $\refined$ by giving birth to exactly~$\extrasize$ infeasible subsets.}

\rev{As a consequence, we have}
\begin{align}
  \card{\refined}
  &= \card{\partitions} + \extrasize, \nonumber\\
  &= \card{\feaspartitions(\lbsol)} +  \lfloor \tau\card{\scenarios} \rfloor + 1,\nonumber\\
  &= \sum _{\newrefined \in \refined} \indvar_\newrefined (\lbsol) + \lfloor \tau\card{\scenarios} \rfloor + 1, \label{eq:unrdored-infeasible}
\end{align}
where we used that~$\card{\partitions(\lbsol)} = \card{\infpartitions(\lbsol)} + \card{\feaspartitions(\lbsol)}$ and~$\sum _{\newrefined \in \refined} \indvar_\newrefined (\lbsol) = \card{\feasrefined(\lbsol)} = \card{\feaspartitions(\lbsol)} = \sum _{\partition \in \partitions} \indvar_\partition (\lbsol)$.
\rev{By reorganizing the terms in Equation~\eqref{eq:unrdored-infeasible}, we observe that $\lbsol$ is not feasible for Model~\eqref{eq:cclp-part} with~$\refined$.}
\end{proof}

The proof of Proposition~\ref{prop:smallest-refinement} describes a method that requires low computational effort for creating a refined partition~$\refined$ where~$\lbsol$ is excluded from the feasible set of Model~\eqref{eq:cclp-part} with~$\refined$.  Algorithm~\ref{alg:refinement} describes this method. As explained in the proof of Proposition~\ref{prop:smallest-refinement},~$\refined$ is created by splitting~$\extrasize$ subsets that contain at least two infeasible scenarios. Each split creates two new subsets that contain at least one infeasible scenario. \rev{We draw attention to the fact that in Line~\ref{alg:line-remaining-split} of Algorithm~\ref{alg:refinement}, the method for allocating the remaining scenarios is not specified. This omition is intentional, as this step is discussed in detail in Section~\ref{sec:accurate-split}. The next proposition describes a lower bound on the size of~$\refined$.}

\begin{algorithm}[ht]
  \caption{Minimal Size Refinement.}
  \label{alg:refinement}
  \DontPrintSemicolon
  \SetKwInOut{Input}{Input}
  \Input{scenario set $\scenarios$, partition $\partitions$ of $\scenarios$, point $\lbsol \in \decvarset$.}
  \SetKwInOut{Output}{Output}
  \Output{refinement $\refined$ of $\partitions$, where $\lbsol$ is not feasible for Model~\eqref{eq:cclp-part} with $\refined$.}
  \SetKwInOut{Initialize}{Initialize}
  \Initialize{$\refined \leftarrow \partitions$ and $\extrasize \leftarrow \lfloor \tau\card{\scenarios} \rfloor + 1 - \card{\infpartitions(\lbsol)}$.}
  \While{$\card{\refined} < \card{\partitions} + \extrasize$}{
    Select $\newrefined_1 \in \infrefined(\lbsol)$ such that $\card{\newrefined_1 \cap \infscenarios(\lbsol)} \geq 2$. \label{alg:line-refinement-selection}\\
    Select two infeasible scenarios $\scenario_1, \scenario_2 \in \newrefined_1 \cap \infscenarios(\lbsol)$.\\
    Set $\newrefinedleft \leftarrow \{\scenario_1\}$ and $\newrefinedright \leftarrow \{\scenario_2\}$. \label{alg:line-inf-split}\\
    Allocate all remaining scenarios $\scenario \in \newrefined_1 \setminus \{\scenario_1,\scenario_2\}$ to $\newrefinedleft$ and $\newrefinedright$. \label{alg:line-remaining-split}\\
    Set $\refined \leftarrow\refined \setminus \{\newrefined_1\} \cup \{\newrefinedleft,\newrefinedright\}$.
  }
  \KwRet{$\refined$}
\end{algorithm}


\begin{proposition}
    \label{prop:lower-bound-part}
    Let the point~$\lbsol$ be feasible for Model~\eqref{eq:cclp-part} with~$\partitions$ but infeasible for Model~\eqref{eq:cclp-reform}. If a refinement~$\refined$ of~$\partitions$ is such that~$\lbsol$ is not feasible for Model~\eqref{eq:cclp-part} with~$\refined$, then
    \begin{equation*}
        \card{\refined}-\card{\partitions}\geq \extrasize = \lfloor \tau\card{\scenarios} \rfloor + 1 - \card{\infpartitions(\lbsol)}.
    \end{equation*}
\end{proposition}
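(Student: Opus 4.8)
The plan is to translate both hypotheses into statements about the \emph{number} of infeasible subsets, and then to show that each additional infeasible subset in the refinement costs at least one additional subset overall. The whole argument is combinatorial and relies only on Remark~\ref{remark:chance-cons-equi} and Definition~\ref{def:refinement}.

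First I would use Remark~\ref{remark:chance-cons-equi} to rewrite feasibility as a counting condition. Since $\lbsol\in\decvarset$ is feasible for Model~\eqref{eq:cclp-part} with $\partitions$, and the indicators are pinned down by the equality $\indvar_\partition=\boldone(\lbsol\in\probset^\partition)$, the chance constraint in the form~\eqref{eq:condition-removal} reads $\card{\feaspartitions(\lbsol)}\geq\card{\partitions}-\lfloor\tau\card{\scenarios}\rfloor$, which is equivalent to $\card{\infpartitions(\lbsol)}\leq\lfloor\tau\card{\scenarios}\rfloor$. Applying the same remark to the refinement $\refined$, which is itself a partition of the \emph{same} scenario set $\scenarios$, the infeasibility of $\lbsol$ for Model~\eqref{eq:cclp-part} with $\refined$ becomes $\card{\infrefined(\lbsol)}>\lfloor\tau\card{\scenarios}\rfloor$, i.e.\ $\card{\infrefined(\lbsol)}\geq\lfloor\tau\card{\scenarios}\rfloor+1$.

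The core of the argument, and the step I expect to be the main obstacle, is to establish the accounting inequality
\begin{equation*}
  \card{\refined}-\card{\partitions}\geq\card{\infrefined(\lbsol)}-\card{\infpartitions(\lbsol)}.
\end{equation*}
To prove it I would use Definition~\ref{def:refinement}: every $\newrefined\in\refined$ is contained in a unique parent $\partition\in\partitions$, so $\refined$ splits each $\partition$ into $n_\partition\geq1$ children, with $\card{\refined}=\sum_{\partition\in\partitions}n_\partition$. Because $\probset^\newrefined=\bigcap_{\scenario\in\newrefined}\probset^\scenario$, a child $\newrefined\subseteq\partition$ is infeasible for $\lbsol$ exactly when $\newrefined\cap\infscenarios(\lbsol)\neq\emptyset$; hence a feasible parent $\partition\in\feaspartitions(\lbsol)$ produces only feasible children, while an infeasible parent produces at most $n_\partition$ infeasible children. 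Writing $m_\partition$ for the number of infeasible children of $\partition$, we have $m_\partition=0$ on $\feaspartitions(\lbsol)$ and $m_\partition\leq n_\partition$ on $\infpartitions(\lbsol)$, so that
\begin{equation*}
  \card{\refined}=\sum_{\partition\in\partitions}n_\partition\geq\sum_{\partition\in\infpartitions(\lbsol)}m_\partition+\sum_{\partition\in\feaspartitions(\lbsol)}1=\card{\infrefined(\lbsol)}+\card{\feaspartitions(\lbsol)},
\end{equation*}
using $n_\partition\geq1$ on the feasible parents. Since $\card{\feaspartitions(\lbsol)}=\card{\partitions}-\card{\infpartitions(\lbsol)}$, rearranging yields the claimed inequality.

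Finally I would chain the three estimates. Combining the accounting inequality with $\card{\infrefined(\lbsol)}\geq\lfloor\tau\card{\scenarios}\rfloor+1$ gives
\begin{equation*}
  \card{\refined}-\card{\partitions}\geq\card{\infrefined(\lbsol)}-\card{\infpartitions(\lbsol)}\geq\lfloor\tau\card{\scenarios}\rfloor+1-\card{\infpartitions(\lbsol)}=\extrasize,
\end{equation*}
which is the desired bound. The delicate points to get right are the equivalence between a subset being infeasible and its containing an infeasible scenario, together with the observation that all children of a feasible parent remain feasible; these are precisely what make the per-parent bound $m_\partition\leq n_\partition$ collapse correctly when summed, and together with Proposition~\ref{prop:smallest-refinement} they show the bound $\extrasize$ is attained by Algorithm~\ref{alg:refinement}.
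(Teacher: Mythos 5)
Your proposal is correct and is essentially the paper's own argument: the key step in both is that a feasible parent subset can only produce feasible children under refinement (the paper states this as $\card{\feasrefined(\lbsol)}\geq\card{\feaspartitions(\lbsol)}$, which is exactly your accounting inequality rewritten in terms of feasible rather than infeasible counts), combined with Remark~\ref{remark:chance-cons-equi} to turn infeasibility for $\refined$ into a counting condition, followed by rearrangement. Your version merely spells out the parent--child counting more explicitly and works with the complementary (infeasible-subset) form of the same inequalities.
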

\begin{proof}
  It is not possible to refine subsets~$\partition\in\feaspartitions(\lbsol)$ into subsets~$\newrefined\in\infrefined(\lbsol)$ using only split operations. Hence, for any refinement~$\refined$ of partition~$\partitions$ we have~$\card{\feasrefined (\lbsol)} \geq \card{\feaspartitions (\lbsol)}$.  By assumption,~$\lbsol$ is infeasible for Model~\eqref{eq:cclp-part} with~$\refined$, we have
  \begin{equation}
    \card{\refined} - \lfloor \tau\card{\scenarios} \rfloor - 1 \geq \sum_{\partition\in\refined} \indvar_\partition(\lbsol) = \card{\feaspartitions(\lbsol)} = \card{\partitions} - \card{\infpartitions(\lbsol)}.\label{eq:condition-mu}
  \end{equation}
  Finally, by reorganizing terms in Equation~\eqref{eq:condition-mu} we get
  \begin{equation*}
        \card{\refined}-\card{\partitions}\geq  \lfloor \tau\card{\scenarios} \rfloor + 1 - \card{\infpartitions(\lbsol)}.
    \end{equation*}
\end{proof}

\rev{Given a point $\lbsol$ that is feasible for Model~\eqref{eq:cclp-part} with~$\partitions$ and infeasible for Model~\eqref{eq:cclp-reform}, Proposition~\ref{prop:lower-bound-part} provides a lower bound on the size of any refinement $\refined$ that renders the point infeasible for Model~\eqref{eq:cclp-part} with~$\refined$.}

\rev{\begin{corollary}
  \label{cor:min-size-ref}
  Let the point~$\lbsol$ be feasible for Model~\eqref{eq:cclp-part} with~$\partitions$ but infeasible for Model~\eqref{eq:cclp-reform}. Let $\refined$ be a refinement of $\partitions$ obtained by applying Algorithm~\ref{alg:refinement}. Then, $\refined$ is the smallest size refinement of~$\partitions$ such that~$\lbsol$ is infeasible for Model~\eqref{eq:cclp-part} with~$\refined$.
\end{corollary}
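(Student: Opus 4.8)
The plan is to combine the two preceding results: Proposition~\ref{prop:lower-bound-part} supplies a lower bound $\card{\refined}-\card{\partitions}\geq\extrasize$ valid for \emph{every} refinement that excludes $\lbsol$, while Proposition~\ref{prop:smallest-refinement}, together with the construction behind it, shows that Algorithm~\ref{alg:refinement} attains this bound exactly. Matching the two yields minimality, so the corollary reduces to a short juxtaposition once the behaviour of Algorithm~\ref{alg:refinement} is pinned down.

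First I would verify that Algorithm~\ref{alg:refinement} is well defined and terminates with a refinement of the asserted size. The while loop executes exactly $\extrasize$ times, since each pass replaces a single subset $\newrefined_1$ by the two subsets $\newrefinedleft,\newrefinedright$ and hence raises $\card{\refined}$ by one until $\card{\refined}=\card{\partitions}+\extrasize$. The one point needing justification is that Line~\ref{alg:line-refinement-selection} can always select a subset $\newrefined_1\in\infrefined(\lbsol)$ containing at least two infeasible scenarios. This is precisely what Equation~\eqref{eq:split-possible} guarantees: there are at least $\extrasize$ infeasible scenarios lying in subsets of $\partitions_2(\lbsol)$ that can be separated off, so each of the $\extrasize$ required splits can be performed without exhausting the supply of splittable subsets. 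Because every split keeps at least one infeasible scenario on each side, both $\newrefinedleft$ and $\newrefinedright$ belong to $\infrefined(\lbsol)$, so each iteration increases $\card{\infrefined(\lbsol)}$ by one; after $\extrasize$ iterations we reach $\card{\infrefined(\lbsol)}=\card{\infpartitions(\lbsol)}+\extrasize=\lfloor\tau\card{\scenarios}\rfloor+1$. The same reorganization as in the proof of Proposition~\ref{prop:smallest-refinement}, Equation~\eqref{eq:unrdored-infeasible}, then shows that $\lbsol$ violates Constraint~\eqref{eq:condition-removal} for $\refined$, i.e. $\lbsol$ is infeasible for Model~\eqref{eq:cclp-part} with $\refined$. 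Since Algorithm~\ref{alg:refinement} only ever splits existing subsets, its output is by Definition~\ref{def:refinement} a genuine refinement of $\partitions$.

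Second, I would invoke Proposition~\ref{prop:lower-bound-part}: any refinement $\refined'$ of $\partitions$ for which $\lbsol$ is infeasible in Model~\eqref{eq:cclp-part} satisfies $\card{\refined'}\geq\card{\partitions}+\extrasize$. The refinement $\refined$ produced by Algorithm~\ref{alg:refinement} is itself such a refinement and has $\card{\refined}=\card{\partitions}+\extrasize$, so it attains the lower bound. Consequently no excluding refinement can be strictly smaller than $\refined$, which is exactly the claim that $\refined$ is a smallest-size refinement of $\partitions$ excluding $\lbsol$.

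I expect the only real obstacle to be the bookkeeping in the first step, namely confirming that the greedy splitting in Algorithm~\ref{alg:refinement} never gets stuck. This rests entirely on the counting inequality~\eqref{eq:split-possible} already established and on tracking that each split increments the count of infeasible subsets by one; everything else is a direct comparison of the matching upper and lower bounds and requires no further computation.
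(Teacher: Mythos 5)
Your proposal is correct and follows exactly the paper's route: the paper's own proof of this corollary is precisely the juxtaposition of Proposition~\ref{prop:smallest-refinement} (Algorithm~\ref{alg:refinement} attains size $\card{\partitions}+\extrasize$ while excluding $\lbsol$) with Proposition~\ref{prop:lower-bound-part} (no excluding refinement can be smaller). Your additional bookkeeping—checking via Equation~\eqref{eq:split-possible} that the greedy splitting never gets stuck and that each split adds exactly one infeasible subset—is a re-derivation of the argument already contained in the paper's proof of Proposition~\ref{prop:smallest-refinement}, so it is sound but not a departure from the paper's approach.
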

\begin{proof}
  By Proposition~\ref{prop:smallest-refinement}, Algorithm~\ref{alg:refinement} generates a refinement $\refined$ of size $\card{\refined} = \card{\partitions} + \lfloor \tau\card{\scenarios} \rfloor + 1 - \card{\infpartitions(\lbsol)}$. Proposition~\ref{prop:lower-bound-part} confirms this is a lower bound.
\end{proof}}

\rev{Corollary~\ref{cor:min-size-ref} shows that Algorithm~\ref{alg:refinement} produces the smallest size refinement~$\refined$ of~$\partitions$ such that~$\lbsol$ is infeasible for Model~\eqref{eq:cclp-part} on~$\refined$.} The following theorem is the main result used in Algorithm~\ref{alg:adaptive-partitioning} for obtaining refinements~$\refined$ such that~$\objval^{\LB}(\refined) > \objval^{\LB}(\partitions)$.
\begin{theorem}
  \label{thm:strictly-incr-lower}
    Let the point~$\lbsol$ be an optimal solution of Model~\eqref{eq:cclp-part} with~$\partitions$. If~$\lbsol$ is unique and infeasible for Model~\eqref{eq:cclp-reform}, then the refinement~$\refined$ of~$\partitions$ created by applying Algorithm~\ref{alg:refinement} is the smallest size refinement of~$\partitions$ such that
    \begin{equation*}
        \label{eq:increasing-bound-split}
        \objval^{\LB}(\refined) > \objval^{\LB}(\partitions).
    \end{equation*}
\end{theorem}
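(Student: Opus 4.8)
The plan is to establish the equivalence \enquote{the lower bound strictly increases} $\Longleftrightarrow$ \enquote{the point $\lbsol$ is excluded from the reduced feasible set}, and then read off both the strict increase and the minimality from the three results already proved. First I would fix notation: let $\feasregion(\partitions)\subseteq\decvarset$ be the set of points feasible for Model~\eqref{eq:cclp-part} with $\partitions$, and $\feasregion(\refined)$ the analogous set for $\refined$. Proposition~\ref{prop:refined-relax} tells us that Model~\eqref{eq:cclp-part} with $\partitions$ relaxes Model~\eqref{eq:cclp-part} with $\refined$, so $\feasregion(\refined)\subseteq\feasregion(\partitions)$ and $\objval^{\LB}(\refined)\geq\objval^{\LB}(\partitions)$. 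Since $\lbsol$ is by hypothesis the \emph{unique} optimal solution of Model~\eqref{eq:cclp-part} with $\partitions$, we have $\objval^{\LB}(\partitions)=\objfun(\lbsol)$ and, crucially, $\objfun(\decvar)>\objfun(\lbsol)$ for every $\decvar\in\feasregion(\partitions)$ with $\decvar\neq\lbsol$.

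The core step is the strict increase. Because $\lbsol$ is infeasible for Model~\eqref{eq:cclp-reform}, Proposition~\ref{prop:smallest-refinement} guarantees that the refinement $\refined$ produced by Algorithm~\ref{alg:refinement} makes $\lbsol$ infeasible for Model~\eqref{eq:cclp-part} with $\refined$, i.e. $\lbsol\notin\feasregion(\refined)$. Let $\ubar{x}$ denote an optimal solution of Model~\eqref{eq:cclp-part} with $\refined$; it lies in $\feasregion(\refined)\subseteq\feasregion(\partitions)$ and satisfies $\ubar{x}\neq\lbsol$. The uniqueness of $\lbsol$ then forces $\objfun(\ubar{x})>\objfun(\lbsol)$, whence $\objval^{\LB}(\refined)=\objfun(\ubar{x})>\objval^{\LB}(\partitions)$. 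This is exactly the claimed strict improvement for the refinement coming out of Algorithm~\ref{alg:refinement}.

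It then remains to argue minimality, for which I would use the converse implication. If a refinement $\refined'$ does \emph{not} exclude $\lbsol$, that is $\lbsol\in\feasregion(\refined')$, then $\objval^{\LB}(\refined')\leq\objfun(\lbsol)=\objval^{\LB}(\partitions)$, which combined with the relaxation inequality of Proposition~\ref{prop:refined-relax} gives $\objval^{\LB}(\refined')=\objval^{\LB}(\partitions)$ and hence no strict gain. Contrapositively, \emph{any} refinement that strictly increases the lower bound must render $\lbsol$ infeasible, and by Proposition~\ref{prop:lower-bound-part} (equivalently Corollary~\ref{cor:min-size-ref}) every such refinement has size at least $\card{\partitions}+\extrasize=\card{\refined}$. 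Since the refinement $\refined$ produced by Algorithm~\ref{alg:refinement} attains exactly this size while achieving the strict increase, it is the smallest-size refinement with $\objval^{\LB}(\refined)>\objval^{\LB}(\partitions)$.

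The step I expect to require the most care is the strict inequality itself. It rests entirely on the uniqueness hypothesis, together with the optimal value of the refined model being \emph{attained}, so that the minimizer $\ubar{x}$ is a genuine point of $\feasregion(\refined)$ rather than an infimizing limit; I would note that the (non-degenerate) case $\feasregion(\refined)=\emptyset$ also yields $\objval^{\LB}(\refined)=+\infty>\objval^{\LB}(\partitions)$, so attainment is the only situation needing justification. Without uniqueness, a second optimizer of the coarse model could survive the refinement and leave the bound unchanged, so I would emphasize that uniqueness of $\lbsol$ is precisely what converts \enquote{$\lbsol$ is excluded} into a strict gain; the minimality half is then bookkeeping on top of Propositions~\ref{prop:smallest-refinement} and~\ref{prop:lower-bound-part}.
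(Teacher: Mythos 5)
Your proof is correct and follows essentially the same route as the paper's: Proposition~\ref{prop:refined-relax} for monotonicity of the bound, Proposition~\ref{prop:smallest-refinement} (via Algorithm~\ref{alg:refinement}) to exclude $\lbsol$, uniqueness to turn exclusion into a strict increase, and Proposition~\ref{prop:lower-bound-part}/Corollary~\ref{cor:min-size-ref} for minimality. If anything, your write-up is more careful than the paper's: you make explicit the converse step (a refinement that keeps $\lbsol$ feasible cannot strictly improve the bound), which is needed to transfer minimality from \enquote{refinements excluding $\lbsol$} to \enquote{refinements with strict gain}, and you flag the attainment of the refined optimum as the one hidden hypothesis --- both points the paper leaves implicit (its proof also miscites the merger results \ref{prop:refinement-bound} and \ref{coro:lbsol-feasible-part} where \ref{prop:refined-relax} and \ref{cor:min-size-ref} are meant, which you correctly avoid).
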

\begin{proof}
  Proposition~\ref{prop:refinement-bound} proves that any refinement~$\refined$ of~$\partitions$ is such that Model~\eqref{eq:cclp-part} with~$\partitions$ is a relaxation of Model~\eqref{eq:cclp-part} with~$\refined$. Then, if~$\refined$ is created by applying Algorithm~\ref{alg:refinement} we know by Proposition~\ref{prop:smallest-refinement} that~$\lbsol$ is not feasible for Model~\eqref{eq:cclp-part} with~$\refined$. Moreover, if~$\lbsol$ is the unique point such that solving Model~\eqref{eq:cclp-part} with~$\partitions$ returns the value~$\objval^\LB (\partitions)$, it follows that
  \begin{equation*}
    \objval^{\LB}(\refined) > \objval^{\LB}(\partitions).
  \end{equation*}
  In addition, \rev{Corollary~\ref{coro:lbsol-feasible-part}} proves that~$\refined$ is of minimal size.
\end{proof}


\section{On Constructing Minimal Size Mergers}
\label{sec:merge}
Adaptive partitioning methods based uniquely on refinement operations strictly increase the size of the partition~$\partitions$ with the iterations. As a consequence, the number of indicator variables~$\indvar$ and the time required to solve Model~\eqref{eq:cclp-reform-bigm} increase over time. We suggest adding merging operations to address this issue.

\begin{definition}
    \label{def:merge}
    A partition~$\merged$ of a set~$\scenarios$ is a merger of a partition~$\partitions$ if for any~$\partition \in \partitions$ there exists~$\newmerged \in \merged $ such that $ \partition \subseteq \newmerged$.
\end{definition}
In general, it is not possible to perform a merge without increasing the lower bound obtained by solving Model~\eqref{eq:cclp-part}. This is stated in the following corollary.

\begin{corollary}
    \label{prop:merge-worse}
    For any merger~$\merged$ of a partition~$\partitions$, we have~$\objval^{\LB}(\merged) \le \objval^{\LB}(\partitions)$.
\end{corollary}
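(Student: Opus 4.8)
The plan is to exploit the fact that merging and refinement are inverse relations between partitions, so that this corollary follows immediately from Proposition~\ref{prop:refined-relax} with the roles of the two partitions interchanged.

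First I would make the duality precise. By Definition~\ref{def:merge}, saying that $\merged$ is a merger of $\partitions$ means that for every $\partition \in \partitions$ there exists some $\newmerged \in \merged$ with $\partition \subseteq \newmerged$. I would then compare this verbatim with Definition~\ref{def:refinement}: a partition is a refinement of another precisely when each of its subsets is contained in a subset of the coarser partition. Reading the merger condition in this light, it states exactly that every subset of $\partitions$ sits inside a subset of $\merged$, which is the definition of $\partitions$ being a refinement of $\merged$. Hence the single observation to record is the equivalence ``$\merged$ is a merger of $\partitions$'' $\iff$ ``$\partitions$ is a refinement of $\merged$''.

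Second, I would apply Proposition~\ref{prop:refined-relax} directly, but with $\partitions$ playing the role of the refinement and $\merged$ playing the role of the base partition. That proposition gives $\objval^{\LB}(\text{refinement}) \geq \objval^{\LB}(\text{base})$, so substituting yields $\objval^{\LB}(\partitions) \geq \objval^{\LB}(\merged)$, which is the claimed inequality $\objval^{\LB}(\merged) \le \objval^{\LB}(\partitions)$. No new estimate is needed; the chain of equalities and the reformulation as a standalone CCSP carried out inside the proof of Proposition~\ref{prop:refined-relax} already cover the merged case once the roles are swapped.

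I do not expect a genuine obstacle here, which is why the statement is phrased as a corollary rather than a proposition. The only point requiring care is bookkeeping: making sure the containment direction in Definition~\ref{def:merge} is matched against the containment direction in Definition~\ref{def:refinement} so that $\partitions$, and not $\merged$, is correctly identified as the finer partition. Once that identification is stated cleanly, the inequality is a one-line consequence of Proposition~\ref{prop:refined-relax}.
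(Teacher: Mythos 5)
Your proposal is correct and takes exactly the same route as the paper: the paper's proof also consists of observing that, by Definition~\ref{def:merge}, $\merged$ being a merger of $\partitions$ means $\partitions$ is a refinement of $\merged$, and then invoking Proposition~\ref{prop:refined-relax} with the roles swapped to get $\objval^{\LB}(\partitions) \geq \objval^{\LB}(\merged)$. Your version merely spells out the containment-direction bookkeeping that the paper leaves implicit.
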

\begin{proof}
  The proof is a direct consequence of Definition~\ref{def:merge}. If~$\merged$ is a merger of~$\partitions$ then~$\partitions$ is a refinement of~$\merged$ and Proposition~\ref{prop:refined-relax} holds.
\end{proof}
Corollary~\ref{prop:merge-worse} suggests that a merging operation should not be performed in Algorithm~\ref{alg:adaptive-partitioning} if we request a strict increasing lower-bound. \rev{Still, studying merging operations can provide both algorithmic benefits and structural insights. First, we show that an \enquote{a-posteriori} merge may be performed in the \APM, resulting in strictly tighter lower bounds. Second, we investigate the conditions under which mergers can be constructed such that~$\objval^{\LB}(\merged) = \objval^{\LB}(\partitions)$. This provides general insight into when small partitions that yield optimal solutions are likely to exist.}

\subsection{Solution Exclusion by Merging}
We now study how to perform efficient merging operations. We prove that, if certain conditions are satisfied, given a refinement~$\refined$ of a partition~$\partitions$, we can construct a merger~$\merged$ of~$\refined$ such that~$\objval^{\LB} ( \merged) > \objval^{\LB} ( \partitions)$.
To carry out this proof, we proceed in the following way. First, in Propositions~\ref{prop:merge-bound} and~\ref{prop:refinement-bound} we identify conditions that guarantee~$\objval^{\LB} ( \merged) \geq \objval^{\LB} ( \partitions)$. Second, in Corollary~\ref{coro:lbsol-feasible-part}, we show that, if additional assumptions are made on~$\merged$, then~$\objval^{\LB} ( \merged) > \objval^{\LB} ( \partitions)$. Third, in Proposition~\ref{prop:merger-construction}, we propose a procedure for creating a merger that satisfies the aforementioned conditions. Fourth, in Theorem~\ref{thm:strict-increase-merge}, we use the previous results to show how a merger~$\merged$ satisfying~$\objval^{\LB} ( \merged) > \objval^{\LB} ( \partitions)$ is obtained in Algorithm~\ref{alg:adaptive-partitioning}. \rev{Finally, in Proposition~\ref{prop:cycling} we show that any infeasible point that has an objective value smaller or equal to $\objval^{\LB} ( \partitions)$ cannot become feasible after a merge operation. This result implies that no cycling between two solutions with the same objective function value can occur when merge operations are considered.}

To simplify the presentation, we use the following notation \rev{throughout the paper, unless stated otherwise}. Let~$\partitions$ be a partition of the scenario set~$\scenarios$ and let~$\refined$ be a refinement of~$\partitions$. Let~$\merged$ be a merger of~$\refined$. Let~$\newsets^A$ be the subsets of an arbitrary partition $\mathcal{A}$ of $\scenarios$ that are not in~$\partitions$, i.e.,~$\newsets^{\mathcal{A}} = \mathcal{A} \setminus \partitions$. Further, let~$\singlecost_\partition$ denote the minimum cost of the solution that satisfies the constraint of a subset~$\partition \in \partitions$, \ie,
\begin{equation}
    \singlecost_\partition = \min_\decvar\{ \objfun(\decvar): \decvar \in  \probset^\partition\cap\decvarset\}. \label{eq:single-cost-definition}
  \end{equation}

We now identify conditions that ensure~$\objval^\LB (\merged) \geq \objval^\LB (\partitions)$.
\begin{proposition}
    \label{prop:merge-bound}
     \rev{Let $\refined$ be a refinement of a partition $\partitions$. If~$\merged$ is a merger of $\refined$ and}~$\card{\merged} \geq \card{\partitions}$, then
    \begin{equation*}
        \label{eq:merge-bound}
        \objval^{\LB} ( \merged) \geq \min \left\lbrace \objval^{\LB}(\partitions), \min_{\newmerged\in\newsets^\merged} \singlecost_\newmerged \right\rbrace.
    \end{equation*}
\end{proposition}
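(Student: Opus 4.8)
The plan is to take an optimal solution $\hat\decvar$ of Model~\eqref{eq:cclp-part} with $\merged$, so that $\objval^{\LB}(\merged)=\objfun(\hat\decvar)$, and to bound $\objfun(\hat\decvar)$ from below by one of the two terms of the claimed minimum, according to whether $\hat\decvar$ is feasible for a newly created subset. The preliminary step is to invoke Remark~\ref{remark:chance-cons-equi}, which under Assumption~\ref{ass:equiprobable} recasts feasibility combinatorially: for any partition $\mathcal A$ of $\scenarios$, a point is feasible for Model~\eqref{eq:cclp-part} with $\mathcal A$ if and only if it is infeasible for at most $\lfloor\threshold\card{\scenarios}\rfloor$ of the subsets of $\mathcal A$. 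The crucial feature is that this threshold is the same for $\partitions$ and $\merged$, since it depends only on $\scenarios$ and $\threshold$; in particular $\card{\infmerged(\hat\decvar)}\leq\lfloor\threshold\card{\scenarios}\rfloor$ because $\hat\decvar$ is feasible for that model.

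Next I would split into two cases. If $\hat\decvar$ is feasible for some new subset $\newmerged\in\newsets^\merged=\merged\setminus\partitions$, then $\hat\decvar\in\probset^\newmerged\cap\decvarset$, so by the definition~\eqref{eq:single-cost-definition} of $\singlecost_\newmerged$ we get $\objfun(\hat\decvar)\geq\singlecost_\newmerged\geq\min_{\newmerged'\in\newsets^\merged}\singlecost_{\newmerged'}$, and the claim follows immediately in this case.

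The remaining case, where $\hat\decvar$ is infeasible for every subset in $\newsets^\merged$, is where the hypothesis $\card{\merged}\geq\card{\partitions}$ enters, and there I would show that $\hat\decvar$ is in fact feasible for Model~\eqref{eq:cclp-part} with $\partitions$, which yields $\objfun(\hat\decvar)\geq\objval^{\LB}(\partitions)$. I would sort the subsets into the common ones $\partitions\cap\merged$, those only in $\partitions$, and those only in $\merged$, using that the feasibility of $\hat\decvar$ for a common subset $\partition$ is identical in both models because $\probset^\partition$ depends only on $\partition$. Writing $a$, $b$, $c$ for the numbers of subsets infeasible for $\hat\decvar$ among $\partitions\cap\merged$, $\partitions\setminus\merged$, and $\merged\setminus\partitions$ respectively, I have $\card{\infpartitions(\hat\decvar)}=a+b$ and $\card{\infmerged(\hat\decvar)}=a+c$. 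Since every new subset is infeasible in this case, $c=\card{\newsets^\merged}$, while $\card{\merged}\geq\card{\partitions}$ rewrites as $\card{\newsets^\merged}\geq\card{\partitions\setminus\merged}\geq b$; chaining these gives $a+b\leq a+c=\card{\infmerged(\hat\decvar)}\leq\lfloor\threshold\card{\scenarios}\rfloor$, so $\hat\decvar$ is feasible for Model~\eqref{eq:cclp-part} with $\partitions$.

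The main obstacle is exactly this counting step: converting the size hypothesis into the inequality $b\leq c$ between infeasible-subset counts. The delicate point is that the hypothesis only bounds the total cardinality $\card{\partitions\setminus\merged}$, not the infeasible count $b$ directly, so one must use the case assumption that all new subsets are infeasible to push $c$ up to its maximal value $\card{\newsets^\merged}$ and close the chain of inequalities. Once both cases are in hand, they combine to give $\objval^{\LB}(\merged)=\objfun(\hat\decvar)\geq\min\{\objval^{\LB}(\partitions),\ \min_{\newmerged\in\newsets^\merged}\singlecost_\newmerged\}$, as claimed.
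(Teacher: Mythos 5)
Your proof is correct and follows essentially the same route as the paper's: the same two-case split on whether the optimal solution of Model~\eqref{eq:cclp-part} with~$\merged$ satisfies some new subset in~$\newsets^\merged$, with the feasible case bounded by~$\singlecost_\newmerged$ and the infeasible case reduced to feasibility for~$\partitions$. In fact, your counting argument via Remark~\ref{remark:chance-cons-equi} (comparing infeasible-subset counts $a+b \leq a+c$ using $\card{\merged}\geq\card{\partitions}$) supplies the justification that the paper's Case~1 asserts with only a ``we know that,'' so your write-up is a more complete version of the same proof.
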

\begin{proof}
    Let~$\lbsol^\merged$ be the optimal solution of Model~\eqref{eq:cclp-part} with~$\merged$. We distinguish two cases.\\
        \textit{Case 1}. If~$\indvar_{\rev{\newmerged}}(\lbsol^\merged)=0$ for all~$\rev{\newmerged}\in\newsets^\merged$ we know that~$\lbsol^\merged$ is also feasible for Model~\eqref{eq:cclp-part} with partition~$\partitions$ and thus ~$\objval^\LB(\merged) \geq \objval^\LB(\partitions)$.\\
        \textit{Case 2}. If there exists a~$\rev{\newmerged}\in\newsets^\merged$ such that~$\indvar_{\rev{\newmerged}}(\lbsol^\merged)=1$ it holds that~$\objval^\LB(\merged) \geq \singlecost_\newmerged$.
\end{proof}

Furthermore, we present a simpler method to evaluate situations where a merge operation guarantees that the lower bound is non-decreasing.
\begin{proposition}
    \label{prop:refinement-bound}
    \rev{Let $\refined$ be a refinement of a partition $\partitions$. If~$\merged$ is a merger of $\refined$ and}~$\card{\merged} \geq \card{\partitions}$, then
    \begin{equation*}
        \objval^{\LB}(\merged) \geq \min \left\lbrace \objval^{\LB}(\partitions), \min_{\newrefined\in\newsets^{\refined}}\singlecost_\newrefined \right\rbrace.
    \end{equation*}
\end{proposition}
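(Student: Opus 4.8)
The plan is to reduce Proposition~\ref{prop:refinement-bound} to the already-established Proposition~\ref{prop:merge-bound} by comparing the two lower bounds that appear on their right-hand sides. Both propositions share the term $\objval^{\LB}(\partitions)$, so it suffices to show that the second term in Proposition~\ref{prop:merge-bound}, namely $\min_{\newmerged\in\newsets^\merged}\singlecost_\newmerged$, is bounded below by the corresponding term here, $\min_{\newrefined\in\newsets^{\refined}}\singlecost_\newrefined$. Once this inequality between the two minima is in hand, the desired bound follows immediately, since $\min\{a, b\} \geq \min\{a, c\}$ whenever $b \geq c$.

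First I would fix an arbitrary new subset $\newmerged\in\newsets^\merged = \merged\setminus\partitions$ and analyze its structure. Since $\merged$ is a merger of $\refined$, Definition~\ref{def:merge} guarantees that every subset of $\refined$ is contained in some subset of $\merged$; in particular, $\newmerged$ is the union of one or more subsets of $\refined$. The key observation is that the feasible set associated with a merged subset is the intersection of the feasible sets of its constituents: if $\newmerged = \bigcup_{j} \newrefined_j$ with each $\newrefined_j \in \refined$, then $\probset^{\newmerged} = \bigcap_{\scenario\in\newmerged}\probset^{\scenario} = \bigcap_{j}\probset^{\newrefined_j}$. Consequently the feasible region over which $\singlecost_\newmerged$ minimizes is contained in the feasible region defining $\singlecost_{\newrefined_j}$ for each constituent $\newrefined_j$, so by monotonicity of the minimum over a shrinking feasible set we obtain $\singlecost_\newmerged \geq \singlecost_{\newrefined_j}$ for every $j$.

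The step requiring care is relating the constituents $\newrefined_j$ back to the set $\newsets^{\refined} = \refined\setminus\partitions$ that indexes the minimum in the statement. I would argue that since $\newmerged \in \merged\setminus\partitions$ is genuinely a new set (not already present in $\partitions$), at least one of its constituents $\newrefined_j$ must itself lie in $\refined\setminus\partitions$: if every constituent belonged to $\partitions$, then either $\newmerged$ would consist of a single subset already in $\partitions$, contradicting $\newmerged\notin\partitions$, or it would merge two or more subsets of $\partitions$, which combined with the size condition $\card{\merged}\geq\card{\partitions}$ needs to be ruled out. Picking such a constituent $\newrefined_{j^*}\in\newsets^{\refined}$ gives $\singlecost_\newmerged \geq \singlecost_{\newrefined_{j^*}} \geq \min_{\newrefined\in\newsets^{\refined}}\singlecost_\newrefined$. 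Taking the minimum over all $\newmerged\in\newsets^\merged$ then yields $\min_{\newmerged\in\newsets^\merged}\singlecost_\newmerged \geq \min_{\newrefined\in\newsets^{\refined}}\singlecost_\newrefined$, and chaining with Proposition~\ref{prop:merge-bound} completes the argument.

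The main obstacle I anticipate is the edge case handling in the previous paragraph: carefully verifying that every new merged subset contains a constituent from $\newsets^{\refined}$ rather than being assembled purely from old subsets of $\partitions$. This is exactly where the hypothesis $\card{\merged}\geq\card{\partitions}$ must do its work, and I would want to confirm whether the intended reading is that the merger acts only on the refined subsets (leaving subsets of $\partitions$ that survived into $\refined$ untouched), since that interpretation makes the constituent-selection step clean. If instead a merged subset could in principle combine genuinely old subsets, the claim would need the size condition to forbid such behavior, and I would spell out that counting argument explicitly rather than leaving it implicit.
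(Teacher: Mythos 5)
Your overall route---reducing to Proposition~\ref{prop:merge-bound} by proving $\min_{\newmerged\in\newsets^\merged}\singlecost_\newmerged \geq \min_{\newrefined\in\newsets^\refined}\singlecost_\newrefined$---is in fact the route the paper's own (very terse) proof takes, and the step you flag as needing care is a genuine gap: the resolution you hope for is not available. The hypothesis $\card{\merged}\geq\card{\partitions}$ does \emph{not} rule out merged subsets assembled purely from old subsets of $\partitions$. Concretely, let $\partitions=\{\subpart_1,\dots,\subpart_6\}$, let $\refined$ split $\subpart_6$ into three pieces, and let $\merged$ merge $\subpart_1\cup\subpart_2$ while keeping all other subsets of $\refined$ untouched; then $\card{\merged}=7\geq 6=\card{\partitions}$, yet the new subset $\subpart_1\cup\subpart_2\in\newsets^\merged$ has all of its constituents in $\partitions$. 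Its cost $\singlecost_{\subpart_1\cup\subpart_2}$ bears no relation to the costs of the pieces of $\subpart_6$: it can be strictly smaller than $\min_{\newrefined\in\newsets^\refined}\singlecost_\newrefined$, and when $\card{\partitions}-\lfloor\threshold\card{\scenarios}\rfloor>2$ it can also be strictly smaller than $\objval^\LB(\partitions)$, since a point of $\probset^{\subpart_1}\cap\probset^{\subpart_2}$ satisfies only two subsets of $\partitions$ and need not be feasible for Model~\eqref{eq:cclp-part} with $\partitions$. Hence the inequality between the two minima is false in general, and Proposition~\ref{prop:merge-bound} used as a black box cannot deliver the claimed bound. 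Nor can you appeal to a restricted reading of Definition~\ref{def:merge}: even the paper's own Algorithm~\ref{alg:merge} may group together subsets that all belong to $\partitions$.

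The proposition is nevertheless true; the repair is to redo the case analysis of Proposition~\ref{prop:merge-bound} on the optimal solution $\lbsol^\merged$ of Model~\eqref{eq:cclp-part} with $\merged$, instead of comparing the two right-hand sides term by term. If some subset $\newmerged\in\merged$ with $\indvar_\newmerged(\lbsol^\merged)=1$ contains a constituent $\newrefined\in\newsets^\refined$, then $\probset^\newmerged\subseteq\probset^\newrefined$ gives $\objfun(\lbsol^\merged)\geq\singlecost_\newrefined\geq\min_{\newrefined\in\newsets^\refined}\singlecost_\newrefined$. Otherwise, every satisfied subset of $\merged$ is a union of subsets of $\partitions$, and each such constituent is itself satisfied by $\lbsol^\merged$; since the subsets of $\merged$ are disjoint, $\lbsol^\merged$ satisfies at least as many subsets of $\partitions$ as it satisfies subsets of $\merged$, i.e., at least $\card{\merged}-\lfloor\threshold\card{\scenarios}\rfloor\geq\card{\partitions}-\lfloor\threshold\card{\scenarios}\rfloor$ of them by Remark~\ref{remark:chance-cons-equi}, so $\lbsol^\merged$ is feasible for Model~\eqref{eq:cclp-part} with $\partitions$ and $\objfun(\lbsol^\merged)\geq\objval^\LB(\partitions)$. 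This is where the size hypothesis actually does its work, exactly as in Case~1 of the proof of Proposition~\ref{prop:merge-bound}; it is not there to forbid old-with-old mergers. In short, your proposal mirrors the paper's chaining argument (which is itself silent on this point), and your instinct that the old-only merged sets are the crux is right, but closing the gap requires the solution-based dichotomy above rather than the counting argument you sketch.
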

\begin{proof}
    Since the partition~$\merged$ is a merger of~$\refined$,
    \begin{equation*}
        \label{eq:inter-min-sing-cost}
        \min_{\newmerged\in\merged} \singlecost_\newmerged \geq \min_{\newrefined\in\refined} \singlecost_\newrefined.
    \end{equation*}
    It follows from Proposition~\ref{prop:merge-bound} that
    \begin{equation*}
        \objval^{\LB}(\merged)
        \geq \min (\objval^{\LB}(\partitions), \min_{\newmerged\in\newsets^{\merged}}(\singlecost_\newmerged)) \geq \min (\objval^{\LB}(\partitions), \min_{\newrefined\in\newsets^{\refined}}(\singlecost_\newrefined)).
    \end{equation*}
\end{proof}
Propositions~\ref{prop:merge-bound} and~\ref{prop:refinement-bound} describe straightforward conditions on the value that~$\singlecost_\partition$ may take to guarantee~$\objval^\LB (\merged) \geq \objval^\LB (\partitions)$. If~$\min_{\partition\in\newsets^{\refined}}\singlecost_\partition \geq \objval^\LB (\partitions)$, Proposition~\ref{prop:refinement-bound} highlights that~$\objval^\LB (\merged) \geq \objval^\LB (\partitions)$ for any~$\merged$ obtained by merging the refinement~$\refined$. Moreover, if~$\singlecost_\newrefined \leq \objval^\LB (\partitions)$ for at least one~$\newrefined \in \newsets^\refined$ it may be possible to construct a new partition~$\merged$ such that~$\min_{\newmerged\in\newsets^{\merged}}\singlecost_\newmerged \geq \objval^\LB (\partitions)$.
\begin{corollary}
  \label{coro:lbsol-feasible-part}
  Let~$\lbsol$ be the optimal solution of Model~\eqref{eq:cclp-part} with~$\partitions$. Let~$\merged$ be such that~$\card{\merged} \geq \card{\partitions}$. Let~$\min_{\newmerged\in\newsets^\merged} \singlecost_\newmerged > \objval^{\LB}(\partitions)$ or~$\min_{\newrefined\in\newsets^\refined} \singlecost_\newrefined > \objval^{\LB}(\partitions)$. If~$\lbsol$ is unique and infeasible for Model~\eqref{eq:cclp-part} with~$\merged$, then
  \begin{equation*}
    \objval^{\LB}(\merged) > \objval^{\LB}(\partitions).
  \end{equation*}
\end{corollary}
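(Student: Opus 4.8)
The plan is to first establish the weak inequality $\objval^{\LB}(\merged) \geq \objval^{\LB}(\partitions)$ and then upgrade it to a strict one using the uniqueness of $\lbsol$ together with its infeasibility for Model~\eqref{eq:cclp-part} with~$\merged$. For the weak inequality I would simply invoke Proposition~\ref{prop:merge-bound} when the hypothesis $\min_{\newmerged\in\newsets^\merged} \singlecost_\newmerged > \objval^{\LB}(\partitions)$ holds, and Proposition~\ref{prop:refinement-bound} when instead $\min_{\newrefined\in\newsets^\refined} \singlecost_\newrefined > \objval^{\LB}(\partitions)$ holds. In either case the relevant proposition bounds $\objval^{\LB}(\merged)$ from below by $\min\{\objval^{\LB}(\partitions), (\cdot)\}$, where the second argument strictly exceeds $\objval^{\LB}(\partitions)$ by assumption, so the minimum equals $\objval^{\LB}(\partitions)$.

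To obtain the strict inequality I would argue by contradiction, assuming $\objval^{\LB}(\merged) = \objval^{\LB}(\partitions)$. Let $\lbsol^\merged$ denote an optimal solution of Model~\eqref{eq:cclp-part} with~$\merged$, so that $\objfun(\lbsol^\merged) = \objval^{\LB}(\partitions)$. The key step is to show that no newly created subset can be active at $\lbsol^\merged$, that is, $\indvar_\newmerged(\lbsol^\merged) = 0$ for all $\newmerged \in \newsets^\merged$. Indeed, if some $\newmerged \in \newsets^\merged$ had $\indvar_\newmerged(\lbsol^\merged) = 1$, then $\lbsol^\merged \in \probset^\newmerged \cap \decvarset$ would force $\objfun(\lbsol^\merged) \geq \singlecost_\newmerged \geq \min_{\newmerged'\in\newsets^\merged}\singlecost_{\newmerged'}$; under either hypothesis this quantity strictly exceeds $\objval^{\LB}(\partitions)$, using in the second case the inequality $\min_{\newmerged\in\newsets^\merged}\singlecost_\newmerged \geq \min_{\newrefined\in\newsets^\refined}\singlecost_\newrefined$ established in the proof of Proposition~\ref{prop:refinement-bound}. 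This contradicts $\objfun(\lbsol^\merged) = \objval^{\LB}(\partitions)$.

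With all new subsets inactive, Case~1 of the proof of Proposition~\ref{prop:merge-bound} applies and guarantees that $\lbsol^\merged$ is feasible for Model~\eqref{eq:cclp-part} with~$\partitions$. Since it attains the objective value $\objval^{\LB}(\partitions)$, it is in fact an optimal solution of that model, so uniqueness of $\lbsol$ forces $\lbsol^\merged = \lbsol$. This is the desired contradiction: $\lbsol^\merged$ is feasible for Model~\eqref{eq:cclp-part} with~$\merged$, being its optimal solution, whereas $\lbsol$ was assumed infeasible for that model. Hence equality is impossible and $\objval^{\LB}(\merged) > \objval^{\LB}(\partitions)$.

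I expect the main obstacle to be the bookkeeping around the disjunctive hypothesis: one must verify that the cost-based argument ruling out active new subsets goes through uniformly whether the controlling bound is stated over $\newsets^\merged$, as in Proposition~\ref{prop:merge-bound}, or over $\newsets^\refined$, as in Proposition~\ref{prop:refinement-bound}. This is precisely where the monotonicity $\singlecost_\newmerged \geq \min_{\newrefined\in\refined}\singlecost_\newrefined$, stemming from $\probset^\newmerged \subseteq \probset^\newrefined$, is needed. Once that reduction is in place, the uniqueness-plus-infeasibility contradiction is immediate and the proof is essentially a short assembly of the preceding results.
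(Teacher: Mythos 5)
Your proposal is correct and follows essentially the same route as the paper's proof: the paper performs a direct two-case analysis on whether some $\newmerged\in\newsets^\merged$ is active at the optimum $\lbsol^\merged$ of Model~\eqref{eq:cclp-part} with~$\merged$, using the cost chain $\singlecost_{\newmerged'}\geq\min_{\newmerged\in\newsets^\merged}\singlecost_\newmerged\geq\min_{\newrefined\in\newsets^\refined}\singlecost_\newrefined$ in the active case and the uniqueness-plus-infeasibility argument in the inactive case, exactly as you do. Your only difference is cosmetic — you wrap these same two cases inside a proof by contradiction after first extracting the weak inequality from Propositions~\ref{prop:merge-bound} and~\ref{prop:refinement-bound} — so the mathematical content is identical.
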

\begin{proof}

Let~$\lbsol^\merged$ be the optimal solution of Model~\eqref{eq:cclp-part} with~$\merged$. To prove the statement we consider two cases.\\
    \textit{Case 1}. Let~$\indvar_{\newmerged'}(\lbsol^\merged)=0$ for all~$\newmerged'\in\newsets^\merged$. We know that~$\lbsol^\merged$ is also feasible for Model~\eqref{eq:cclp-part} with partition~$\partitions$. However, the point~$\lbsol$ is unique and not feasible for Model~\eqref{eq:cclp-part} with partition~$\merged$, so~$\objval^\LB(\merged)$ cannot be equal to~$\objval^{\LB}(\partitions)$, and~$\objval^{\LB}(\merged) > \objval^{\LB}(\partitions)$. \\
    \textit{Case 2}. Let a subset~$\newmerged'\in\newsets^\merged$ exist such that~$\indvar_{\newmerged'}(\lbsol^\merged)=1$. If~$\min_{\newmerged\in\newsets^\merged} \singlecost_\newmerged > \objval^{\LB}(\partitions)$ then~$\objval^\LB(\merged)\geq \singlecost_{\newmerged'} \geq \min_{\newmerged\in\newsets^\merged} \singlecost_\newmerged > \objval^{\LB}(\partitions)$. Otherwise, if~$\min_{\newrefined\in\newsets^\refined} \singlecost_\newrefined > \objval^{\LB}(\partitions)$, then~$\objval^\LB(\merged) \geq \singlecost_{\newmerged'} \geq \min_{\newmerged\in\newsets^\merged} \singlecost_\newmerged \geq \min_{\newrefined\in\newsets^\refined} \singlecost_\newrefined > \objval^{\LB}(\partitions)$.
\end{proof}
 Corollary~\ref{coro:lbsol-feasible-part} states a set of conditions that allows to identify when a merge operation leads to a strictly increasing lower bound. Notice that Model~\eqref{eq:cclp-part2} does not need to be solved, instead it is sufficient to compute the value of~$\min_{\newmerged\in\newsets^\merged} \singlecost_\newmerged$ and~$\min_{\newrefined\in\newsets^\refined} \singlecost_\newrefined$.

We now show how to design a merger~$\merged$ such that~$\lbsol$ is not feasible for Model~\eqref{eq:cclp-part} with~$\merged$. Moreover, we want~$\merged$ to be of minimal size, \ie,~$\card{\merged} = \card{\partitions}$. The following proposition describes a valid way to create such a partition.
\begin{proposition}
    \label{prop:merger-construction}
    Let the point~$\lbsol$ be feasible for Model~\eqref{eq:cclp-part} with~$\partitions$ but infeasible for Model~\eqref{eq:cclp-reform} and let~$\refined$ be a refinement of the partition~$\partitions$ such that~$\card{\refined} = \card{\partitions} + \extrasize$, with~$\extrasize = \lfloor \tau\card{\scenarios} \rfloor + 1 - \card{\infpartitions(\lbsol)}$. Then, a merger~$\merged$ of the partition~$\refined$ satisfying~$\card{\merged} = \card{\partitions}$ exists such that the point~$\lbsol$ is not feasible for Model~\eqref{eq:cclp-part} with~$\merged$.
\end{proposition}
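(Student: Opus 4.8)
The plan is to turn the statement into a counting argument on infeasible subsets and to build $\merged$ block by block from $\refined$. By Remark~\ref{remark:chance-cons-equi}, a point is feasible for Model~\eqref{eq:cclp-part} with any partition $\partitions$ exactly when $\card{\infpartitions(\lbsol)} \leq \lfloor \tau\card{\scenarios}\rfloor$, so excluding $\lbsol$ amounts to forcing the number of infeasible subsets to be at least $\lfloor \tau\card{\scenarios}\rfloor + 1$. The one structural fact I would use about merging is that any block $\newmerged = \bigcup_i \newrefined_i$ of a merger satisfies $\probset^\newmerged = \bigcap_i \probset^{\newrefined_i}$, so $\indvar_\newmerged(\lbsol)=1$ if and only if $\indvar_{\newrefined_i}(\lbsol)=1$ for every $i$; that is, a merged block is infeasible for $\lbsol$ as soon as it contains one infeasible block of $\refined$. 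Hence merging never creates a new infeasible block, and the count of infeasible blocks is preserved precisely when no single merge combines two infeasible blocks of $\refined$.

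First I would pin down the composition of $\refined$. The refinement used here is the one that renders $\lbsol$ infeasible (Proposition~\ref{prop:smallest-refinement}), so $\card{\infrefined(\lbsol)} \geq \lfloor\tau\card{\scenarios}\rfloor+1$. On the other hand, splitting can never turn a feasible subset into infeasible ones, so $\card{\feasrefined(\lbsol)} \geq \card{\feaspartitions(\lbsol)}$, as already noted in the proof of Proposition~\ref{prop:lower-bound-part}. Together with $\card{\refined} = \card{\partitions}+\extrasize$ and $\extrasize = \lfloor\tau\card{\scenarios}\rfloor + 1 - \card{\infpartitions(\lbsol)}$, these two inequalities squeeze the counts to equalities: since $\card{\infrefined(\lbsol)} = \card{\refined} - \card{\feasrefined(\lbsol)} \leq \card{\partitions}+\extrasize - \card{\feaspartitions(\lbsol)} = \lfloor\tau\card{\scenarios}\rfloor+1$, we obtain $\card{\infrefined(\lbsol)} = \lfloor\tau\card{\scenarios}\rfloor+1$ and $\card{\feasrefined(\lbsol)} = \card{\feaspartitions(\lbsol)}$.

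The construction of $\merged$ then performs exactly $\extrasize$ merges, reducing the cardinality from $\card{\partitions}+\extrasize$ to $\card{\partitions}$, while keeping each of the $\lfloor\tau\card{\scenarios}\rfloor+1$ infeasible blocks of $\refined$ inside a distinct block of $\merged$. Under that rule the infeasible count is preserved, $\card{\infmerged(\lbsol)} = \lfloor\tau\card{\scenarios}\rfloor+1 > \lfloor\tau\card{\scenarios}\rfloor$, and $\lbsol$ is infeasible for Model~\eqref{eq:cclp-part} with $\merged$. What must be checked is that $\extrasize$ such merges are actually available, that is, that there is enough feasible material to absorb the reduction. A direct count gives $\card{\feaspartitions(\lbsol)} - \extrasize = \card{\partitions} - \lfloor\tau\card{\scenarios}\rfloor - 1 \geq 0$, since every admissible partition satisfies $\card{\partitions} \geq \lfloor\tau\card{\scenarios}\rfloor + 1$; thus $\card{\feasrefined(\lbsol)} = \card{\feaspartitions(\lbsol)} \geq \extrasize$.

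The hard part is the boundary case $\card{\feaspartitions(\lbsol)} = \extrasize$, equivalently $\card{\partitions} = \lfloor\tau\card{\scenarios}\rfloor+1$, where the two branches of the construction differ. When $\card{\feaspartitions(\lbsol)} \geq \extrasize + 1$ I would merge $\extrasize+1$ feasible blocks into a single one: this drops the cardinality by $\extrasize$, leaves the infeasible blocks untouched, and is plainly compatible with the no-two-infeasibles rule. When only $\extrasize$ feasible blocks are available, merging them among themselves lowers the cardinality by at most $\extrasize - 1$, one short; I would recover the missing unit by absorbing the remaining feasible block into one of the $\lfloor\tau\card{\scenarios}\rfloor + 1 \geq 1$ infeasible blocks, which stays infeasible and so preserves the infeasible count. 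In both branches the result is a merger $\merged$ with $\card{\merged} = \card{\partitions}$ and $\card{\infmerged(\lbsol)} = \lfloor\tau\card{\scenarios}\rfloor+1$, which is exactly the claim.
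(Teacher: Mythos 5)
Your proof is correct and takes essentially the same route as the paper's: the identical two-case construction (merge $\extrasize+1$ feasible blocks of~$\refined$ when $\card{\partitions}\geq\lfloor\threshold\card{\scenarios}\rfloor+2$; merge the $\extrasize$ feasible blocks together with one infeasible block when $\card{\partitions}=\lfloor\threshold\card{\scenarios}\rfloor+1$), verified by the same subset-counting argument via Remark~\ref{remark:chance-cons-equi}. The only real difference is that you make explicit---by reading $\refined$ as the refinement of Proposition~\ref{prop:smallest-refinement} and squeezing the counts to $\card{\feasrefined(\lbsol)}=\card{\feaspartitions(\lbsol)}$ and $\card{\infrefined(\lbsol)}=\lfloor\threshold\card{\scenarios}\rfloor+1$---a property the paper's proof uses only implicitly, and which is genuinely needed: for an arbitrary refinement of the prescribed size (e.g., one that splits only feasible subsets) no merger of size $\card{\partitions}$ can exclude $\lbsol$, so flagging it is a point in your favor rather than a deviation.
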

\begin{proof}
    First, we show that if~$\merged$ satisfies the following two properties,
    \begin{align}
        \card{\merged} &= \card{\partitions},\label{eq:properties-merger1}\\
        \card{\feasmerged(\lbsol)} &= \card{\feaspartitions(\lbsol)} - \extrasize, \label{eq:properties-merger2}
    \end{align}
    then~$\lbsol$ is not feasible for Model~\eqref{eq:cclp-part} with~$\merged$.
    By combining Equations~\eqref{eq:properties-merger1} and~\eqref{eq:properties-merger2} we have
    \begin{align*}
        \card{\feasmerged(\lbsol)} &= \card{\feaspartitions(\lbsol)} - \lfloor \tau\card{\scenarios} \rfloor - 1 + \card{\infpartitions(\lbsol)},\\
        &= \card{\partitions} - \lfloor \tau\card{\scenarios} \rfloor - 1,\\
        &= \card{\merged} - \lfloor \tau\card{\scenarios} \rfloor - 1,
    \end{align*}
    which, by Remark~\ref{remark:chance-cons-equi}, implies that~$\lbsol$ is not feasible for Model~\eqref{eq:cclp-part} with~$\merged$.

    Second, we describe how to construct a merger~$\merged$ of~$\refined$ that satisfies Equations~\eqref{eq:properties-merger1} and~\eqref{eq:properties-merger2}. Observe that it is not possible to refine subsets~$\partition\in\feaspartitions(\lbsol)$ into subsets~$\newrefined\in\infrefined(\lbsol)$ using split operations. Hence, for any refinement~$\refined$ of partition~$\partitions$ the inequality~$\card{\feasrefined (\lbsol)} \geq \card{\feaspartitions (\lbsol)}$ holds. We distinguish two cases.

    \textit{Case 1}. If~$\card{\partitions} = \lfloor \tau\card{\scenarios} \rfloor + 1$, then~$\extrasize=\card{\partitions} - \card{\infpartitions (\lbsol)} = \card{\feaspartitions (\lbsol)} \le \card{\feasrefined (\lbsol)}$. A valid merger~$\merged$ is obtained by merging~$\extrasize$ subsets in~$\feasrefined (\lbsol)$ together with exactly one subset in~$\infrefined (\lbsol)$. This is always possible because~$\feasrefined (\lbsol)$ contains at least~$\extrasize$ elements and~$\infrefined (\lbsol)$ is not empty since~$\lbsol$ would otherwise necessarily be feasible for Model~\eqref{eq:cclp-reform}. This yields a merger~$\merged$ of size~$\card{\partitions}$ satisfying Equation~\eqref{eq:properties-merger1}. Moreover, exactly~$\extrasize$ sets~$\newrefined \in \feasrefined(\lbsol )$ are merged with a set~$\newrefined' \in \infrefined(\lbsol)$ such that Equation~\eqref{eq:properties-merger2} is satisfied.

    \textit{Case 2}. If~$\card{\partitions} \geq \lfloor \tau\card{\scenarios} \rfloor + 2$, then~$\extrasize\leq\card{\partitions} - \card{\infpartitions (\lbsol)} - 1 = \card{\feaspartitions (\lbsol)} - 1 \le \card{\feasrefined (\lbsol)} - 1$. We create the merger~$\merged$ by merging~$\extrasize+1$ sets inside~$\feasrefined (\lbsol)$. This yields a merger~$\merged$ of size~$\card{\partitions}$ satisfying Equation~\eqref{eq:properties-merger1}. Moreover,~$\extrasize + 1$ sets are merged to form one subset~$\newmerged\in\feasmerged(\lbsol)$ and Equation~\eqref{eq:properties-merger1} is satisfied.
\end{proof}

\begin{algorithm}[!ht]
  \caption{Maximal Size Merger.}
  \label{alg:merge}
  \DontPrintSemicolon
  \SetKwInOut{Input}{Input}
  \Input{scenario set $\scenarios$; partition $\partitions$ of $\scenarios$; refinement $\refined$ of $\partitions$; point $\lbsol \in \decvarset$.}
  \SetKwInOut{Output}{Output}
  \Output{merger $\merged$ of $\refined$, where $\lbsol$ is not feasible for Model~\eqref{eq:cclp-part} with $\merged$.}
  \SetKwInOut{Initialize}{Initialize}
  \Initialize{$\merged \leftarrow \refined$ and $\extrasize \leftarrow \lfloor \tau\card{\scenarios} \rfloor + 1 - \card{\infpartitions(\lbsol)}$.}
  \uIf{$\card{\partitions} = \lfloor \threshold \card{\scenarios}\rfloor + 1 $}{
    Select all $\newmerged_1, \dots, \newmerged_\extrasize \in \feasmerged(\lbsol)$ and $\newmerged_{\extrasize + 1} \in \infmerged(\lbsol)$. \label{alg:line-merge-selection-1}\\
  }
  \Else{
   Select $\newmerged_1, \dots, \newmerged_{\extrasize + 1} \in \feasmerged(\lbsol)$. \label{alg:line-merge-selection-2}\\
  }
  Set $\newmerged_{\extrasize+2} \leftarrow \bigcup_{i \in [\extrasize + 1]} \newmerged_i$ and $\merged \leftarrow \merged \setminus \{\newmerged_1, \dots, \newmerged_{\extrasize + 1}\} \cup \{\newmerged_{\extrasize + 2}\}.$\\
  \KwRet{$\merged$}
\end{algorithm}


Proposition~\ref{prop:merger-construction} gives a simple procedure for creating a maximum-size merger that excludes the point~$\lbsol$. Algorithm~\ref{alg:merge} summarizes how to carry out this procedure. The following theorem is the main result used in Algorithm~\ref{alg:adaptive-partitioning} for obtaining mergers~$\merged$ such that~$\objval^{\LB}(\merged) > \objval^{\LB}(\partitions)$.

\begin{theorem}
  \label{thm:strict-increase-merge}
  Let~$\lbsol$ be the optimal solution of Model~\eqref{eq:cclp-part} with~$\partitions$. Let~$\refined$ be a refinement of~$\partitions$ constructed by applying Algorithm~\ref{alg:refinement} such that~$\min_{\newrefined\in\newsets^{\refined}} \singlecost_\newrefined > \objval^{\LB}(\partitions)$. If~$\lbsol$ is unique and infeasible for Model~\eqref{eq:cclp-reform}, then the merger~$\merged$ of~$\refined$ obtained by applying Algorithm~\ref{alg:merge} is such that~$\card{\merged} = \card{\partitions}$ and
  \begin{equation*}
    \label{eq:strict-bound-merge}
    \objval^{\LB}(\merged) > \objval^{\LB}(\partitions).
  \end{equation*}
\end{theorem}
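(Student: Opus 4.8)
The plan is to combine the structural construction of Algorithm~\ref{alg:merge} (justified in Proposition~\ref{prop:merger-construction}) with the strict-increase criterion of Corollary~\ref{coro:lbsol-feasible-part}. First I would verify that the hypotheses of Theorem~\ref{thm:strict-increase-merge} license invoking the construction: since $\lbsol$ is the optimal solution of Model~\eqref{eq:cclp-part} with~$\partitions$ and is infeasible for Model~\eqref{eq:cclp-reform}, and since $\refined$ is obtained via Algorithm~\ref{alg:refinement}, Proposition~\ref{prop:smallest-refinement} guarantees that $\refined$ has the required size $\card{\refined} = \card{\partitions} + \extrasize$ with $\extrasize = \lfloor \tau\card{\scenarios} \rfloor + 1 - \card{\infpartitions(\lbsol)}$. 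This is exactly the precondition of Proposition~\ref{prop:merger-construction}.

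Next I would apply Proposition~\ref{prop:merger-construction} to conclude that the merger~$\merged$ produced by Algorithm~\ref{alg:merge} satisfies $\card{\merged} = \card{\partitions}$ and that~$\lbsol$ is \emph{not} feasible for Model~\eqref{eq:cclp-part} with~$\merged$. This settles the size claim immediately and supplies the infeasibility hypothesis needed downstream. The remaining task is to upgrade the non-strict bound into a strict one.

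For the strict inequality I would invoke Corollary~\ref{coro:lbsol-feasible-part}, whose hypotheses I now check one by one. The point~$\lbsol$ is assumed unique, and we have just established it is infeasible for Model~\eqref{eq:cclp-part} with~$\merged$; the size condition $\card{\merged} \geq \card{\partitions}$ holds because in fact $\card{\merged} = \card{\partitions}$. The final hypothesis of Corollary~\ref{coro:lbsol-feasible-part} is a disjunction, and here the theorem's assumption $\min_{\newrefined\in\newsets^{\refined}} \singlecost_\newrefined > \objval^{\LB}(\partitions)$ directly supplies the second disjunct. All hypotheses being met, Corollary~\ref{coro:lbsol-feasible-part} yields $\objval^{\LB}(\merged) > \objval^{\LB}(\partitions)$, which completes the proof.

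I expect the proof to be essentially a bookkeeping exercise that threads together previously established results, so there is no single hard analytical obstacle. The one point requiring care is confirming that the hypothesis on $\newsets^{\refined}$ in the theorem matches the disjunct in Corollary~\ref{coro:lbsol-feasible-part} verbatim, and that the construction of~$\merged$ from~$\refined$ does not alter the newly created refined subsets in a way that invalidates the bound $\min_{\newrefined\in\newsets^{\refined}} \singlecost_\newrefined > \objval^{\LB}(\partitions)$; since merging only unions subsets and the relevant $\singlecost$ bound is stated over the refined subsets $\newsets^{\refined}$, this consistency must be checked but should hold by the monotonicity of $\singlecost$ under merging already recorded in the proof of Proposition~\ref{prop:refinement-bound}.
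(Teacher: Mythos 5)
Your proposal is correct and follows essentially the same route as the paper's own proof: establish the size of~$\refined$ via Proposition~\ref{prop:smallest-refinement}, invoke Proposition~\ref{prop:merger-construction} (realized by Algorithm~\ref{alg:merge}) to get $\card{\merged} = \card{\partitions}$ and the infeasibility of~$\lbsol$, and then apply Corollary~\ref{coro:lbsol-feasible-part} using the hypothesis on $\newsets^{\refined}$ as one disjunct. Your closing remark about checking that merging does not disturb the bound over $\newsets^{\refined}$ is resolved exactly as you suspect, since that bound is fixed once~$\refined$ is fixed and the corollary's proof handles the passage to~$\merged$.
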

\begin{proof}
  The partition~$\refined$ is constructed by applying Algorithm~\ref{alg:refinement}. This means that~$\card{\refined} = \card{\partitions} + \extrasize$, with~$\extrasize = \lfloor \tau\card{\scenarios} \rfloor + 1 - \card{\infpartitions(\lbsol)}$. By applying Algorithm~\ref{alg:merge}, we construct a merger~$\merged$ as described in the proof of Proposition~\ref{prop:merger-construction}. Consequently, the point~$\lbsol$ is not feasible for Model~\eqref{eq:cclp-part} with~$\merged$ and~$\card{\merged} = \card{\partitions}$. Since~~$\lbsol$ is not feasible for Model~\eqref{eq:cclp-part} with~$\merged$ and~$\min_{\newrefined\in\newsets^{\newrefined}} \singlecost_\newrefined > \objval^{\LB}(\partitions)$ or~$\min_{\newmerged\in\newsets^{\merged}} \singlecost_\newmerged > \objval^{\LB}(\partitions)$ we know by Corollary~\ref{coro:lbsol-feasible-part} that~$\objval^\LB (\merged) > \objval^\LB (\partitions)$.
\end{proof}

\rev{Finally, the next proposition shows that any solution that is infeasible for Model~\eqref{eq:cclp-part} with~$\partitions$ has an objective value smaller than or equal to $\objval^{\LB}(\partitions)$ will never become feasible for Model~\eqref{eq:cclp-part} with~$\merged$.
\begin{proposition}
    \label{prop:cycling}
    Let~$\refined$ be a refinement of~$\partitions$ constructed by applying Algorithm~\ref{alg:refinement} and such that~$\min_{\newrefined\in\newsets^{\refined}} \singlecost_\newrefined > \objval^{\LB}(\partitions)$. Let~$\merged$ be a merger of~$\refined$ obtained by applying Algorithm~\ref{alg:merge}. If~$\lbsol$ is not feasible for Model~\eqref{eq:cclp-part} with~$\partitions$ and is such that~$\objfun (\lbsol) \leq \objval^{\LB}(\partitions)$ then $\lbsol$ is not feasible for Model~\eqref{eq:cclp-part} with~$\merged$.
\end{proposition}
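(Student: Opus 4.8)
The plan is to reduce both feasibility statements to the counting form of Remark~\ref{remark:chance-cons-equi}. Since Algorithm~\ref{alg:merge} returns a merger with $\card{\merged} = \card{\partitions}$, a point $\lbsol \in \decvarset$ is feasible for Model~\eqref{eq:cclp-part} with $\merged$ if and only if $\card{\feasmerged(\lbsol)} \geq \card{\merged} - \lfloor \tau\card{\scenarios}\rfloor = \card{\partitions} - \lfloor \tau\card{\scenarios}\rfloor$, while the hypothesis that $\lbsol$ is infeasible for Model~\eqref{eq:cclp-part} with $\partitions$ reads $\card{\feaspartitions(\lbsol)} \leq \card{\partitions} - \lfloor \tau\card{\scenarios}\rfloor - 1$. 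It therefore suffices to establish the single inequality $\card{\feasmerged(\lbsol)} \leq \card{\feaspartitions(\lbsol)}$, which I would prove through the intermediate refinement $\refined$ by showing $\card{\feasmerged(\lbsol)} \leq \card{\feasrefined(\lbsol)} \leq \card{\feaspartitions(\lbsol)}$.

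The first key step combines the $\singlecost$ hypothesis with the objective bound on $\lbsol$. For every new subset $\newrefined \in \newsets^{\refined}$ we have $\singlecost_\newrefined \geq \min_{\newrefined' \in \newsets^{\refined}} \singlecost_{\newrefined'} > \objval^{\LB}(\partitions) \geq \objfun(\lbsol)$. If $\lbsol$ were feasible for such a subset, i.e.\ $\lbsol \in \probset^\newrefined \cap \decvarset$, then definition~\eqref{eq:single-cost-definition} would give $\singlecost_\newrefined \leq \objfun(\lbsol)$, a contradiction. Hence $\lbsol$ is infeasible for every subset in $\newsets^{\refined}$. Consequently any subset of $\refined$ that is feasible for $\lbsol$ cannot be a new subset, so it must be an unsplit subset already present in $\partitions$; this yields the inclusion $\feasrefined(\lbsol) \subseteq \feaspartitions(\lbsol)$ and in particular $\card{\feasrefined(\lbsol)} \leq \card{\feaspartitions(\lbsol)}$.

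The second key step is a monotonicity property of merging. Since $\merged$ is a merger of $\refined$, each subset $\newmerged \in \merged$ is the disjoint union of one or more subsets of $\refined$, so $\probset^\newmerged = \bigcap_{\newrefined \subseteq \newmerged} \probset^\newrefined$. Thus $\newmerged$ is feasible for $\lbsol$ only if all its constituent subsets of $\refined$ are feasible for $\lbsol$; as these constituents are disjoint across distinct elements of $\merged$, merging cannot increase the number of feasible subsets, giving $\card{\feasmerged(\lbsol)} \leq \card{\feasrefined(\lbsol)}$. Chaining the two steps produces $\card{\feasmerged(\lbsol)} \leq \card{\feaspartitions(\lbsol)} \leq \card{\partitions} - \lfloor \tau\card{\scenarios}\rfloor - 1 = \card{\merged} - \lfloor \tau\card{\scenarios}\rfloor - 1$, so by Remark~\ref{remark:chance-cons-equi} the point $\lbsol$ is infeasible for Model~\eqref{eq:cclp-part} with $\merged$.

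I expect the only delicate point to be the first step: one must argue that the property $\min_{\newrefined \in \newsets^{\refined}} \singlecost_\newrefined > \objval^{\LB}(\partitions)$, which constrains the subsets created during refinement, rules out feasibility of the \emph{arbitrary} point $\lbsol$ of the statement rather than only the solution that generated $\refined$. The bridge is exactly the inequality $\objfun(\lbsol) \leq \objval^{\LB}(\partitions)$, which pushes $\lbsol$ out of the feasible region of every newly created subset; this is the mechanism that prevents a previously excluded solution with objective at most $\objval^{\LB}(\partitions)$ from reappearing after a merge. Once this transfer is secured, the remaining inequalities are purely combinatorial and use only $\card{\merged} = \card{\partitions}$ together with the merger relation, so they are insensitive to the scenario allocation in Line~\ref{alg:line-remaining-split} of Algorithm~\ref{alg:refinement} and to the precise merge performed by Algorithm~\ref{alg:merge}.
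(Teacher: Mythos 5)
Your proof is correct and follows essentially the same route as the paper's: the same chain $\card{\feasmerged(\lbsol)} \leq \card{\feasrefined(\lbsol)} \leq \card{\feaspartitions(\lbsol)}$, the same use of Remark~\ref{remark:chance-cons-equi} together with $\card{\merged} = \card{\partitions}$, and the same contradiction via $\singlecost_\newrefined \leq \objfun(\lbsol) \leq \objval^{\LB}(\partitions)$ to rule out feasible new subsets. If anything, you spell out more explicitly than the paper why no subset in $\newsets^{\refined}$ can be feasible for $\lbsol$, a step the paper dispatches with a one-line contradiction.
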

\begin{proof}
    Since $\merged$ is a merger of~$\refined$, we have~$\card{\feasmerged(\lbsol)} \leq \card{\feasrefined(\lbsol)}$. Similarly, we have~$\card{\feasrefined(\lbsol)} \leq \card{\feaspartitions(\lbsol)}$, otherwise~$\min_{\newrefined\in\newsets^{\refined}} \singlecost_\newrefined \leq \objval^{\LB}(\partitions)$ holds and we have a contradiction. Since the point~$\lbsol$ is infeasible for Model~\eqref{eq:cclp-part} with~$\partitions$ the inequality~$\card{\feaspartitions (\lbsol)} \leq \card{\partitions} - \lfloor \tau\card{\scenarios} \rfloor - 1$ holds following Remark~\ref{remark:chance-cons-equi}. Further, Proposition~\ref{prop:merger-construction} shows that Algorithm~\ref{alg:merge} creates a merger of size~$\card{\merged} = \card{\partitions}$. Combining these equations results in the inequality
    \begin{align*}
        \card{\feasmerged(\lbsol)} &\leq \card{\feasrefined(\lbsol)}  \leq \card{\feaspartitions(\lbsol)},\\
                                   & \leq \card{\partitions} - \lfloor \tau\card{\scenarios} \rfloor - 1,\\
                                   &  = \card{\merged} - \lfloor \tau\card{\scenarios} \rfloor - 1,
    \end{align*}
    which, by Remark~\ref{remark:chance-cons-equi}, implies that~$\lbsol$ is not feasible for Model~\eqref{eq:cclp-part} with~$\merged$.
\end{proof}

Proposition~\ref{prop:cycling} describes the conditions in which a point remains infeasible after a refinement and merge operations. These conditions are always satisfied using Algorithms 2 and 3 for refinement and merge respectively and when merging is only performed when the condition $\min_{\newrefined\in\newsets^{\refined}} \singlecost_\newrefined > \objval^{\LB}(\partitions)$ is satisfied. In that case, a point excluded during previous refinement and merge operations cannot become feasible in a subsequent partition. As a consequence, our algorithm is ensured never to cycle.}

\rev{\subsection{Mergers with equal value.}
  \label{sec:suff-part-thro}

  In what follows, \revtwo{we present sufficient conditions} for the existence of a merger~$\merged$ of any partition~$\partitions$ that satisfies~$\objval^\LB (\merged) = \objval^\LB (\partitions)$. \revtwo{This section provides intuition on conditions in which the \APM is likely to terminate early. Indeed, by applying these results to the trivial partition~$\partitions = \{\{\scenario_1\}, \dots, \{\scenario_{\card{\scenarios}}\}\}$ we can identify when a small size partition of the original scenario set $\scenarios$ exists. Such results also pave the way for a deeper theoretical characterization of completely sufficient partitions in the sense of~\cite{song2015adaptive}, which lies beyond the scope of this paper.}

  Note that, contrarily to the previous section, the set~$\merged$ is now a merger of~$\partitions$ and not a merger of~$\refined$. We introduce two sets necessary for discussing the aforementioned results. First, for an arbitrary~$\partitions$, we define $\objregion (\partitions) = \{\decvar \in \decvarset : \ \objfun (\decvar) \leq \objval^\LB (\partitions) \}$ the set of points~$\decvar \in \decvarset$ that have an objective value smaller than or equal than~$\objval^\LB (\partitions)$. Furthermore, for an arbitrary partition $\partitions$ and any positive integer $\sizeparam \in \naturals$ we define the set of $\sizeparam$-feasible points as
\begin{equation*}
    \feasregion (\partitions, \sizeparam) = \left \{ \decvar \in \objregion (\partitions) : \sum_{\partition \in \partitions} \indvar_\partition (\decvar) \geq \card{\partitions} - \lfloor \threshold \card{\partitions} \rfloor - \sizeparam \right \}.
\end{equation*}
This set represents the region of $\objregion (\partitions)$ satisfying $\delta$ fewer subsets than necessary in Model~\eqref{eq:cclp-part} with~$\partitions$.

\begin{proposition}
    \label{prop:union-inter}
    Let $\partitionsleft,\partitionsright \subseteq \partitions$ such that $\card{\partitionsleft} = \card{\partitionsright} = \sizeparam \in \naturals_{\geq 1}$ and $\partitionsleft \cap \partitionsright = \emptyset$. If~$\feasregion (\partitions, \sizeparam) \subseteq (\cap_{\partition \in \partitionsleft} \probset^\partition) \cup (\cap_{\partition \in \partitionsright} \probset^\partition)$, then there exists a merger~$\merged$ of size $\card{\merged} = \card{\partitions} - \sizeparam $ such that
    \begin{equation*}
        \objval^\LB (\merged) = \objval^\LB (\partitions).
    \end{equation*}
\end{proposition}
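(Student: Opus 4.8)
The plan is to exploit Corollary~\ref{prop:merge-worse}, which already gives $\objval^\LB(\merged)\le\objval^\LB(\partitions)$ for any merger~$\merged$ of~$\partitions$; hence it suffices to exhibit a merger of size $\card{\partitions}-\sizeparam$ for which the reverse inequality holds. I would construct~$\merged$ by \emph{zipping} the two families together. Writing $\partitionsleft=\{\partition_1',\dots,\partition_\sizeparam'\}$ and $\partitionsright=\{\partition_1'',\dots,\partition_\sizeparam''\}$ (possible since both have size~$\sizeparam$ and are disjoint), I set $\newmerged_i=\partition_i'\cup\partition_i''$ for $i\in[\sizeparam]$ and leave every subset of $\partitions\setminus(\partitionsleft\cup\partitionsright)$ untouched, so that $\merged=(\partitions\setminus(\partitionsleft\cup\partitionsright))\cup\{\newmerged_1,\dots,\newmerged_\sizeparam\}$. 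This is a merger of~$\partitions$ in the sense of Definition~\ref{def:merge}, and since each pair of subsets is replaced by a single subset, $\card{\merged}=\card{\partitions}-\sizeparam$ as required. By definition of the aggregated feasible sets, $\probset^{\newmerged_i}=\probset^{\partition_i'}\cap\probset^{\partition_i''}$, whence $\indvar_{\newmerged_i}(\decvar)=\indvar_{\partition_i'}(\decvar)\,\indvar_{\partition_i''}(\decvar)$ for every~$\decvar\in\decvarset$.

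To prove $\objval^\LB(\merged)\ge\objval^\LB(\partitions)$, let $\lbsol^\merged$ be optimal for Model~\eqref{eq:cclp-part} with~$\merged$ and suppose, for contradiction, that $\objfun(\lbsol^\merged)<\objval^\LB(\partitions)$; then $\lbsol^\merged\in\objregion(\partitions)$. Comparing indicator counts on the untouched subsets (where they agree) and on the zipped subsets, I would use that $\indvar_{\partition_i'}+\indvar_{\partition_i''}-\indvar_{\partition_i'}\indvar_{\partition_i''}\in\{0,1\}$ is the logical OR, so that
\[ \sum_{\partition\in\partitions}\indvar_\partition(\lbsol^\merged)-\sum_{\newmerged\in\merged}\indvar_\newmerged(\lbsol^\merged)=\sum_{i=1}^{\sizeparam}\bigl(\indvar_{\partition_i'}+\indvar_{\partition_i''}-\indvar_{\partition_i'}\indvar_{\partition_i''}\bigr)(\lbsol^\merged)\ \ge\ 0. \]
Feasibility of $\lbsol^\merged$ for~$\merged$ together with Remark~\ref{remark:chance-cons-equi} gives $\sum_{\newmerged\in\merged}\indvar_\newmerged(\lbsol^\merged)\ge\card{\merged}-\lfloor\threshold\card{\scenarios}\rfloor=\card{\partitions}-\sizeparam-\lfloor\threshold\card{\scenarios}\rfloor$, so the displayed inequality yields $\sum_{\partition\in\partitions}\indvar_\partition(\lbsol^\merged)\ge\card{\partitions}-\sizeparam-\lfloor\threshold\card{\scenarios}\rfloor$. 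Hence $\lbsol^\merged\in\feasregion(\partitions,\sizeparam)$.

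Now I invoke the hypothesis $\feasregion(\partitions,\sizeparam)\subseteq(\cap_{\partition\in\partitionsleft}\probset^\partition)\cup(\cap_{\partition\in\partitionsright}\probset^\partition)$: $\lbsol^\merged$ lies in one of the two intersections, say without loss of generality $\lbsol^\merged\in\cap_{\partition\in\partitionsleft}\probset^\partition$, i.e.\ $\indvar_{\partition_i'}(\lbsol^\merged)=1$ for all~$i$. For each~$i$ the OR term then equals~$1$, so the difference above is exactly~$\sizeparam$, giving $\sum_{\partition\in\partitions}\indvar_\partition(\lbsol^\merged)\ge\card{\partitions}-\lfloor\threshold\card{\scenarios}\rfloor$. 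By Remark~\ref{remark:chance-cons-equi} this means $\lbsol^\merged$ is feasible for Model~\eqref{eq:cclp-part} with~$\partitions$, so $\objfun(\lbsol^\merged)\ge\objval^\LB(\partitions)$ — contradicting the assumption. Therefore $\objval^\LB(\merged)\ge\objval^\LB(\partitions)$, and combined with Corollary~\ref{prop:merge-worse} we conclude $\objval^\LB(\merged)=\objval^\LB(\partitions)$. The delicate point — and the reason the \emph{full} intersections over $\partitionsleft$ and $\partitionsright$ (rather than merely pairwise feasibility) appear in the hypothesis — is exactly this last step: the zipping must be arranged so that feasibility of an entire family forces every one of the $\sizeparam$ OR terms to~$1$, recovering precisely the $\sizeparam$ satisfied subsets lost in the merge.
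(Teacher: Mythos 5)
Your proof is correct and follows essentially the same route as the paper's: the identical zipping construction pairing each subset of $\partitionsleft$ with one of $\partitionsright$, followed by the same contradiction argument showing that any merged-feasible point with objective below $\objval^\LB(\partitions)$ lies in $\feasregion(\partitions,\sizeparam)$, hence in one of the two intersections, and is therefore feasible for Model~\eqref{eq:cclp-part} with $\partitions$, with Corollary~\ref{prop:merge-worse} supplying the reverse inequality. The only cosmetic difference is bookkeeping: you aggregate everything into a single logical-OR identity $\indvar_{\partition_i'}+\indvar_{\partition_i''}-\indvar_{\partition_i'}\indvar_{\partition_i''}\in\{0,1\}$, whereas the paper states the per-pair inequality $\indvar_{\partition_1}(\lbsol)+\indvar_{\partition_2}(\lbsol)\geq\indvar_{\newmerged}(\lbsol)+1$ and a separate inequality for the untouched subsets before summing.
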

\begin{proof}
    We construct~$\merged$ by removing all the subsets of~$\partitionsleft$ and~$\partitionsright$ from~$\partitions$ by creating~$\sizeparam$ new subsets, each obtained by taking the union of one subset in~$\partitionsleft$ and one subset in ~$\partitionsright$. The order in which the subsets are taken has no importance. $\merged$ is a merger of~$\partitions$ and~$\card{\merged} = \card{\partitions} - \sizeparam$.

    It remains to show that~$\objval^\LB (\merged) = \objval^\LB (\partitions)$. Assume that there exists a point~$\lbsol$ feasible for Model~\eqref{eq:cclp-part} with~$\merged$ and such that~$\objfun(\lbsol) < \objval^\LB (\partitions)$. First, we show the validity of two inequalities for~$\lbsol$. Second, we combine these inequalities to show that~$\lbsol$ is necessarily feasible for Model~\eqref{eq:cclp-part} with~$\partitions$, hence contradicting the statement. Recall the notation~$\newsets^\merged = \merged \setminus \partitions$.

    \textit{Inequality 1. } Let $\newmerged$ be an arbitrary subset in~$\newsets^\merged$. Let~$\partition_1 \in \partitions_1 $ and~$\partition_2 \in \partitions_2$ be the unique subsets selected to create~$\newmerged$, \ie~$\newmerged = \partition_1 \cup \partition_2$. If~$\indvar_\newmerged (\lbsol) = 1$ then, by construction,~$\indvar_{\partition_1} (\lbsol) = \indvar_{\partition_2} (\lbsol) = 1$.

    Recall that~$\lbsol$ is feasible for Model~\eqref{eq:cclp-part} with~$\merged$. This implies that~$\lbsol \in \feasregion (\partitions , \sizeparam ) \subseteq  (\cap_{\partition \in \partitionsleft} \probset^\partition) \cup (\cap_{\partition \in \partitionsright} \probset^\partition)$. Hence,~$\lbsol \in \cap_{\partition \in \partitionsright} \probset^\partition$ or~$\lbsol \in \cap_{\partition \in \partitionsleft} \probset^\partition$. As a consequence, if~$\indvar_\newmerged (\lbsol) = 0$ we have~$\indvar_{\partition_1} (\lbsol)  = 1$ or~$\indvar_{\partition_2} (\lbsol) = 1$. For every~$\newmerged \in \newsets^\merged$, the inequality
    \begin{equation*}
        \indvar_{\partition_1} (\lbsol) + \indvar_{\partition_2} (\lbsol)  \geq \indvar_\newmerged (\lbsol) + 1
    \end{equation*}
    is valid. Summing over all~$\newmerged \in \newsets^\merged$ yields
    \begin{equation}
        \label{ineq:res-set}
        \sum_{\partition_1 \in \partitionsleft}  \indvar_{\partition_1} (\lbsol) + \sum_{\partition_2 \in \partitionsright} \indvar_{\partition_2} (\lbsol) \geq  \sum_{\newmerged \in \newsets^\merged} \indvar_\newmerged (\lbsol) + \card{\newsets^\merged}.
    \end{equation}

    \textit{Inequality 2. } Let $\newmerged$ now be an arbitrary subset in~$\merged \setminus \newsets^\merged$. By construction, there is exactly one~$\partition \in \partitions$ such that~$\partition = \newmerged$. This means that for every~$\newmerged \in \merged  \setminus \newsets^\merged$, the inequality
    \begin{equation*}
        \indvar_{\partition} (\lbsol)  \geq \indvar_\newmerged (\lbsol)
    \end{equation*}
    is valid. Summing over all~$\newmerged \in \merged  \setminus \newsets^\merged$ yields
    \begin{equation}
        \label{ineq:orig-set}
        \sum_{\partition \in \partitions \setminus \partitionsleft \setminus \partitionsright}  \indvar_{\partition} (\lbsol) \geq  \sum_{\newmerged \in \merged  \setminus \newsets^\merged} \indvar_\newmerged (\lbsol).
    \end{equation}
    Finally, by combining Inequality~\eqref{ineq:res-set} and Inequality~\eqref{ineq:orig-set}, we obtain
    \begin{align*}
        \sum_{\partition \in \partitions}  \indvar_{\partition} (\lbsol)
        &= \sum_{\partition_1 \in \partitionsleft}  \indvar_{\partition_1} (\lbsol) + \sum_{\partition_2 \in \partitionsright} \indvar_{\partition_2} (\lbsol) + \sum_{\partition \in \partitions \setminus \partitionsleft \setminus \partitionsright}  \indvar_{\partition} (\lbsol),\\
        &\geq \sum_{\newmerged \in \merged} \indvar_\newmerged (\lbsol) + \card{\newsets^\merged},\\
        &\geq \card{\partitions} - \lfloor \tau\card{\scenarios} \rfloor,
    \end{align*}
    where, in order, we use Inequality~\eqref{eq:condition-removal} and~$\card{\partitions} = \card{\merged} + \card{\newsets^\merged}$. The last inequality states that the point~$\lbsol$ is feasible for Model~\eqref{eq:cclp-part} with~$\partitions$, which is a contradiction.
\end{proof}


\begin{corollary}
    \label{cor:less-hard}
    Let $\subpartitions \subseteq \partitions$ such that $\card{\subpartitions} = \sizeparam \in \naturals_{\geq 2}$. If~$\feasregion (\partitions, \sizeparam - 1) \subseteq \cap_{\partition \in \subpartitions} \probset^\partition$, then there exists a merger~$\merged$ of size $\card{\merged} = \card{\partitions} - \sizeparam + 1$ such that
    \begin{equation*}
        \objval^\LB (\merged) = \objval^\LB (\partitions).
    \end{equation*}
\end{corollary}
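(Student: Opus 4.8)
The plan is to exhibit the merger explicitly and then prove the two inequalities $\objval^\LB(\merged)\le\objval^\LB(\partitions)$ and $\objval^\LB(\merged)\ge\objval^\LB(\partitions)$ separately, reusing the contradiction scheme of Proposition~\ref{prop:union-inter} in the simpler situation where only one new subset is created. Concretely, I would set $\newmerged=\bigcup_{\partition\in\subpartitions}\partition$ and $\merged=(\partitions\setminus\subpartitions)\cup\{\newmerged\}$, so that $\merged$ is a merger of $\partitions$ with $\card{\merged}=\card{\partitions}-\sizeparam+1$ and $\newsets^\merged=\{\newmerged\}$. Since $\merged$ is a merger of $\partitions$, Corollary~\ref{prop:merge-worse} immediately gives $\objval^\LB(\merged)\le\objval^\LB(\partitions)$, so the whole argument reduces to proving the reverse inequality.

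For the reverse inequality I would argue by contradiction: suppose some $\lbsol$ is feasible for Model~\eqref{eq:cclp-part} with $\merged$ yet satisfies $\objfun(\lbsol)<\objval^\LB(\partitions)$, and show that this forces $\lbsol$ to be feasible for Model~\eqref{eq:cclp-part} with $\partitions$, contradicting the optimality of $\objval^\LB(\partitions)$. First I would establish the membership $\lbsol\in\feasregion(\partitions,\sizeparam-1)$. The point lies in $\objregion(\partitions)$ by assumption; for the counting condition I note that replacing the $\sizeparam$ subsets of $\subpartitions$ by the single set $\newmerged$ can only decrease the satisfied-subset count, because $\indvar_\newmerged(\lbsol)=1$ forces $\indvar_\partition(\lbsol)=1$ for every $\partition\in\subpartitions$, while $\indvar_\newmerged(\lbsol)=0$ bounds nothing below. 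This yields
\begin{equation*}
  \sum_{\partition\in\partitions}\indvar_\partition(\lbsol)\geq\sum_{\newmerged'\in\merged}\indvar_{\newmerged'}(\lbsol)\geq\card{\merged}-\lfloor\threshold\card{\scenarios}\rfloor=\card{\partitions}-\lfloor\threshold\card{\scenarios}\rfloor-(\sizeparam-1),
\end{equation*}
which is exactly the defining inequality of $\feasregion(\partitions,\sizeparam-1)$.

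Then I would invoke the hypothesis $\feasregion(\partitions,\sizeparam-1)\subseteq\bigcap_{\partition\in\subpartitions}\probset^\partition$ to conclude that $\lbsol\in\bigcap_{\partition\in\subpartitions}\probset^\partition$, so that $\indvar_\partition(\lbsol)=1$ for every $\partition\in\subpartitions$ and in particular $\indvar_\newmerged(\lbsol)=1$. Recounting the satisfied subsets of $\partitions$ by separating $\subpartitions$ from the unchanged subsets $\partitions\setminus\subpartitions=\merged\setminus\{\newmerged\}$, and using $\card{\merged}=\card{\partitions}-\sizeparam+1$, gives
\begin{equation*}
  \sum_{\partition\in\partitions}\indvar_\partition(\lbsol)=\sizeparam+\left(\sum_{\newmerged'\in\merged}\indvar_{\newmerged'}(\lbsol)-1\right)\geq\card{\partitions}-\lfloor\threshold\card{\scenarios}\rfloor,
\end{equation*}
so by Remark~\ref{remark:chance-cons-equi} the point $\lbsol$ is feasible for Model~\eqref{eq:cclp-part} with $\partitions$, the desired contradiction.

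The main obstacle I anticipate is the indicator bookkeeping around the many-to-one merge, and in particular pinning down the correct slack: merging $\sizeparam$ subsets into one loses at most $\sizeparam-1$ satisfied subsets in the worst case, which is precisely why the hypothesis must be stated with $\feasregion(\partitions,\sizeparam-1)$ rather than $\feasregion(\partitions,\sizeparam)$. The other point requiring care is that, unlike Proposition~\ref{prop:union-inter} where each new subset is a union of exactly two old subsets, here a single new subset absorbs all $\sizeparam$ members of $\subpartitions$; the implication ``$\indvar_\newmerged(\lbsol)=1$ forces $\indvar$ on every piece'' plays the role of Inequality~\eqref{ineq:res-set} there, and the final recount plays the role of Inequality~\eqref{ineq:orig-set}, with the two families of subsets collapsing to the single set $\newmerged$ and its complement in $\merged$.
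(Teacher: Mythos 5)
Your proof is correct, but it follows a genuinely different route from the paper's. The paper does not argue about the many-to-one merge directly: it builds the merger incrementally, as a chain of intermediate partitions $\merged_0,\dots,\merged_{\sizeparam-1}$ in which the accumulated union $\newmerged_{\mergeind-1}$ is merged with one more subset $\partition_\mergeind$ of $\subpartitions$ at each step, and it invokes Proposition~\ref{prop:union-inter} (in its two-subsets-into-one form) inductively, via the chain of inclusions $\feasregion(\merged_\mergeind,\mergeind)\subseteq\feasregion(\partitions,\sizeparam-1)\subseteq\bigcap_{\partition\in\subpartitions}\probset^\partition\subseteq\probset^{\newmerged_{\mergeind-1}}\cup\probset^{\partition_\mergeind}$. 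You instead construct the same final merger in one shot, dispose of $\objval^\LB(\merged)\le\objval^\LB(\partitions)$ by Corollary~\ref{prop:merge-worse}, and re-run the contradiction scheme of Proposition~\ref{prop:union-inter} adapted to a single new subset absorbing all $\sizeparam$ members of $\subpartitions$: your two counting displays are exactly right (the implication ``$\indvar_\newmerged(\lbsol)=1$ forces all $\sizeparam$ indicators to one'' replaces Inequality~\eqref{ineq:res-set}, and the recount with $\card{\merged}=\card{\partitions}-\sizeparam+1$ replaces Inequality~\eqref{ineq:orig-set}), and they correctly pin down why the hypothesis needs slack $\sizeparam-1$. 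What each approach buys: the paper's derivation exhibits the corollary as a formal consequence of Proposition~\ref{prop:union-inter} used as a black box, at the cost of bookkeeping intermediate mergers and of inclusions between feasible regions across iterations ($\feasregion(\merged_\mergeind,\mergeind)\subseteq\feasregion(\partitions,\sizeparam-1)$, which the paper asserts without detailed proof and which implicitly uses the induction hypothesis $\objval^\LB(\merged_\mergeind)=\objval^\LB(\partitions)$ to match the objective-level sets $\objregion$); your argument is self-contained, avoids the induction entirely, and makes the source of the slack transparent. One presentational note: the paper's definition of $\feasregion$ writes $\lfloor\threshold\card{\partitions}\rfloor$ inside the floor, while you (like the paper's own proofs) use $\lfloor\threshold\card{\scenarios}\rfloor$; the latter is clearly the intended reading given Remark~\ref{remark:chance-cons-equi}, so this is not a gap in your argument.
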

\begin{proof}
    Suppose without loss of generality that there is a numbering of the subsets of $\subpartitions$, \ie,~$\subpartitions = \{\partition_0, \dots, \partition_{\sizeparam - 1}\}$. We construct inductively a series of mergers~$\merged_\mergeind$. Let $\newmerged_1 = \partition_0 \cup \partition_1$, and~$\newmerged_\mergeind = \newmerged_{\mergeind - 1} \cup \partition_\mergeind$ for all $\mergeind \in \{1,\dots,\sizeparam - 1\}$. Then, we create $\merged_0 = \partitions \setminus \{\partition_0,  \partition_1\} \cup \{\newmerged_0\}$, and $\merged_\mergeind = \partitions \setminus \{\newmerged_{\mergeind - 1}\} \cup \{\newmerged_{\mergeind }\}$ for all $\mergeind \in \{1,\dots,\sizeparam - 1\}$. The set $\merged_{\sizeparam - 1}$ has size $\card{\merged} = \card{\partitions} - \sizeparam + 1$ and is a merger of $\partitions$ by construction. Since the property $\feasregion (\merged_\mergeind, \mergeind) \subseteq \feasregion (\partitions, \sizeparam - 1) \subseteq \cap_{\partition \in \subpartitions} \probset^\partition \subseteq (\probset^{\newmerged_{\mergeind - 1} } \cup \probset^{\partition_\mergeind})$ is satisfied for all $\mergeind \in \{1,\dots,\sizeparam - 1\}$, we can apply Proposition~\ref{prop:union-inter} in an inductive fashion and obtain $\objval^\LB (\merged_{\sizeparam - 1}) = \dots = \objval^\LB (\merged_{0}) = \objval^\LB (\partitions)$.
\end{proof}


Proposition~\ref{prop:union-inter} and Corollary~\ref{cor:less-hard} show that merging operations can be conducted without decreasing the objective value when some subsets have overlapping feasible regions. Intuitively, if a subset is such that its feasible region strongly overlaps with the feasible region of a CCSP, most of the optimal points of that CCSP are likely to lay inside of the feasible region of the overlapping subset. Hence, merging this subset with other similar subsets will not create new feasible points that induce a lower objective value.


\revtwo{Proposition~\ref{prop:union-inter} and Corollary~\ref{cor:less-hard} provide a theoretical understanding of when small-size partitions are likely to exist.} An interesting extension \revtwo{would be to} study the existence of completely sufficient partitions, as introduced in~\cite{song2015adaptive}. Indeed, a merger~$\merged$ of the trivial partition~$\partitions = \{\{\scenario_1\}, \dots, \{\scenario_{\card{\scenarios}}\}\}$ is a completely sufficient partition in the sense of~\cite{song2015adaptive}, if~$\objval^\LB (\merged) = \objval^\LB (\partitions) = v^*$.
\revtwo{Proposition~\ref{prop:union-inter} and Corollary~\ref{cor:less-hard} may not provide a numerical advantage since the set~$\feasregion (\partitions, \sizeparam)$ cannot be easily described in practice. Therefore, these properties are not used in the numerical experiments presented in Section~\ref{sec:numerical-results}.}
}


\section{Strong Partitions and Practical Strategies}
\label{sec:strong-part}

This section covers the remaining components that complement the presentation of the general \APM summarized in Algorithm~\ref{alg:adaptive-partitioning}. \rev{First, we highlight how tight big-M coefficients can be computed by taking advantage of Proposition~\ref{prop:lb-partition}.} Second, we discuss how to build partitions from scratch such that solving Model~\eqref{eq:cclp-part} yields tight lower bounds on~$\objval^*$. Third, we propose a method for refining selected subsets by maximizing the value of~$\min_{\newrefined\in\newsets^\refined} \singlecost_\newrefined$. Fourth, we explain how we select scenarios to be refined and merged to take advantage of the proposed refinement method. Finally, a heuristic for projecting an infeasible solution to the set of feasible solutions of the original model~\eqref{eq:cclp-reform} is proposed.

\subsection{Big-M Tightening}
\label{sec:big-m-ideas}
As discussed in Section~\ref{sec:introduction}, tightening big-M coefficients is of major importance for problems with indicator variables \cite{Belotti2016handling}. The value of big-M coefficients directly influences the size of the continuous relaxation of Model~\eqref{eq:cclp-reform-bigm}. Thus, it also indirectly influences the time required to solve Model~\eqref{eq:cclp-reform-bigm} to optimality. For any chance-constrained problem, the tightest value for the big-$\bigM$ parameter of the $\entry$-th constraint of a scenario $\scenario$ is obtained by solving the following optimization problem, see, \eg,~\cite{song2014chance},
\begin{subequations}
    \label{eq:exact-bigm}
    \begin{align}
        \bestbigM^\scenario_\entry = \max_{\decvar \in \decvarset} \quad
        & \ineqsys^\scenario_\entry(\decvar) \\
        \st \quad
        & \ineqsys^\scenario(\decvar)  \leq
          \bigM^\scenario (1-\indvar^\scenario), \quad \scenario \in \scenarios, \label{eq:indicator-var-bigm-comp}\\
        & \sum_{\scenario \in \scenarios} \proba_\scenario\indvar_\scenario \geq 1-\threshold, \label{eq:chance-cons-bigm-comp}\\
        & \indvar_\scenario \in \{0,1\}, \quad \scenario \in \scenarios, \label{eq:scenario-def-bigm-comp}
    \end{align}
\end{subequations}
Model~\eqref{eq:exact-bigm} is a chance-constrained problem and it can be as hard to solve as
Model~\eqref{eq:cclp-reform-bigm}. Thus, it is largely unpractical to consider such an approach for every big-$\bigM$ that needs to be computed.

The partitioned CCSP~\eqref{eq:cclp-part} can be used to obtain tight big-M coefficients. Indeed, Proposition~\ref{prop:refined-relax} states that Model~\eqref{eq:cclp-part} is a relaxation of Model~\eqref{eq:cclp-reform} which means that Model~\eqref{eq:cclp-part} can be used to find a tight approximation of $\bestbigM^\scenario_\entry$. Given a partition~$\partitions$, we state the partitioned big-M tightening model,
\begin{subequations}
    \label{eq:bigm-part}
    \begin{align}
        \bigM^\scenario_\entry = \max_{\decvar \in \decvarset} \quad
        & \ineqsys^\scenario_\entry(\decvar) \\
        \st \quad
        & \ineqsys^\scenario(\decvar)  \leq \bigM^\scenario (1-\indvar^\partition), \quad \partition\in\partitions,\scenario \in \partition, \label{eq:indicator-var-bigm-part}\\
        & \sum_{\partition \in \partitions} \proba_\partition\indvar_\partition \geq \sum_{\partition\in\partitions}\proba_\partition-\threshold, \label{eq:chance-cons-bigm-part}\\
        & \indvar_\partition \in \{0,1\}, \quad \partition \in \partitions, \label{eq:scenario-def-bigm-part}
    \end{align}
\end{subequations}
where~$\bestbigM^\scenario_\entry \leq \bigM^\scenario_\entry$. We observe that any partition~$\partitions$ of $\scenarios$ allows to produce a valid big-$\bigM$ value.

\subsection{Partitions for Tight Lower Bounds}
\label{sec:init}

In what follows, we use the minimum subset cost coefficients~$\singlecost_\partition$ introduced in Equation~\eqref{eq:single-cost-definition}. Moreover, we use the coefficients~$\singlecost_\scenario$, which are defined exactly as in Equation~\eqref{eq:single-cost-definition} but the set~$\probset^\partition$ is replaced by the set~$\probset^\scenario$.

We describe a procedure for building a partition~$\partitions$ of size~$\lfloor \threshold \abs{\scenarios} \rfloor + 1$ such that~$\objval^\LB (\partitions) \geq \objval_\quantile $, where~$\objval_\quantile$ is the well-known quantile bound~\cite{xie2018quantile}. Let~$\permutation$ be a permutation of~$\scenarios$ that satisfies~$\singlecost_{\permutation_1} \geq \dots \geq \singlecost_{\permutation_{\card{\scenarios}}}$.  In~\cite{ahmed2017nonanticipative} the quantile bound is defined as~$\objval_{\quantile} = \singlecost_{\permutation_q}$ where~$q = \min\{k \in \{1,\dots,\card{S}\} : \sum_{\permindex = 1}^{k} \proba_{\permutation_\permindex} > \threshold\}$.
\begin{proposition}[from~\cite{ahmed2017nonanticipative}]
    \label{prop:quantile-bound}
    The quantile bound~$\objval_\quantile$ is a lower bound on the optimal objective of the CCSP~\eqref{eq:cclp-reform}, \ie,~$\objval_\quantile\leq\objval^*$.
\end{proposition}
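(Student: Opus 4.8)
The plan is to start from an optimal solution~$\decvar^*$ of the CCSP~\eqref{eq:cclp-reform}, with value~$\objval^* = \objfun(\decvar^*)$, and to show that its objective value cannot fall below~$\singlecost_{\permutation_q}$. The driving idea is that the chance constraint forces~$\decvar^*$ to be feasible for at least one of the~$q$ most expensive scenarios in the ordering~$\permutation$, and feasibility for such a scenario already bounds~$\objfun(\decvar^*)$ from below by its single-scenario cost.

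First I would rewrite the chance constraint~\eqref{eq:chance-cons} evaluated at~$\decvar^*$ as a statement about the violated scenarios. Since~$\sum_{\scenario\in\scenarios}\scenproba_\scenario\boldone(\decvar^*\in\probset^\scenario)\geq 1-\threshold$, the complementary set~$\infscenarios(\decvar^*)$ satisfies~$\sum_{\scenario\in\infscenarios(\decvar^*)}\scenproba_\scenario \leq \threshold$; that is, the total probability mass of the scenarios infeasible for~$\decvar^*$ is at most~$\threshold$.

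Second I would invoke the defining property of the index~$q$: by construction~$\sum_{\permindex=1}^{q}\proba_{\permutation_\permindex} > \threshold$, where for singleton scenarios~$\proba_{\permutation_\permindex}$ coincides with the scenario probability. Comparing this with the previous step, the first~$q$ scenarios~$\{\permutation_1,\dots,\permutation_q\}$ carry strictly more than~$\threshold$ probability mass, whereas the infeasible scenarios together carry at most~$\threshold$. Hence~$\{\permutation_1,\dots,\permutation_q\}$ cannot be a subset of~$\infscenarios(\decvar^*)$, so there exists an index~$\permindex\leq q$ with~$\decvar^*\in\probset^{\permutation_\permindex}$. This pigeonhole comparison of probability masses is the crux of the argument, and the place to be careful: it is precisely the \emph{strict} inequality in the definition of~$q$ that guarantees a feasible scenario among the top~$q$.

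Finally I would chain the inequalities. For the feasible scenario~$\permutation_\permindex$ found above, we have~$\decvar^*\in\probset^{\permutation_\permindex}\cap\decvarset$, so by the single-scenario analogue of Equation~\eqref{eq:single-cost-definition} (with~$\probset^\partition$ replaced by~$\probset^\scenario$) it holds that~$\objfun(\decvar^*)\geq\singlecost_{\permutation_\permindex}$. Since~$\permutation$ orders the costs decreasingly and~$\permindex\leq q$, we have~$\singlecost_{\permutation_\permindex}\geq\singlecost_{\permutation_q}=\objval_\quantile$, whence~$\objval^*=\objfun(\decvar^*)\geq\objval_\quantile$, as claimed. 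The only subtlety beyond the counting step is to keep the direction of the sorting straight, so that membership among the first~$q$ scenarios indeed yields a cost at least~$\singlecost_{\permutation_q}$.
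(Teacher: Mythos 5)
Your proof is correct. The paper itself does not prove Proposition~\ref{prop:quantile-bound}; it imports the result from~\cite{ahmed2017nonanticipative}, and your argument is exactly the standard one used there: the chance constraint caps the probability mass of $\infscenarios(\decvar^*)$ at $\threshold$, the strict inequality defining $q$ then forces $\decvar^*$ to satisfy some scenario $\permutation_\permindex$ with $\permindex \leq q$, and the decreasing ordering of the $\singlecost_{\permutation_\permindex}$ gives $\objfun(\decvar^*) \geq \singlecost_{\permutation_\permindex} \geq \singlecost_{\permutation_q} = \objval_\quantile$. Note also that your argument does not need Assumption~\ref{ass:equiprobable}, so it establishes the bound for general rational scenario probabilities, slightly more than the equiprobable setting in which the paper invokes it.
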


We now discuss how to obtain a partition~$\partitions$ such that~$\objval^\LB (\partitions) \geq \objval_\quantile$. We use the permutation~$\permutation$ introduced earlier. First, we fix the size of~$\partitions$ to~$\lfloor \threshold \abs{\scenarios} \rfloor + 1$. Second, we assign the scenarios~$\permutation_i$ to the \rev{set~$\partition_j$, where $j \equiv i \mod \card{\partitions}$} for all~$i\in\{1,\dots,\card{\scenarios}\}$. In other words, scenario~$\permutation_1$ is assigned to set~$\partition_1$, scenario~$\permutation_2$ is assigned to set~$\partition_2$, and so forth, until the set~$\partition_{\card{\partitions}}$ is reached. This assignment is continued at~$\partition_1$ until all scenarios are assigned to a set.
\begin{proposition}
    \label{prop:stronger-initial}
    If~$\partitions$ is created by a sequential assignment of the ordering given by the permutation~$\permutation$ into~$\lfloor \threshold \abs{\scenarios} \rfloor + 1$ disjoint sets, then
    \begin{equation*}
         \objval^\LB(\partitions) \geq \objval_\quantile.
    \end{equation*}
\end{proposition}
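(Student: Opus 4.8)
The plan is to exploit the specific size~$\card{\partitions} = \lfloor \threshold \card{\scenarios} \rfloor + 1$ to collapse Model~\eqref{eq:cclp-part} into a plain minimization of subset costs, and then to bound each subset cost from below by the quantile value. First I would invoke Remark~\ref{remark:chance-cons-equi}: under Assumption~\ref{ass:equiprobable} the chance constraint~\eqref{eq:chance-cons-part} becomes $\sum_{\partition\in\partitions}\indvar_\partition \geq \card{\partitions} - \lfloor \threshold\card{\scenarios}\rfloor$, and with the prescribed size this right-hand side equals~$1$. Hence a point~$\decvar$ is feasible for Model~\eqref{eq:cclp-part} with~$\partitions$ exactly when it lies in~$\decvarset \cap \bigcup_{\partition\in\partitions}\probset^\partition$, and minimizing~$\objfun$ over this union yields
\begin{equation*}
  \objval^\LB(\partitions) = \min_{\partition\in\partitions}\singlecost_\partition,
\end{equation*}
by the definition of~$\singlecost_\partition$ in Equation~\eqref{eq:single-cost-definition}.

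Next I would show that $\min_{\partition\in\partitions}\singlecost_\partition \geq \objval_\quantile$. The round-robin assignment places, for each $j\in\{1,\dots,\card{\partitions}\}$, the scenario~$\permutation_j$ into the subset~$\partition_j$, so $\permutation_j\in\partition_j$. Because $\probset^{\partition_j} = \bigcap_{\scenario\in\partition_j}\probset^\scenario \subseteq \probset^{\permutation_j}$, minimizing over a smaller feasible set can only raise the optimal value, giving $\singlecost_{\partition_j} \geq \singlecost_{\permutation_j}$. Since the permutation orders the single-scenario costs as $\singlecost_{\permutation_1}\geq\dots\geq\singlecost_{\permutation_{\card{\scenarios}}}$ and $j\leq\card{\partitions}=q$ (where $q=\lfloor\threshold\card{\scenarios}\rfloor+1$ under equiprobability, since $\sum_{\permindex=1}^{k}\proba_{\permutation_\permindex}=k/\card{\scenarios}$), we have $\singlecost_{\permutation_j}\geq\singlecost_{\permutation_q}=\objval_\quantile$. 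Chaining these inequalities over all $j$ gives $\singlecost_{\partition_j}\geq\objval_\quantile$ for every subset, whence $\objval^\LB(\partitions)=\min_j\singlecost_{\partition_j}\geq\objval_\quantile$.

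The only delicate points are bookkeeping rather than genuine obstacles. I would make sure that the index identity $q = \card{\partitions}$ holds, which is precisely where the size $\lfloor\threshold\card{\scenarios}\rfloor+1$ and Assumption~\ref{ass:equiprobable} are used together; and I would verify the boundary case in which $\threshold\card{\scenarios}$ is an integer, so that the strict inequality in the definition of~$q$ still yields $q=\lfloor\threshold\card{\scenarios}\rfloor+1$. The substantive step is the reduction $\objval^\LB(\partitions)=\min_{\partition\in\partitions}\singlecost_\partition$; everything else follows from the monotonicity of~$\singlecost$ under set intersection and from the fact that the round-robin rule guarantees that each of the~$\card{\partitions}$ most expensive scenarios seeds a distinct subset.
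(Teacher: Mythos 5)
Your proof is correct and takes essentially the same approach as the paper's: Remark~\ref{remark:chance-cons-equi} reduces the chance constraint to requiring at least one satisfied subset, the round-robin rule guarantees that each subset contains one of the $q$ most expensive scenarios, and monotonicity of $\singlecost$ under intersection together with the ordering of $\permutation$ gives $\singlecost_\partition \geq \singlecost_{\permutation_q} = \objval_\quantile$ for every $\partition\in\partitions$. Your write-up is in fact slightly more careful than the paper's (you state the exact identity $\objval^\LB(\partitions)=\min_{\partition\in\partitions}\singlecost_\partition$ rather than only the needed inequality, and you verify $q=\lfloor\threshold\card{\scenarios}\rfloor+1$ including the integer boundary case), but the underlying argument is identical.
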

\begin{proof}
    From Remark~\ref{remark:chance-cons-equi} it follows that
    \begin{equation*}
        \sum_{\partition\in\partitions} \indvar_\partition  \geq
        \abs{\partitions} -\lfloor  \threshold \abs{\scenarios}  \rfloor = 1.
    \end{equation*}
    Since we sequentially dispatch the scenarios based on~$\singlecost_\scenario$, each set~$\partition\in\partitions$ will contain at least one scenario~$\permutation_\permindex$ with~$\permindex \in \{1, \dots, \lfloor \threshold \abs{\scenarios}  \rfloor + 1 \}$. When Assumption~\ref{ass:equiprobable} holds, we know that~$q = \lfloor \threshold \abs{\scenarios}  \rfloor + 1$ and it follows that
    \begin{equation*}
        \objval^\LB(\partitions) \geq \min_{\permindex \in \{1, \dots, \lfloor \threshold \abs{\scenarios}  \rfloor + 1 \}} \singlecost_{\permutation_l} = \singlecost_{\permutation_q} = \objval_\quantile.
    \end{equation*}
\end{proof}
Proposition~\ref{prop:stronger-initial} implies that if~$\partitions$ is obtained by a sequential assignment of the permutation~$\permutation$ into~$\lfloor \threshold \abs{\scenarios} \rfloor + 1$ disjoint sets, then~$\objval^\LB(\partitions) \geq \objval_\quantile$. This result, when combined with Theorems~\ref{thm:strictly-incr-lower} and~\ref{thm:strict-increase-merge}, ensures a strict increase of a tight lower bound produced by Algorithm~\ref{alg:adaptive-partitioning}, while keeping the size of the considered partitions as small as possible.

\subsection{Refinements that Promote Merging}
\label{sec:accurate-split}
We now propose a model that, when solved to optimality, splits the subset~$\newrefined_1$ into the subsets~$\newrefinedleft$ and~$\newrefinedright$ as described in Algorithm~\ref{alg:refinement}. We propose to split~$\newrefined_1$ in a way that maximizes the value of~$\min_{\newrefined \in \{\newrefinedleft, \newrefinedright\}} \singlecost_{\newrefined}$. Two observations support this approach. First, as described in Proposition~\ref{prop:refinement-bound}, when the subsets~$\newrefinedleft$ and~$\newrefinedright$ are such that~$\min_{\newrefined \in \{\newrefinedleft, \newrefinedright\}} \singlecost_{\newrefined} > \objval^{\rev{\LB}} (\partitions)$ we know that a merger~$\merged$ of size~$\card{\partitions}$ can be constructed so that~$\objval^\LB(\merged) > \objval^\LB(\partitions)$. Second, it is preferable that the value of~$\min_{\newrefined \in \{\newrefinedleft, \newrefinedright\}} \singlecost_{\newrefined}$ is as large as possible because the feasible regions~$\probset^{\newrefinedleft}$ and~$\probset^{\newrefinedright}$ may be selected in the optimal solution of Model~\eqref{eq:cclp-part} with~$\refined$. Indeed, if~$\indvar_{\newrefinedleft}$ or~$\indvar_{\newrefinedright}$ are equal to one for the optimal solution of Model~\eqref{eq:cclp-part} with~$\refined$, then~$\objval^{\rev{\LB}} (\refined) > \min(\singlecost_{\newrefinedleft},\singlecost_{\newrefinedright})$.

A naive way to maximize~$\min_{\newrefined \in \{\newrefinedleft, \newrefinedright\}} \singlecost_{\newrefined}$ is computing its value for every possible split of~$\newrefined_1$. The number of models that need to be solved is given by the Stirling numbers of the second kind~\cite{graham1989concrete}. Hence, the complexity of this naive approach grows exponentially as the number of scenarios inside the subset~$\newrefined_1$ increases.

We propose an optimization-based approach for chance-constrained linear problems that can be used when the decision variables are continuous or binary. Our approach is based on formulating the splitting problem as a bi-level optimization problem~\cite{dempe2002foundations}. The upper-level yields an assignment of scenarios to the subsets~$\newrefinedleft$ and~$\newrefinedright$ while maximizing the value of~$\min_{\newrefined \in \{\newrefinedleft, \newrefinedright\}} \singlecost_{\newrefined}$. The lower-level computes the subset costs~$\singlecost_{\newrefinedleft} $ and~$\singlecost_{\newrefinedright}$ given a scenario assignment. For every~$\scenario \in \newrefined_1 $ we introduce two binary assignment variables~$\linkvar_{\scenario \newrefinedleft}$ and~$\linkvar_{\scenario \newrefinedright}$ to track whether this scenario is assigned to subset~$\newrefinedleft$ or~$\newrefinedright$. The upper-level problem is given by
\begin{subequations}
  \label{eq:accurate-objective-increase}
  \begin{align}
      \singlecost_{\text{div}} = \max_{\linkvar} \quad
    & \min_{\newrefined \in \{\newrefinedleft, \newrefinedright\}} \singlecost_{\newrefined} (\linkvar) \\
    \st \quad
    & \sum_{\newrefined \in \{\rev{\newrefinedleft}, \newrefinedright\}} \linkvar_{ \scenario \newrefined} = 1, \quad \scenario \in \rev{\newrefined_1}, \label{eq:forced-assignement}\\
    & \sum_{\scenario \in \newrefined_1 \cap \infscenarios(\lbsol)} \linkvar_{\scenario \newrefined} \geq 1, \quad \newrefined \in \{\newrefinedleft, \newrefinedright\}, \label{eq:forced-inf-assignement}\\
      & \rev{\linkvar_{ \scenario \newrefined} \in \{0,1\}, \quad \scenario \in \newrefined_1, \, \newrefined \in \{\newrefinedleft, \newrefinedright\}}.
  \end{align}
\end{subequations}
Constraint~\eqref{eq:forced-assignement} states that every scenario is assigned to a subset. Constraint~\eqref{eq:forced-inf-assignement} ensures that at least one~$\scenario \in \infscenarios (\lbsol)$ is assigned to each subset.

\rev{When the chance-constrained linear problem involves only continuous variables,} the value of~$\singlecost_{\newrefined}(\linkvar)$ for~$\newrefined \in \{ \newrefinedleft, \newrefinedright\}$ is the solution of the lower-level problem,
\begin{subequations}
    \label{eq:subset-objective-increase}
    \begin{align}
        \singlecost_{\newrefined}(\linkvar) = \min_{\decvar} \quad
                  & \cost^\top \decvar\\
        \st \quad &  \varmat^\decvarset \decvar \geq \constmat^\decvarset, \label{eq:decvarset-lp} \\
                  & \linkvar_{\scenario \newrefined}\varmat^\scenario \decvar \geq \linkvar_{\scenario \newrefined}\constmat^\scenario, \quad \scenario\in\newrefined_1. \label{eq:asset-lp}
    \end{align}
\end{subequations}
Model~\eqref{eq:subset-objective-increase} computes the single subset cost~$\singlecost_{\newrefined}$ \rev{as defined in Equation~\eqref{eq:single-cost-definition}} for a set of assignment variables~$\linkvar$. Constraint~\eqref{eq:decvarset-lp} is the equivalent of~$\decvar\in\decvarset$ when we assume that Model~\eqref{eq:cclp-reform-bigm} is linear. Constraint~\eqref{eq:asset-lp} is the equivalent of~$\decvar \in \probset^\scenario$ when we assume that Model~\eqref{eq:cclp-reform-bigm} is linear. \rev{If integer variables are present, Model~\eqref{eq:subset-objective-increase} is a linear relaxation and provides a lower bound on the true value of the single subset cost.}
\begin{proposition}
    \label{prop:single_level_acc_obj}
    Model~\eqref{eq:accurate-objective-increase} can be reformulated as a single-level problem of the form
  \begin{subequations}
    \label{eq:accurate-objective-incrgease-single}
    \begin{align}
      \singlecost_{\text{div}} = \max_{\linkvar,\dualvar} \quad
      & \intervar \\
      \st \quad
      &  \intervar \leq (\constmat^{\decvarset})^\top\dualvar^{\decvarset\newrefined}
        + \sum_{\scenario\in \newrefined_1}
        (\constmat^\scenario)^\top\dualvar^{\scenario\newrefined}, \quad
        \newrefined\in\{\newrefinedleft,\newrefinedright\},\\
      & (\varmat^\decvarset)^\top \dualvar^{\decvarset\newrefined}  +
        \sum_{\scenario\in\newrefined_1}
        (\varmat^\scenario)^\top
        \dualvar^{\scenario\newrefined}= \cost, \quad
        \newrefined\in\{\newrefinedleft,\newrefinedright\},\\
      & 1 - \linkvar_{\scenario\newrefined} = \boldone (\dualvar^{\scenario\newrefined}_\consindex
        = 0), \quad \scenario \in \newrefined_1,\consindex\in\consindexset(\scenario),\newrefined\in\{\newrefinedleft,\newrefinedright\},\\
      &\sum_{\newrefined \in \{\newrefinedleft, \newrefinedright\}} \linkvar_{ \scenario \newrefined} = 1, \quad \scenario \in
        \newrefined_1,\\
      & \sum_{\scenario \in \newrefined_1 \cap \infscenarios(\lbsol)} \linkvar_{\scenario \newrefined} \geq 1, \quad \newrefined \in \{\newrefinedleft, \newrefinedright\},\\
      & \rev{\linkvar_{ \scenario \newrefined} \in \{0,1\}, \quad \scenario \in \newrefined_1, \, \newrefined \in \{\newrefinedleft, \newrefinedright\}},\\
      & \dualvar\geq0.
    \end{align}
  \end{subequations}
\end{proposition}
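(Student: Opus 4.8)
The plan is to convert the bilevel program in three stages: an epigraph rewriting of the outer $\max$-$\min$, an LP dualization of the two lower-level subset-cost problems, and a linearization of the bilinear products that this dualization creates. Throughout I would assume that for every admissible assignment $\linkvar$ the lower-level LP~\eqref{eq:subset-objective-increase} is feasible and bounded for both $\newrefined\in\{\newrefinedleft,\newrefinedright\}$, so that LP strong duality applies. Feasibility is ensured by Constraint~\eqref{eq:forced-assignement} together with the nonemptiness of the feasible region of the subset $\newrefined_1$ being split, while boundedness follows from $\singlecost_\newrefined(\linkvar)$ being a well-defined minimum cost in the sense of~\eqref{eq:single-cost-definition}.

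First I would handle the outer objective $\max_\linkvar \min_{\newrefined}\singlecost_\newrefined(\linkvar)$. Introducing the epigraph variable $\intervar$, this is equivalent to
\begin{equation*}
  \max_{\linkvar,\intervar}\ \intervar \quad \st \quad \intervar \leq \singlecost_\newrefined(\linkvar),\ \ \newrefined\in\{\newrefinedleft,\newrefinedright\},
\end{equation*}
with $\linkvar$ still subject to the assignment constraints~\eqref{eq:forced-assignement}--\eqref{eq:forced-inf-assignement}. Since each $\singlecost_\newrefined(\linkvar)$ is itself the optimal value of a minimization, the constraint $\intervar\leq\singlecost_\newrefined(\linkvar)$ cannot be imposed directly against a minimum. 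I would therefore replace $\singlecost_\newrefined(\linkvar)$ by its LP dual, which is a \emph{maximization}: the condition $\intervar\leq\singlecost_\newrefined(\linkvar)$ then becomes the requirement that there exist a dual-feasible point whose dual objective dominates $\intervar$. Promoting the dual vectors $\dualvar^{\decvarset\newrefined}$ and $\dualvar^{\scenario\newrefined}$ to decision variables of the single-level model captures exactly this condition, because maximizing $\intervar$ drives it up to the dual optimum, which by strong duality equals $\singlecost_\newrefined(\linkvar)$.

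Second I would write the LP dual of~\eqref{eq:subset-objective-increase} for fixed $\linkvar$. Assigning $\dualvar^{\decvarset\newrefined}\geq0$ to Constraint~\eqref{eq:decvarset-lp} and $\dualvar^{\scenario\newrefined}\geq0$ to the scenario rows~\eqref{eq:asset-lp}, the dual reads
\begin{align*}
  \max_{\dualvar\geq0}\quad & (\constmat^\decvarset)^\top\dualvar^{\decvarset\newrefined} + \sum_{\scenario\in\newrefined_1}\linkvar_{\scenario\newrefined}(\constmat^\scenario)^\top\dualvar^{\scenario\newrefined} \\
  \st\quad & (\varmat^\decvarset)^\top\dualvar^{\decvarset\newrefined} + \sum_{\scenario\in\newrefined_1}\linkvar_{\scenario\newrefined}(\varmat^\scenario)^\top\dualvar^{\scenario\newrefined} = \cost,
\end{align*}
and its optimal value equals $\singlecost_\newrefined(\linkvar)$. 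Substituting this dual into the epigraph constraints from the previous step already produces the upper-bound and equality constraints of~\eqref{eq:accurate-objective-incrgease-single}, except that the products $\linkvar_{\scenario\newrefined}\dualvar^{\scenario\newrefined}$ are still present in both the objective and the equality constraint.

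The main obstacle is removing these bilinear terms, and this is where the indicator constraints $1-\linkvar_{\scenario\newrefined}=\boldone(\dualvar^{\scenario\newrefined}_\consindex=0)$ enter, read in the on/off sense that scenario $\scenario$ not being assigned to $\newrefined$ forces the whole dual block $\dualvar^{\scenario\newrefined}=0$. The justification I would give is twofold: when $\linkvar_{\scenario\newrefined}=0$ the corresponding row of~\eqref{eq:asset-lp} degenerates to $0\geq0$, so its dual variable is attached to a vacuous constraint and may be fixed to zero without changing the dual optimum; and when $\linkvar_{\scenario\newrefined}=1$ the product is trivially $\dualvar^{\scenario\newrefined}$. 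In both cases the identity $\linkvar_{\scenario\newrefined}\dualvar^{\scenario\newrefined}=\dualvar^{\scenario\newrefined}$ holds, so the bilinear coefficients disappear from the dual objective and the dual equality constraint, yielding precisely the linear constraints of~\eqref{eq:accurate-objective-incrgease-single}. Carrying the assignment constraints~\eqref{eq:forced-assignement}--\eqref{eq:forced-inf-assignement}, the integrality of $\linkvar$, and the sign restriction $\dualvar\geq0$ unchanged then completes the single-level reformulation. The delicate point to verify carefully is exactly that fixing the unused dual variables to zero preserves both feasibility and optimality of the dualized lower-level LPs, so that the linearized model has the same optimal value $\singlecost_\divided$ as the original bilevel program.
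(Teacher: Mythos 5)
Your proof is correct and takes essentially the same approach as the paper's: dualize the lower-level LPs via strong duality, eliminate the bilinear products $\linkvar_{\scenario\newrefined}\dualvar^{\scenario\newrefined}$ by replacing them with $\dualvar^{\scenario\newrefined}$ plus the indicator constraints, and capture the outer max--min with the epigraph variable $\intervar$. The only differences are the order of the steps (you introduce the epigraph variable before dualizing, the paper after) and that your explicit attention to feasibility/boundedness of the lower-level LPs and to why fixing unused dual blocks to zero preserves the optimum is slightly more careful than the paper's presentation.
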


\begin{proof}
    Since Model~\eqref{eq:subset-objective-increase} is linear, strong duality holds. Let~$\dualvar$ be the dual variables of Model~\eqref{eq:subset-objective-increase}. The dual of Model~\eqref{eq:subset-objective-increase} for~$\newrefined \in \{ \newrefinedleft, \newrefinedright\}$ reads
    \begin{subequations}
      \label{eq:dual-subset-objective-increase}
      \begin{align}
          \singlecost_{\newrefined}(\linkvar) = \max_{\dualvar} \quad
        & (\constmat^\decvarset)^\top\dualvar^\decvarset + \sum_{\scenario\in
          \rev{\newrefined_1}}
          \linkvar_{\scenario \newrefined}(\constmat^\scenario)^\top\dualvar^\scenario\\
        \st \quad
        &  (\varmat^\decvarset)^\top \dualvar^\decvarset +
          \sum_{\scenario\in
          \rev{\newrefined_1}}
          \linkvar_{\scenario \newrefined} (\varmat^\scenario)^\top
          \dualvar^\scenario= \cost,\\
        & \dualvar \geq 0.
      \end{align}
    \end{subequations}
    \rev{Let~$\consindexset(\scenario)$ be the set of constraint indices for scenario~$\scenario\in\scenarios$. For every~$\consindex \in \consindexset(\scenario)$, the dual variable~$\dualvar^\scenario_\consindex$ in Model~\eqref{eq:dual-subset-objective-increase} is always multiplied by the same binary variable~$\linkvar_{\scenario \newrefined}$. The product $\linkvar_{\scenario \newrefined} \cdot \dualvar^\scenario_\consindex$ is equal to~$\dualvar^\scenario_\consindex$ when~$\linkvar_{\scenario \newrefined} = 1$ and $0$ when~$\linkvar_{\scenario \newrefined} = 0$. Hence, we can replace~$\linkvar_{\scenario \newrefined} \cdot \dualvar^\scenario_\consindex$ with~$\dualvar^\scenario_\consindex$ and add the constraint~$1 - \linkvar_{\scenario\newrefined} = \boldone (\dualvar^\scenario_\consindex = 0)$. Model~\eqref{eq:dual-subset-objective-increase} now reads}
    \begin{subequations}
      \label{eq:dual-subset-objective-increase-reform}
      \begin{align}
      \singlecost_{\newrefined}(\linkvar) = \max_{\dualvar} \quad
        & (\constmat^\decvarset)^\top\dualvar^\decvarset
          + \sum_{\scenario\in \rev{\newrefined_1}}
          (\constmat^\scenario)^\top\dualvar^\scenario \\
        \st \quad
        &  (\varmat^\decvarset)^\top \dualvar^\decvarset +
          \sum_{\scenario\in \rev{\newrefined_1}}
           (\varmat^\scenario)^\top
          \dualvar^\scenario= \cost,\\
        & 1 - \linkvar_{\scenario\rev{\newrefined}} = \boldone (\dualvar^\scenario_\consindex
          = 0), \quad \scenario \in \rev{\newrefined_1},\consindex\in\consindexset(\scenario),\\
        & \dualvar \geq 0.
      \end{align}
    \end{subequations}

    Further, we introduce the variable~$\intervar = \min_{\newrefined \in \{\newrefinedleft, \newrefinedright\}} \singlecost_{\newrefined} (\linkvar)$. By construction,~$\intervar\leq\singlecost_{\newrefined}(\linkvar)$ holds for all~$\newrefined \in \{\newrefinedleft, \newrefinedright\}$. Hence, Model~\eqref{eq:accurate-objective-increase} is equivalent to
    \begin{subequations}
      \label{eq:accurate-objective-increase-reform}
      \begin{align}
          \singlecost_{\text{best}} = \max_{\linkvar} \quad
        & \intervar \\
        \st \quad
        & \intervar \leq \singlecost_{\newrefined}(\linkvar), \quad \newrefined \in \{\newrefinedleft, \newrefinedright\},\\
        & \sum_{\newrefined \in \{\newrefinedleft, \newrefinedright\}} \linkvar_{ \scenario \newrefined} = 1, \quad \scenario \in
          \rev{\newrefined_1},\\
        & \sum_{\scenario \in \newrefined_1 \cap \infscenarios(\lbsol)} \linkvar_{\scenario \newrefined} \geq 1, \quad \newrefined \in \{\newrefinedleft, \newrefinedright\},\\
      & \rev{\linkvar_{ \scenario \newrefined} \in \{0,1\}, \quad \scenario \in \newrefined_1, \, \newrefined \in \{\newrefinedleft, \newrefinedright\}}.
      \end{align}
    \end{subequations}
    Finally, we replace~$\singlecost_{\newrefined}(\linkvar)$ in Model~\eqref{eq:accurate-objective-increase-reform} by Model~\eqref{eq:dual-subset-objective-increase-reform}, which yields Model~\eqref{eq:accurate-objective-incrgease-single}.
\end{proof}
\rev{Proposition~\ref{prop:single_level_acc_obj} provides an efficient method to compute refinements that maximize the value of~$\min_{\newrefined \in \{\newrefinedleft, \newrefinedright\}} \singlecost_{\newrefined}$. The solution of Model~\eqref{eq:accurate-objective-incrgease-single} is such that $\min_{\newrefined \in \{\newrefinedleft, \newrefinedright\}} \singlecost_{\newrefined}(\linkvar^*) = \singlecost_{\text{div}}$ when all variables are continuous and $\min_{\newrefined \in \{\newrefinedleft, \newrefinedright\}} \singlecost_{\newrefined} (\linkvar^*) \geq \singlecost_{\text{div}}$ when integer variables are present. In both cases, if the condition~$\singlecost_{\text{div}} > \objval (\partitions)$ is satisfied, then a merger~$\merged$ of size~$\card{\partitions}$ that ensures~$\objval^\LB(\merged) > \objval^\LB(\partitions)$ can be constructed.}

\begin{remark}
    \rev{It is straightforward to generalize} the single-level reformulation of Model~\eqref{eq:accurate-objective-incrgease-single} to the case where a subset is split into more than two subsets.
  \end{remark}

\subsection{Subset Selection for Refinement and Merging}
\label{sec:subs-select-refin}

We now explain how to select the subsets used in refinement and merging operations at each iteration of our adaptive method. Our methods aim to maximize the number of merging operations that are carried out.

We know from Proposition~\ref{prop:refinement-bound} that any refinement such that~$\min_{\newrefined \in \{\newrefinedleft, \newrefinedright\}} \singlecost_{\newrefined} > \objval^{\rev{\LB}} (\partitions)$ allows to perform a subsequent merge operation. Hence, when possible, we always select a subset~$\newrefined_1\in \infrefined (\lbsol)$ that satisfies~$\singlecost_{\divided} > \objval^\LB (\partitions)$ where~$\singlecost_{\divided}$ is obtained by solving Model~\eqref{eq:accurate-objective-incrgease-single}. When several subsets satisfy the condition~$\singlecost_{\divided} > \objval^\LB (\partitions)$, we select the one with the smallest~$\singlecost_{\divided}$ value. When no subset satisfies the condition, \ie, all subsets are such that~$\singlecost_{\divided} \le \objval^\LB (\partitions)$, we select~$\newrefined_1 = \argmax_{\newrefined \in \infrefined (\lbsol)} \singlecost_{\divided}$.

When merging, we select the feasible subsets~$\newmerged \in \feasmerged(\lbsol)$ with the largest single subset costs. When an infeasible subset is necessary to construct the merger (see the second case in Algorithm~\ref{alg:merge}), we select the subset with the largest value for~$\singlecost_\scenario$. This strategy ensures that~$\objval^\LB(\merged)$ is large when the newly created subset~$\newmerged_{\extrasize+2}$ is selected in the next iteration.

\subsection{Recovering Feasible Solutions}
\label{sec:projection}
We propose a simple projection heuristic to recover valid primal solutions when the optimal solution~$\lbsol$ of Model~\eqref{eq:cclp-part} is not feasible for Model~\eqref{eq:cclp-reform}. We construct this feasible point~$\ubsol$ by selecting additional scenarios to be satisfied until Constraint~\eqref{eq:chance-cons2} is valid. We introduce the set~$\projset$ which contains all the scenarios that are satisfied for obtaining the point~$\ubsol$. Initially, this set~$\projset$ is composed of every~$\scenario \in \feasscenarios (\lbsol)$. Then, scenarios~$\scenario \in \infscenarios(\lbsol)$ are greedily added to~$\projset$ based on their feasibility \wrt~$\lbsol$. That is, we iteratively add the scenario with the smallest value of~$\max_{\consindex \in \consindexset (\scenario)} \ineqsys^\scenario_\consindex(\lbsol)$ to~$\projset$ until~$\card{\projset} = \card{\scenarios} - \lfloor \threshold \card{\scenarios} \rfloor$. Finally, we obtain~$\ubsol$ by solving the following model

\begin{subequations}
  \begin{align*}
      \objval_{\text{proj}} = \min_{\decvar \in \decvarset} \quad
    & \objfun(\decvar) \\
    \st \quad
    & \ineqsys^\scenario(\decvar) \leq 0,
      \quad \scenario \in \projset.
  \end{align*}
\end{subequations}

Further, each time the value of~$\singlecost$ is computed for a scenario or a subset we check if the associated solution is feasible for Model~\eqref{eq:cclp-reform-bigm} and improves the current best upper bound.


\section{Numerical Study}
\label{sec:numerical-results}

This section presents the numerical results of the \APM when compared to state-of-the-art methods. To assess the value of the \APM and its components, we run repeated experiments on CCSP instances taken from the literature. Our numerical study investigates the performance of our method by measuring the time taken to solve an instance to optimality, or the optimality gap when the instance cannot be solved to optimality in the allocated time. To understand the strengths and weaknesses of our method, we also study a single run of the \APM in detail. This allows us to analyze the occurrence and the effectiveness of the strategies proposed in Section~\ref{sec:strong-part}. For instance, we show how the lower and upper bounds evolve and how the size of the partition evolves as the number of iterations increases.

All computations have been executed on a remote server. Each experiment is run on a single core of an Intel Gold 6148 Skylake with \SI{2.4}{\giga\hertz} and is allocated \SI{16}{\giga\byte}~RAM. A time limit of \SI{120}{\minute} is enforced. Our implementation is made with the programming language \Python. All optimization models are solved using \Gurobi~10.0.3. The code used to produce all numerical results is publicly available at the online repository: \url{https://github.com/alexforel/AdaptiveCC}.

\subsection{Experimental Setting}
We follow the experimental setting of recent works on CCSPs in which multi-dimensional knapsack problems with either binary or continuous variables are used. All instances are generated according to the method described in~\cite{song2014chance} and \cite{ahmed2017nonanticipative}. Each chance-constrained instance is created by sampling a set of scenarios from a deterministic instance. A scenario is obtained by perturbing the left-hand side constraint matrix of the original deterministic instance. We consider three base instances:~\textsf{mk-10-10},~\textsf{mk-20-10}, and~\textsf{mk-40-30}, which have~10,~10, and~30 constraints per scenario, respectively, as well as~10,~20, and~40 decision variables, respectively. We generate five perturbed instances per deterministic instance.

\subsubsection{Implementation and Benchmarks}
The final adaptive method, referred to as~$\Part_{\final}$, is implemented following the description of the \APM displayed in Algorithm~\ref{alg:adaptive-partitioning}. Moreover, we implemented all the strategies discussed in Section~\ref{sec:strong-part}. Obtaining big-M coefficients using Model~\eqref{eq:bigm-part} in each iteration of the \APM is too time-consuming. Therefore, we use the less computationally heavy approach of \cite{Belotti2016handling} each time a new model is solved. In preliminary experiments, we observed that a large amount of big-M constraints~\eqref{eq:chance-cons-part} are unnecessary for solving a reduced model~\eqref{eq:cclp-part}. This is explained by the fact that each individual scenario is obtained by perturbing a deterministic set of constraints. We observe that, when scenarios are inside the same subset of a partition, a constraint of a scenario is often dominated by a constraint of another scenario. These dominated constraints are thus not necessary for representing the feasible set of the reduced model~\eqref{eq:cclp-part}. As a consequence, the computational performance of the \APM improves when \rev{implementing the big-M scenario constraints~\eqref{eq:chance-cons-part} as lazy constraints. Lazy constraints are initially inactive and placed in a lazy constraint pool. They are activated when a feasible solution is found that violates them, causing the solution to be discarded and the violated constraints to be added to the model. This is implemented by setting \Gurobi's \texttt{Lazy} parameter to~1 for all big-M constraints.}

We compare the \APM with \rev{two} benchmarks \rev{that} the big-M formulation of the original CCSP as given in Model~\eqref{eq:cclp-reform-bigm}. These two methods differ in the way the big-M parameters are computed. The first benchmark, referred to as \enquote{\texttt{Song Big-M}}, follows the method proposed in~\cite{song2014chance} and also evaluated in~\cite{ahmed2017nonanticipative}. This method is tailored to chance-constrained packing problems and therefore to the multi-dimensional chance-constrained knapsack problems that we consider. It is based on solving a series of single-dimensional continuous knapsack problems from which extremely tight upper bounds on the big-M coefficients can be computed using a quantile argument. The second benchmark, referred to as \enquote{\texttt{Belotti Big-M}}, is based on the problem-agnostic method of \cite{Belotti2016handling}. To obtain the big-M coefficients, a single-dimensional knapsack problem is solved for each scenario/constraint combination using a valid lower bound on the original CCSP. For both benchmarks, we do not see a distinct improvement when \Gurobi's \texttt{Lazy} parameter is set to 1 for big-M constraints~\eqref{eq:indicator-var-bigm}, and therefore leave it to its default value. To allow for a fair comparison between the results of different methods we include the time needed to compute the big-M coefficients in the total computation time.

\subsubsection{Big-M Computation Time}
Obtaining tight big-M coefficients can require a large amount of time. In particular, the method outlined in \cite{song2014chance} requires solving~$\card{\scenarios}^2 \card{\consindexset}^2$ single-dimensional continuous knapsack problems and thus scales quadratically with the number of constraints and scenarios. Our implementation of \cite{song2014chance} is coded in C\texttt{++} and interfaced with our code using Cython \cite{behnel2011cython}. Further, it exploits the symmetry of the problem to avoid performing unnecessary sorting operations. The time needed to obtain the big-M coefficients for our two benchmarks is given in Table~\ref{table:big_m_times}.

\begin{table}[!ht]
    \caption{Computation time needed to obtain all big-M parameters.}
    \label{table:big_m_times}
    \centerline{
    \begin{tabular}{c*{4}{r}}
        \toprule
        & & \multirow{2}{*}{\texttt{Song Big-M}} & \multicolumn{2}{c}{\texttt{Belotti Big-M}}\\
        \cmidrule(lr){4-5}
        & & & Continuous & Binary\\
        \midrule
        \csvreader[column count=5, no head, late after line=\\, respect none]
        {tables/csv/ccmknap-10-10-big-m-times.csv}
        {1=\acol,2=\bcol,3=\ccol,4=\d,5=\e}
        {\multirow{4}{*}{\acol} & \bcol & \ccol & \d & \e}
        \midrule
        \csvreader[column count=5, no head, late after line=\\, respect none]
        {tables/csv/ccmknap-20-10-big-m-times.csv}
        {1=\acol,2=\bcol,3=\ccol,4=\d,5=\e}
        {\multirow{4}{*}{\acol} & \bcol & \ccol & \d & \e}
        \midrule
        \csvreader[column count=5, no head, late after line=\\, respect none]
        {tables/csv/ccmknap-40-30-big-m-times.csv}
        {1=\acol,2=\bcol,3=\ccol,4=\d,5=\e}
        {\multirow{4}{*}{\acol} & \bcol & \ccol & \d & \e}
        \bottomrule
    \end{tabular}
    }
\end{table}

As expected, the general method of~\cite{Belotti2016handling} is very fast. In contrast, the method of~\cite{song2014chance}, which is tailored for multi-dimensional knapsack problems, does not scale well with the problem size. It cannot terminate within the time limit for large problems. More precisely, big-M coefficients cannot be computed for the instance~\textsf{mk-40-30} when there are more than~$\card{\scenarios} = 3000$ scenarios. This can be anticipated from the time needed to compute big-M coefficients when~$\card{\scenarios} = 1000$ since increasing the scenario number threefold increases the computation times by a factor of nine. Still, we want to emphasize that our implementation of \cite{song2014chance} is particularly efficient: the times needed to compute the big-M coefficients are approximately three times smaller than those presented in \cite{ahmed2017nonanticipative}.

\subsection{Optimal Solutions and Bounds}
The main experimental results are presented in Table~\ref{table:continuous_results} and Table~\ref{table:binary_results} for instances with continuous and binary variables, respectively. The tables are produced using the method described in~\cite{ahmed2017nonanticipative}. Each row presents the performance metric of all methods averaged over the five perturbed instances. If all instances are solved, we show in the column called~$\avtime$, the average time needed to solve the considered instance to optimality. If at least one instance cannot be solved, we show the average optimality gap over the non-solved instances and the number of solved instances in parentheses. In each row of the table, the best-performing method is shown in bold. \rev{For the APM method~$\Part_{\final}$, we also show the average number of iterations $\aviterset$ as well as the average size of the final partition $\avpartsize$, both rounded to the nearest integer.}

\begin{table}[!ht]
    \caption{Computational comparison of a selection of methods for multi-dimensional knapsack problems with continuous variables.}
    \label{table:continuous_results}
    \centerline{
    \begin{tabular}{c*{8}{r}}
        \toprule
        & & & \multicolumn{2}{c}{$\MILP_{\textbigM}$} & \multicolumn{3}{c}{\APM}\\
        \cmidrule(lr){4-5} \cmidrule(lr){6-8}
        & & & \texttt{Song Big-M} & \texttt{Belotti Big-M} & \multicolumn{3}{c}{$\Part_{\final}$}\\
        \cmidrule(lr){4-4} \cmidrule(lr){5-5} \cmidrule(lr){6-8}
        \multicolumn{1}{c}{Instance} & \multicolumn{1}{c}{$\threshold$} & \multicolumn{1}{c}{$\card{\scenarios}$} & \multicolumn{1}{c}{$\avtime$} & \multicolumn{1}{c}{$\avtime$} & \multicolumn{1}{c}{$\avtime$} & \multicolumn{1}{c}{$\aviterset$} & \multicolumn{1}{c}{\rev{$\avpartsize$}}\\
        \midrule
        \csvreader[column count=9, no head, late after line=\\, respect none]
        {tables/csv/ccmknap-10-10-1-result-table.csv}
        {1=\acol,2=\bcol,3=\ccol,4=\d,5=\e,6=\f,7=\g,8=\h}
        {\multirow{8}{*}{\acol} & \multirow{4}{*}{\bcol} & \ccol & \d & \e & \f & \g & \rev{\h}}
        \midrule
        \csvreader[column count=9, no head, late after line=\\, respect none]
        {tables/csv/ccmknap-20-10-1-result-table.csv}
        {1=\acol,2=\bcol,3=\ccol,4=\d,5=\e,6=\f,7=\g,8=\h}
        {\multirow{8}{*}{\acol} & \multirow{4}{*}{\bcol} & \ccol & \d & \e & \f & \g & \rev{\h}}
        \midrule
        \csvreader[column count=9, no head, late after line=\\, respect none]
        {tables/csv/ccmknap-40-30-1-result-table.csv}
        {1=\acol,2=\bcol,3=\ccol,4=\d,5=\e,6=\f,7=\g,8=\h}
        {\multirow{8}{*}{\acol} & \multirow{4}{*}{\bcol} & \ccol & \d & \e & \f & \g & \rev{\h}}
        \bottomrule
    \end{tabular}
    }
\end{table}

\begin{table}[!ht]
    \caption{Computational comparison of a selection of methods for multi-dimensional knapsack problems with binary variables.}
    \label{table:binary_results}
    \centerline{
    \begin{tabular}{c*{8}{r}}
        \toprule
        & & & \multicolumn{2}{c}{$\MILP_{\textbigM}$} & \multicolumn{3}{c}{\APM}\\
        \cmidrule(lr){4-5} \cmidrule(lr){6-8}
        & & & \texttt{Song Big-M} & \texttt{Belotti Big-M} & \multicolumn{3}{c}{$\Part_{\final}$}\\
        \cmidrule(lr){4-4} \cmidrule(lr){5-5} \cmidrule(lr){6-8}
        \multicolumn{1}{c}{Instance} & \multicolumn{1}{c}{$\threshold$} & \multicolumn{1}{c}{$\card{\scenarios}$} & \multicolumn{1}{c}{$\avtime$} & \multicolumn{1}{c}{$\avtime$} & \multicolumn{1}{c}{$\avtime$} & \multicolumn{1}{c}{$\aviterset$} & \multicolumn{1}{c}{\rev{$\avpartsize$}}\\
        \midrule
        \csvreader[column count=9, no head, late after line=\\, respect none]
        {tables/csv/ccmknap-10-10-0-result-table.csv}
        {1=\acol,2=\bcol,3=\ccol,4=\d,5=\e,6=\f,7=\g,8=\h}
        {\multirow{8}{*}{\acol} & \multirow{4}{*}{\bcol} & \ccol & \d & \e & \f & \g & \rev{\h}}
        \midrule
        \csvreader[column count=9, no head, late after line=\\, respect none]
        {tables/csv/ccmknap-20-10-0-result-table.csv}
        {1=\acol,2=\bcol,3=\ccol,4=\d,5=\e,6=\f,7=\g,8=\h}
        {\multirow{8}{*}{\acol} & \multirow{4}{*}{\bcol} & \ccol & \d & \e & \f & \g & \rev{\h}}
        \midrule
        \csvreader[column count=9, no head, late after line=\\, respect none]
        {tables/csv/ccmknap-40-30-0-result-table.csv}
        {1=\acol,2=\bcol,3=\ccol,4=\d,5=\e,6=\f,7=\g,8=\h}
        {\multirow{8}{*}{\acol} & \multirow{4}{*}{\bcol} & \ccol & \d & \e & \f & \g & \rev{\h}}
        \bottomrule
    \end{tabular}
    }
\end{table}

As can be observed in Table~\ref{table:continuous_results}, the method~$\Part_{\final}$ performs the best among the pool of compared methods when continuous variables are considered. More specifically,~$\Part_{\final}$ is the best-performing method for 20 out of the 24 considered instances. For the remaining 4 instances the method \texttt{Song Big-M} performs best. Thanks to the extremely tight big-M coefficients that are produced with this method, the time needed to solve Model~\eqref{eq:cclp-reform-bigm} and the final optimality gaps are always smaller compared to the \texttt{Belotti Big-M} method.

The results in Table~\ref{table:binary_results} suggest that there is no dominating algorithm when binary variables are considered. In particular, the method \texttt{Song Big-M} makes it possible to solve most of the instances that have a large number of scenarios to optimality when binary variables are considered. Nevertheless, the method \texttt{Song Big-M} does not produce valid results when the time limit is reached for instance~\textsf{mk-40-30} with~$\card{\scenarios} \geq 3000$. As explained earlier, due to the large amount of constraints and scenarios, too much time is consumed for computing the big-M coefficients. For these same instances the method~$\Part_{\final}$ significantly reduces the optimality gap compared to the \texttt{Belotti Big-M} benchmark. We also observe that, for instance~\textsf{mk-10-10}, only a few iterations are needed to solve the problem to optimality with method~$\Part_{\final}$. This demonstrates the strength of the partition presented in Section~\ref{sec:init}. In fact, the optimal solution of the original CCSP is, in some cases, found directly using the first partition.

Finally, we draw attention to the fact that when the number of scenarios is large,~$\Part_{\final}$ consistently performs better than other methods for both continuous and binary variables. This is the strength of the \APM. By design, it reduces the amount of binary variables that are considered and, as a consequence, scales better with the size of the scenario set.

\subsection{Partitioning Strategies}
\rev{We now study the contribution of the partitioning strategies introduced in Section~\ref{sec:strong-part}. We introduce three parameterizations of the \APM that follow the structure of Algorithm~\ref{alg:adaptive-partitioning} but have different strategies for initializing and refining the partition. We also investigate the benefit of merging.}

\rev{The first method, denoted by $\Part_{\random}$, is a naive parameterization of the APM. It is based on restricting Algorithm~\ref{alg:adaptive-partitioning} to refinement operations. Each subset of the partition to be refined is chosen at random. The dispatch of scenarios to child subsets is also chosen at random. Nevertheless, the outline of Algorithm~\ref{alg:refinement}, \ie, how many scenarios are selected for refinement and which scenarios are considered for refinement, is respected. The initial partition of $\Part_{\random}$ is obtained by randomly dispatching the scenarios while keeping their size balanced. The second method, denoted by $\Part_{\init}$, is based on the result of Section~\ref{sec:init} for obtaining an initial partition but otherwise uses the same implementation as $\Part_{\random}$. The third method, denoted by $\Part_{\violmerge}$, splits the subset that contains the most violated scenario and allocates scenarios to $\newrefinedleft$ and $\newrefinedright$ in decreasing order of their violation. That is, the most violated scenario is allocated to $\newrefinedleft$, the second-most violated to the $\newrefinedright$, and so on. Further, this last method is allowed to merge subsets.}

\rev{These methods are run on the same instances as in the previous experiments, but restricted to $\tau = 0.2$ since they are the most challenging instances according to the previous experiments. The experiment results are shown in Table~\ref{table:continuous_ablation} and Table~\ref{table:binary_ablation} for continuous and binary variables, respectively.

Although the method $\Part_{\random}$ is guaranteed to terminate finitely, this method is less competitive compared to the remaining parameterizations. Still, it is interesting to note that this naive method yields lower optimality gaps than the \texttt{Belotti Big-M} benchmark when the number of scenarios is large.

The results show that the initial partition described in Section~\ref{sec:init} provides significant improvement in terms of solution time and optimality gap as can be observed when comparing $\Part_\init$ and $\Part_\random$. However, we do not observe a significant difference between the results of $\Part_\init$ and $\Part_\violmerge$. This is surprising since $\Part_\violmerge$ has a more elaborate refinement strategy and is allowed to merge. Yet, we observe that $\Part_\violmerge$ does, in fact, perform very few merge operations as the number of iterations and the partition size are very close to the ones of $\Part_\init$.

However, our final algorithm $\Part_\final$ performs significantly more iterations and tends to have smaller final partitions than all other partitioning algorithms. This suggests that the refinement strategies that promote merging, introduced in Section~\ref{sec:accurate-split}, are beneficial. A key insight of this experiment is that merging alone is not sufficient to keep the resulting partitions small: the refinement strategy needs to be aligned with the merging strategy to keep the considered partitions small.}

\begin{table}[!ht]
    \caption{Sensitivity analysis of APM for multi-dimensional knapsack problems with continuous variables.}
    \label{table:continuous_ablation}
    \centerline{
    \footnotesize
    \begin{tabular}{c*{14}{r}}
        \toprule
        & & \multicolumn{3}{c}{$\Part_{\random}$} & \multicolumn{3}{c}{$\Part_{\init}$} & \multicolumn{3}{c}{$\Part_{\violmerge}$} & \multicolumn{3}{c}{$\Part_{\final}$}\\
        \cmidrule(lr){3-5} \cmidrule(lr){6-8} \cmidrule(lr){9-11} \cmidrule(lr){12-14}
        \multicolumn{1}{c}{Instance} & \multicolumn{1}{c}{$\card{\scenarios}$} & \multicolumn{1}{c}{$\avtime$} & \multicolumn{1}{c}{$\aviterset$} & \multicolumn{1}{c}{$\avpartsize$} & \multicolumn{1}{c}{$\avtime$} & \multicolumn{1}{c}{$\aviterset$} & \multicolumn{1}{c}{$\avpartsize$} & \multicolumn{1}{c}{$\avtime$} & \multicolumn{1}{c}{$\aviterset$} & \multicolumn{1}{c}{$\avpartsize$} & \multicolumn{1}{c}{$\avtime$} & \multicolumn{1}{c}{$\aviterset$} & \multicolumn{1}{c}{$\avpartsize$}\\
        \midrule
        \csvreader[column count=17, no head, late after line=\\, respect none]
        {tables/csv/sens-ccmknap-10-10-1-result-table.csv}{1=\acol,2=\bcol,3=\ccol,4=\dcol,5=\ecol,6=\fcol,7=\gcol,8=\hcol,9=\icol,10=\jcol,11=\kcol,12=\lcol,13=\mcol,14=\ncol}
        {\multirow{4}{*}{\acol} & \bcol & \ccol & \dcol & \ecol & \fcol & \gcol & \hcol & \icol & \jcol & \kcol & \lcol & \mcol & \ncol}
        \midrule
        \csvreader[column count=17, no head, late after line=\\, respect none]
        {tables/csv/sens-ccmknap-20-10-1-result-table.csv}{1=\acol,2=\bcol,3=\ccol,4=\dcol,5=\ecol,6=\fcol,7=\gcol,8=\hcol,9=\icol,10=\jcol,11=\kcol,12=\lcol,13=\mcol,14=\ncol}
        {\multirow{4}{*}{\acol} & \bcol & \ccol & \dcol & \ecol & \fcol & \gcol & \hcol & \icol & \jcol & \kcol & \lcol & \mcol & \ncol}
        \midrule
        \csvreader[column count=17, no head, late after line=\\, respect none]
        {tables/csv/sens-ccmknap-40-30-1-result-table.csv}{1=\acol,2=\bcol,3=\ccol,4=\dcol,5=\ecol,6=\fcol,7=\gcol,8=\hcol,9=\icol,10=\jcol,11=\kcol,12=\lcol,13=\mcol,14=\ncol}
        {\multirow{4}{*}{\acol} & \bcol & \ccol & \dcol & \ecol & \fcol & \gcol & \hcol & \icol & \jcol & \kcol & \lcol & \mcol & \ncol}
        \bottomrule
    \end{tabular}
    }
\end{table}

\begin{table}[!ht]
    \caption{Sensitivity analysis of APM for multi-dimensional knapsack problems with binary variables.}
    \label{table:binary_ablation}
    \centerline{
    \footnotesize
    \begin{tabular}{c*{14}{r}}
        \toprule
        & & \multicolumn{3}{c}{$\Part_{\random}$} & \multicolumn{3}{c}{$\Part_{\init}$} & \multicolumn{3}{c}{$\Part_{\violmerge}$} & \multicolumn{3}{c}{$\Part_{\final}$}\\
        \cmidrule(lr){3-5} \cmidrule(lr){6-8} \cmidrule(lr){9-11} \cmidrule(lr){12-14}
        \multicolumn{1}{c}{Instance} & \multicolumn{1}{c}{$\card{\scenarios}$} & \multicolumn{1}{c}{$\avtime$} & \multicolumn{1}{c}{$\aviterset$} & \multicolumn{1}{c}{$\avpartsize$} & \multicolumn{1}{c}{$\avtime$} & \multicolumn{1}{c}{$\aviterset$} & \multicolumn{1}{c}{$\avpartsize$} & \multicolumn{1}{c}{$\avtime$} & \multicolumn{1}{c}{$\aviterset$} & \multicolumn{1}{c}{$\avpartsize$} & \multicolumn{1}{c}{$\avtime$} & \multicolumn{1}{c}{$\aviterset$} & \multicolumn{1}{c}{$\avpartsize$}\\
        \midrule
        \csvreader[column count=17, no head, late after line=\\, respect none]
        {tables/csv/sens-ccmknap-10-10-0-result-table.csv}{1=\acol,2=\bcol,3=\ccol,4=\dcol,5=\ecol,6=\fcol,7=\gcol,8=\hcol,9=\icol,10=\jcol,11=\kcol,12=\lcol,13=\mcol,14=\ncol}
        {\multirow{4}{*}{\acol} & \bcol & \ccol & \dcol & \ecol & \fcol & \gcol & \hcol & \icol & \jcol & \kcol & \lcol & \mcol & \ncol}
        \midrule
        \csvreader[column count=17, no head, late after line=\\, respect none]
        {tables/csv/sens-ccmknap-20-10-0-result-table.csv}{1=\acol,2=\bcol,3=\ccol,4=\dcol,5=\ecol,6=\fcol,7=\gcol,8=\hcol,9=\icol,10=\jcol,11=\kcol,12=\lcol,13=\mcol,14=\ncol}
        {\multirow{4}{*}{\acol} & \bcol & \ccol & \dcol & \ecol & \fcol & \gcol & \hcol & \icol & \jcol & \kcol & \lcol & \mcol & \ncol}
        \midrule
        \csvreader[column count=17, no head, late after line=\\, respect none]
        {tables/csv/sens-ccmknap-40-30-0-result-table.csv}{1=\acol,2=\bcol,3=\ccol,4=\dcol,5=\ecol,6=\fcol,7=\gcol,8=\hcol,9=\icol,10=\jcol,11=\kcol,12=\lcol,13=\mcol,14=\ncol}
        {\multirow{4}{*}{\acol} & \bcol & \ccol & \dcol & \ecol & \fcol & \gcol & \hcol & \icol & \jcol & \kcol & \lcol & \mcol & \ncol}
        \bottomrule
    \end{tabular}
    }
\end{table}

\begin{remark}
  \rev{We ran experiments with a variation of $\Part_\violmerge$ without merge operations. We observed no significant difference in terms of solution time, optimality gap, number of iterations, and partition size compared to $\Part_\violmerge$ and $\Part_\init$. We do not include these results to keep this section concise.}
\end{remark}

\revtwo{\begin{remark}
  We provide a detailed analysis of what happens when the number of iterations is increased beyond the minimum value~$\extrasize$ in Algorithm~\ref{alg:refinement}. In that case, we observe a faster termination of the \APM with a larger final partition. This study is available in Appendix~\ref{sec:trade-betw-part}.
\end{remark}}

\subsection{Detailed Analysis}
To study the behavior of~$\Part_{\final}$ and identify the strengths and weaknesses of this method, we provide a detailed analysis on a single instance:~\textsf{mk-20-10} with~$\abs{\scenarios} = 1000$ scenarios.

\subsubsection{Partition Size}
The number of subsets that compose the partition as a function of the number of iterations is shown in Figure~\ref{fig:plot_part_size}. When applying method~$\Part_{\final}$, \rev{the partitions first stay of minimal size for many iterations and then increase steadily with the iterations. In fact, Figure~\ref{fig:plot_part_size} suggests that $\Part_{\final}$ follows two phases: a first phase in which many merging operations are performed, and a second phase in which few or no merging operations are performed. These phases are not hard-coded but emerge naturally from the design of~$\Part_{\final}$.}

\rev{In the first phase,} merge operations are performed at each iteration, which forces the partition to stay as small as possible. In this phase, the reduced-size model is solved very efficiently. Indeed, when the size of the partition is minimal, \ie, it is equal to~$\lfloor \threshold \abs{\scenarios} \rfloor + 1$, the chance constraint~\eqref{eq:chance-cons-part} reduces to~$\sum_{\partition\in\partitions} \indvar_\partition \geq 1$. Hence, the optimal solution of the reduced model is given by the point that corresponds to~$\min_{\scenario \in \scenarios} \singlecost_\scenario$.

\rev{The second phase begins when the size of the partition starts growing. During these remaining iterations,} fewer merging operations are carried out. As a result, the size of the partition increases steadily, and solving the reduced-size model requires more time in each iteration. Overall, more iterations are carried out when continuous variables are considered compared to when binary variables are considered in the first phase of~$\Part_{\final}$, as could already be observed in Tables~\ref{table:continuous_results}~and~\ref{table:binary_results}. Moreover, we observe that each time a refinement operation is carried out for an instance with continuous variables, exactly one additional subset is created. In contrast, in the binary case, the minimal-size refinement tends to create much more than one additional subset. This can be explained by the fact that, due to the integrality restrictions on the decision variables, the optimal solutions satisfy more sets~$\probset^\partition$. The fact that partitions that necessitate \SI{75}{\percent} less binary variables can yield optimal solutions to the original CCSP demonstrates the value of adaptive methods for CCSPs.

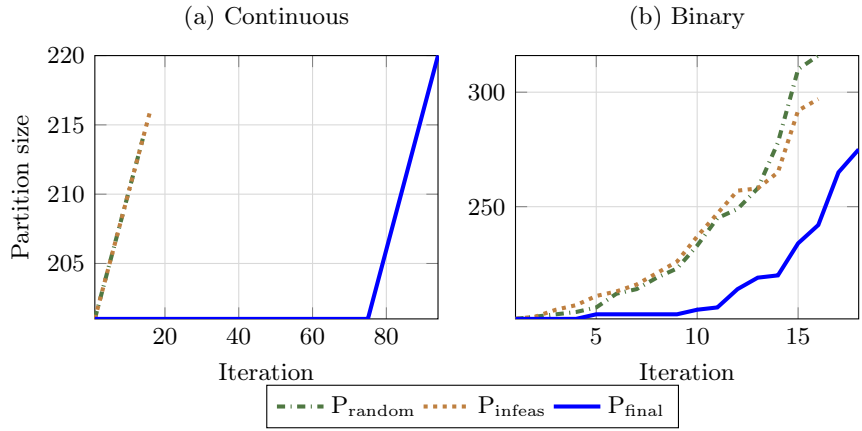
\begin{figure}[ht!]
    \centering
    \resizebox{0.9\linewidth}{!}{\begin{tikzpicture}
\begin{groupplot}[
    group style={
        group name=my plots,
        group size=2 by 1,
        xlabels at=edge bottom,
        ylabels at=edge left},
    height = 5cm,
    width  = 6cm,
    enlarge x limits = 0,
    enlarge y limits = 0,
    xlabel = {Iteration},
    ylabel = {Partition size},
    xmin = 1,
    ]

    \nextgroupplot[title = {(a) Continuous},
                   font = \small]
    \addplot+[randomStyle] table [x index = {0}, y index = {5}, col sep=comma]{plots/csv/ccmknap-20-10-1000-2-20-1-3-iter.csv};
    \addplot+[infeastyle] table [x index = {0}, y index = {5}, col sep=comma]{plots/csv/ccmknap-20-10-1000-2-20-1-7-iter.csv};
    \addplot+[finalStyle] table [x index = {0}, y index = {5}, col sep=comma]{plots/csv/ccmknap-20-10-1000-2-20-1-4-iter.csv};

    \nextgroupplot[title = {(b) Binary},
                   font = \small,
                   legend style={at={(-0.125,-0.25)}, anchor=north},
                   legend columns=3]
    \addplot+[randomStyle] table [x index = {0}, y index = {5}, col sep=comma]{plots/csv/ccmknap-20-10-1000-2-20-0-3-iter.csv};
    \addlegendentry{$\Part_{\random}$}
    \addplot+[infeastyle] table [x index = {0}, y index = {5}, col sep=comma]{plots/csv/ccmknap-20-10-1000-2-20-0-7-iter.csv};
    \addlegendentry{$\Part_{\violmerge}$}
    \addplot+[finalStyle] table [x index = {0}, y index = {5}, col sep=comma]{plots/csv/ccmknap-20-10-1000-2-20-0-4-iter.csv};
    \addlegendentry{$\Part_{\final}$}

\end{groupplot}
\end{tikzpicture}}
    \caption{Partition size over iterations for \textsf{mk-20-10} with $1000$ scenarios with continuous and binary variables. \rev{The methods $P_\random$ and $P_\violmerge$ are indistinguishable in the (a)~continuous case.}}
    \label{fig:plot_part_size}
\end{figure}

\subsubsection{Bound Evolution with Refinement and Merging}

In Figures~\ref{fig:bound_evol_cont} and~\ref{fig:bound_evol_bin}, we examine how the lower and upper bounds, along with the iteration count, evolve over time. Figure~\ref{fig:bound_evol_cont}~(a) shows how the method~$\Part_{\final}$ achieves a computational advantage over the other methods when continuous variables are considered: it quickly finds a tight lower bound. This is explained by the large number of iterations performed in the first phase of the final \APM when the partition size stays minimal. Figure~\ref{fig:bound_evol_cont}~(a) shows that the first phase lasts around \SI{450}{\second}. Once the second phase is reached, the partition size increases, and more time is required to solve each optimization problem. This can be observed in Figure~\ref{fig:bound_evol_cont}~(b) as the number of iterations performed over time decreases after \SI{450}{\second}. Figure~\ref{fig:bound_evol_bin}~(a) also highlights the computational advantage of the method \texttt{Song Big-M} when binary variables are considered. Thanks to its very tight big-M coefficients, this method quickly produces a feasible solution and can close the optimality gap in a short amount of time. On the other hand,~$\Part_{\final}$ stays in the first phase for several iterations, resulting in a longer computation time.

\begin{figure}[ht!]
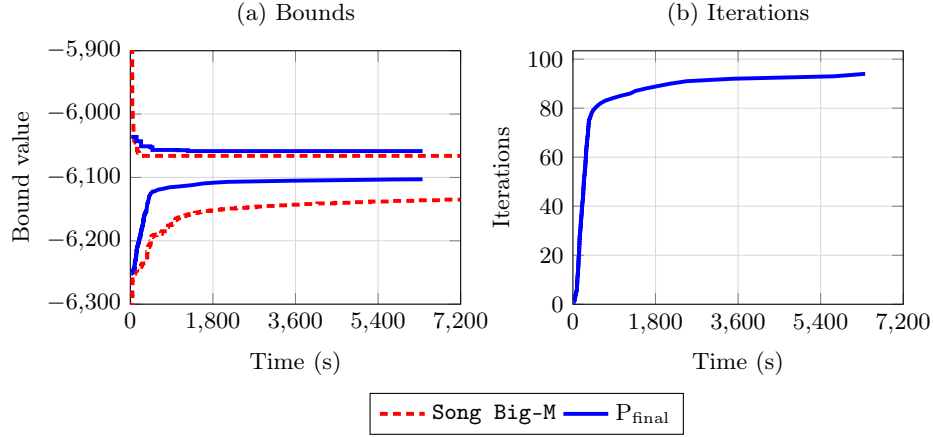

    \centering
    \resizebox{\linewidth}{!}{
        \plotresultsupperlower{plots/csv/ccmknap-20-10-1000-2-20-1-1-iter.csv}{plots/csv/ccmknap-20-10-1000-2-20-1-4-iter.csv}{7200}}
    \caption{Convergence plot and number of iterations performed over time on \textsf{mk-20-10} with $1000$ scenarios and continuous variables.}
    \label{fig:bound_evol_cont}
\end{figure}

\begin{figure}[ht!]
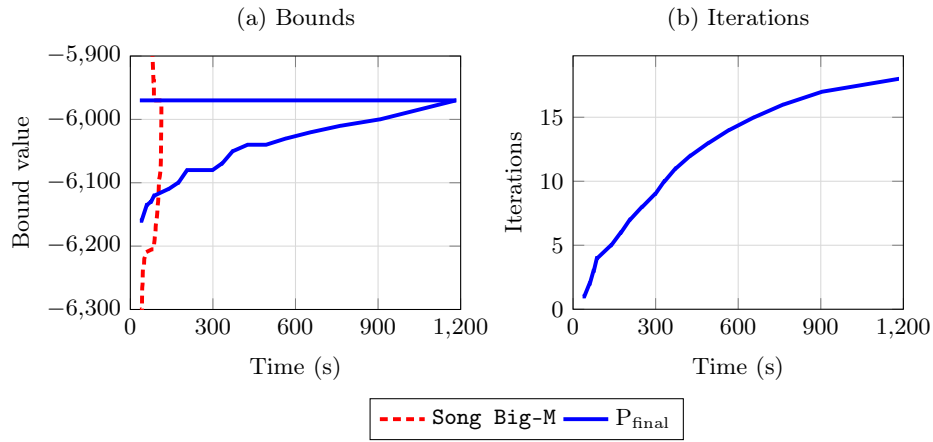

    \centering
    \resizebox{\linewidth}{!}{
        \plotresultsupperlower{plots/csv/ccmknap-20-10-1000-2-20-0-1-iter.csv}{plots/csv/ccmknap-20-10-1000-2-20-0-4-iter.csv}{1200}}
    \caption{Convergence plot and number of iterations performed over time on \textsf{mk-20-10} with $1000$ scenarios and binary variables. \rev{The method \texttt{Song Big-M} closes the gap in 112 seconds}}
    \label{fig:bound_evol_bin}
\end{figure}


\section{Conclusion}
\label{sec:conclusion}

This paper introduces a method for solving chance-constrained problems with finite support based on iteratively partitioning the scenario set and solving reduced models. The method provides valid \rev{lower} bounds on the original stochastic problem and is guaranteed to recover its optimal solution in a finite number of iterations. The key idea of the proposed method is to modify the partition by excluding previously found solutions. We use mathematical arguments to find modifications of the partition that ensure a strict increase of the lower bounds while keeping the size of the reduced model as small as possible.

The proposed method surpasses state-of-the-art benchmarks on standard instances with continuous variables and performs comparably on instances with binary variables. Its scalability improves notably when solving instances with a large number of scenarios and constraints.

Future research avenues include extensions for non-equiprobable scenarios, adapting the partitioning method for constraint aggregations within subsets, or leveraging partitions to enhance techniques that exploit the quantile bound. \rev{More generally, new methods to modify partitions might be investigated that do not rely on refining and merging. Another promising direction is whether bound improvements through refinement and merging can be performed within a single branch-and-bound tree. This removes the need to solve a new problem from scratch at each iteration and makes it possible to reuse computational information from previous iterations. Finally, a promising direction to improve the understanding of CCSPs is studying the existence of small completely sufficient partitions.}


\section*{Acknowledgments}
\label{sec:acknowledgements}
The authors want to thank Youssouf Emine for his help with the highly optimized implementation of a benchmark method, as well as Martine Labb\'{e} for her helpful comments on an earlier version of this manuscript. The support of IVADO, the Canada First Research Excellence Fund (Apog\'ee/CFREF), as well as the computational infrastructure provided by Compute Canada are also gratefully acknowledged.


\printbibliography

\appendix

\section{Beyond the Minimum Number of Splits}
\label{sec:trade-betw-part}
\revtwo{In this appendix, we experiment with a new variant of \APM denoted by $\Part_\beta$. This variant performs more than the minimum number of refinement operations~$\extrasize$ required at each iteration. The parameter $\beta$ represents the relative increase in the number of splits compared to the minimum value. In other words,~$\Part_\beta$ creates~$\mu_\beta = \mu \cdot (1 + \beta / 100)$ new subsets in each iteration. We consider~$\beta \in \{0.5, 1.0, 2.0\}$.

Recall that our refinement procedure splits subsets that are in the set~$\refined_2 (x) = \{\newrefined \in \refined: \card{\newrefined \cap \infscenarios(\lbsol)} \geq 2\}$ and contain at least two infeasible scenarios. In practice,~$\card{\refined_2 (x)}$ may be smaller than~$\mu_\beta$. Hence, we perform exactly $\min (\extrasize_\beta, \card{\refined_2 (x)})$ splits. Apart from the number of splits, the algorithm is kept the same as $\Part_\final$.}
\begin{table}[!ht]
    \caption{\revtwo{Results of $\Part_\beta$ for multi-dimensional knapsack problems} \revtwo{with continuous variables.}}
    \label{table:continuous_splits}
    \centerline{
    \scriptsize
    \revtwo{\begin{tabular}{c*{20}{r}}
        \toprule
        & & \multicolumn{3}{c}{$\Part_\final$} & \multicolumn{3}{c}{$\beta = 50\%$} & \multicolumn{3}{c}{$\beta = 100\%$} & \multicolumn{3}{c}{$\beta = 200\%$}\\
        \cmidrule(lr){3-5} \cmidrule(lr){6-8} \cmidrule(lr){9-11} \cmidrule(lr){12-14}
        \multicolumn{1}{c}{Instance} & \multicolumn{1}{c}{$\card{\scenarios}$} & \multicolumn{1}{c}{$\avtime$} & \multicolumn{1}{c}{$\aviterset$} & \multicolumn{1}{c}{$\avpartsize$} & \multicolumn{1}{c}{$\avtime$} & \multicolumn{1}{c}{$\aviterset$} & \multicolumn{1}{c}{$\avpartsize$} & \multicolumn{1}{c}{$\avtime$} & \multicolumn{1}{c}{$\aviterset$} & \multicolumn{1}{c}{$\avpartsize$} & \multicolumn{1}{c}{$\avtime$} & \multicolumn{1}{c}{$\aviterset$} & \multicolumn{1}{c}{$\avpartsize$}\\
        \midrule
        \csvreader[column count=20, no head, late after line=\\, respect none]
        {tables/csv/split-ccmknap-10-10-1-result-table.csv}{1=\acol,2=\bcol,3=\ccol,4=\dcol,5=\ecol,6=\fcol,7=\gcol,8=\hcol,9=\icol,10=\jcol,11=\kcol,12=\lcol,13=\mcol,14=\ncol,15=\ocol,16=\pcol,17=\qcol,18=\rcol,19=\scol,20=\tcol}
        {\multirow{4}{*}{\acol} & \bcol & \ccol & \dcol & \ecol & \fcol & \gcol & \hcol & \icol & \jcol & \kcol & \lcol & \mcol & \ncol}
        \midrule
        \csvreader[column count=20, no head, late after line=\\, respect none]
        {tables/csv/split-ccmknap-20-10-1-result-table.csv}{1=\acol,2=\bcol,3=\ccol,4=\dcol,5=\ecol,6=\fcol,7=\gcol,8=\hcol,9=\icol,10=\jcol,11=\kcol,12=\lcol,13=\mcol,14=\ncol,15=\ocol,16=\pcol,17=\qcol,18=\rcol,19=\scol,20=\tcol}
        {\multirow{4}{*}{\acol} & \bcol & \ccol & \dcol & \ecol & \fcol & \gcol & \hcol & \icol & \jcol & \kcol & \lcol & \mcol & \ncol}
        \midrule
        \csvreader[column count=20, no head, late after line=\\, respect none]
        {tables/csv/split-ccmknap-40-30-1-result-table.csv}{1=\acol,2=\bcol,3=\ccol,4=\dcol,5=\ecol,6=\fcol,7=\gcol,8=\hcol,9=\icol,10=\jcol,11=\kcol,12=\lcol,13=\mcol,14=\ncol,15=\ocol,16=\pcol,17=\qcol,18=\rcol,19=\scol,20=\tcol}
        {\multirow{4}{*}{\acol} & \bcol & \ccol & \dcol & \ecol & \fcol & \gcol & \hcol & \icol & \jcol & \kcol & \lcol & \mcol & \ncol}
        \bottomrule
    \end{tabular}}
    }
  \end{table}


\begin{table}[!ht]
    \caption{\revtwo{Results of $\Part_\beta$ for multi-dimensional knapsack problems} \revtwo{with binary variables.}}
    \label{table:binary_splits}
    \centerline{
      \scriptsize
      \revtwo{\begin{tabular}{c*{20}{r}}
        \toprule
        & & \multicolumn{3}{c}{$\Part_\final$} & \multicolumn{3}{c}{$\beta = 50\%$} & \multicolumn{3}{c}{$\beta = 100\%$} & \multicolumn{3}{c}{$\beta = 200\%$}\\
        \cmidrule(lr){3-5} \cmidrule(lr){6-8} \cmidrule(lr){9-11} \cmidrule(lr){12-14}
        \multicolumn{1}{c}{Instance} & \multicolumn{1}{c}{$\card{\scenarios}$} & \multicolumn{1}{c}{$\avtime$} & \multicolumn{1}{c}{$\aviterset$} & \multicolumn{1}{c}{$\avpartsize$} & \multicolumn{1}{c}{$\avtime$} & \multicolumn{1}{c}{$\aviterset$} & \multicolumn{1}{c}{$\avpartsize$} & \multicolumn{1}{c}{$\avtime$} & \multicolumn{1}{c}{$\aviterset$} & \multicolumn{1}{c}{$\avpartsize$} & \multicolumn{1}{c}{$\avtime$} & \multicolumn{1}{c}{$\aviterset$} & \multicolumn{1}{c}{$\avpartsize$}\\
        \midrule
        \csvreader[column count=20, no head, late after line=\\, respect none]
        {tables/csv/split-ccmknap-10-10-0-result-table.csv}{1=\acol,2=\bcol,3=\ccol,4=\dcol,5=\ecol,6=\fcol,7=\gcol,8=\hcol,9=\icol,10=\jcol,11=\kcol,12=\lcol,13=\mcol,14=\ncol,15=\ocol,16=\pcol,17=\qcol,18=\rcol,19=\scol,20=\tcol}
        {\multirow{4}{*}{\acol} & \bcol & \ccol & \dcol & \ecol & \fcol & \gcol & \hcol & \icol & \jcol & \kcol & \lcol & \mcol & \ncol}
        \midrule
        \csvreader[column count=20, no head, late after line=\\, respect none]
        {tables/csv/split-ccmknap-20-10-0-result-table.csv}{1=\acol,2=\bcol,3=\ccol,4=\dcol,5=\ecol,6=\fcol,7=\gcol,8=\hcol,9=\icol,10=\jcol,11=\kcol,12=\lcol,13=\mcol,14=\ncol,15=\ocol,16=\pcol,17=\qcol,18=\rcol,19=\scol,20=\tcol}
        {\multirow{4}{*}{\acol} & \bcol & \ccol & \dcol & \ecol & \fcol & \gcol & \hcol & \icol & \jcol & \kcol & \lcol & \mcol & \ncol}
        \midrule
        \csvreader[column count=20, no head, late after line=\\, respect none]
        {tables/csv/split-ccmknap-40-30-0-result-table.csv}{1=\acol,2=\bcol,3=\ccol,4=\dcol,5=\ecol,6=\fcol,7=\gcol,8=\hcol,9=\icol,10=\jcol,11=\kcol,12=\lcol,13=\mcol,14=\ncol,15=\ocol,16=\pcol,17=\qcol,18=\rcol,19=\scol,20=\tcol}
        {\multirow{4}{*}{\acol} & \bcol & \ccol & \dcol & \ecol & \fcol & \gcol & \hcol & \icol & \jcol & \kcol & \lcol & \mcol & \ncol}
        \bottomrule
      \end{tabular}}
    }
  \end{table}


\revtwo{Tables~\ref{table:continuous_splits} and \ref{table:binary_splits} display the results for continuous and binary variables, respectively. In both cases, we fix~$\tau = 0.2$ since this setting is the most challenging according to our previous results. The results show that~$\Part_\beta$ performs better on instances that are solved by~$\Part_\final$ before the time limit is reached. These instances have few variables, constraints, and scenarios. In the continuous case, this occurs for larger values of~$\beta$. In the binary case, this occurs for smaller values of~$\beta$. On the other hand,~$\Part_\final$ outperforms~$\Part_\beta$ on large instances. This is because~$\Part_\beta$ performs fewer merge operations compared to~$\Part_\final$, regardless of the value of~$\beta$. When more merge operations are carried out the underlying CCSP stays tractable for a larger amount of iterations. In summary, we recommend using~$\beta = \SI{200}{\percent}$ on smaller instances and~$\beta = \SI{100}{\percent}$, \ie, applying $\Part_\final$, on larger instances.}


\end{document}
